 \newtheorem{thm}{Theorem}[section]
 \newtheorem{cor}[thm]{Corollary}
 \newtheorem{lem}[thm]{Lemma}
 \newtheorem{prop}[thm]{Proposition}
 \newtheorem{obs}[thm]{Observation}
 \theoremstyle{definition}
 \newtheorem{defn}[thm]{Definition}
 \newtheorem{exmp}[thm]{Example}
\numberwithin{equation}{section}
 \newcommand{\set}[1]{\left\{#1\right\}}
 \newcommand{\Set}[2]{\set{#1\ \vert\ #2}}
\begin{document}

\title{On some subclasses of circular-arc catch digraphs}
\author{Sanchita Paul\thanks{Department of Mathematics, Jadavpur University, Kolkata - 700 032, India. sanchitajumath@gmail.com}\thanks{(Corresponding author)},
Sayan Gupta \thanks{Department of Mathematics, Jadavpur University, Kolkata - 700 032, India.   
sayangupta4u@gmail.com} and
Shamik Ghosh\thanks{Department of Mathematics, Jadavpur University, Kolkata - 700 032, India. ghoshshamik@yahoo.com}}

\date{}

\maketitle

\begin{abstract}
\noindent
{\footnotesize {\em Catch digraphs} was introduced by Hiroshi Maehara in 1984 as an analog of intersection graphs where a family of pointed sets represents a digraph. 
After that Prisner continued his research particularly on {\em interval catch digraphs} by characterizing them diasteroidal triple free.
It has numerous applications in the field of real world problems like network technology and telecommunication operations. Recently, we characterized three important subclasses of interval catch digraphs. In this article we introduce a new class of catch digraphs, namely {\em circular-arc catch digraphs}. The definition is same as interval catch digraph, only the intervals are replaced by circular-arcs here. We present the characterization of {\em proper circular-arc catch digraphs}, which is a natural subclass of circular-arc catch digraphs where no circular-arc is contained in other properly. For this we introduce a concept, namely {\em monotone circular ordering} for the vertices of the augmented adjacency matrix of it. Next we find that underlying graph of a {\em proper oriented circular-arc catch digraph} is a proper circular-arc graph. Also we characterize proper oriented circular-arc catch digraphs by defining a certain kind of circular vertex ordering of its vertices. Another interesting result is to characterize {\em oriented circular-arc catch digraphs} which are tournaments in terms of forbidden subdigraphs. Further we study some properties of an oriented circular-arc catch digraph. In conclusion we discuss the relations between these subclasses of circular-arc catch digraphs.}
\end{abstract}

\noindent
{\scriptsize Keywords:} {\footnotesize interval graph; circular-arc graph; proper circular-arc graph; interval catch digraph; oriented graph; tournament}
  
\noindent
{\scriptsize 2010 Mathematical Subject Classification:} {\footnotesize Primary:
05C20, 05C62, 05C75}

\section{Introduction}
Intersection graphs have many significant uses in real world situational problems. Owing to its numerous applications in the field of network science various types of intersection graphs were invented to model different types of geometric objects. Most popular among them is the class of interval graphs. In 1984 Mahera  \cite{Maehara} proposed a similar concept for digraphs, known as {\em catch digraphs}. A {\em catch digraph} of $\mathcal{F}$ is a digraph $G=(V,E)$ in which $uv\in E$ if and only if $u\neq v$ and $p_{v}\in S_{u}$ where $\mathcal{F}=\Set{(S_{u},p_{u})}{u\in V}$ is a family of pointed sets (a set with a point within it) in a Euclidean space. The digraph $G$ is said to be {\em represented} by $\mathcal{F}$. Eventually it was noticed that the digraphs which can be represented by families of pointed intervals can easily be characterized.  
Continuation to this study {\em interval catch digraphs} (in brief, ICD) was introduced by Prisner \cite{Prisner} in 1989 where $S_{u}$ is represented by an interval $I_{u}$ in the real line.  Analogous to the concept of asteroidal triples \cite{Lekker}, he introduced {\em diasteroidal triples} among digraphs and characterized interval catch digraphs by proving them diasteroidal triple free. He also described a special vertex ordering for vertices of the graph in which the augmented adjacency matrix satisfies the consecutive $1$'s property for rows. In continuation to this literature recently we studied and characterized three important subclasses of interval catch digraphs, namely, central interval catch digraphs, oriented interval catch digraphs and proper interval catch digraphs \cite{PG} and eventually disproved a conjecture of Maehara stated in \cite{Maehara}.

\vspace{0.7em}

\noindent {\em Circular-arc graphs} are another important variation of intersection graphs of arcs around a circle. This class of graphs generalizes interval graphs where each vertex is mapped to a circular-arc around a circle so that any two vertices are adjacent if and only if their corresponding circular-arcs intersect. In 1971, Tucker \cite{Tucker} first characterized circular-arc graphs by introducing {\em quasi-circular $1$'s property} of the augmented adjacency matrix associated to it. In 2003 McConnell \cite{Mc} recognized circular-arc graphs in linear time. There are several structural characterizations and recognitions has already been done for circular-arc graphs and many of its subclasses. Among them proper circular-arc graphs, unit circular-arc graphs, helly circular-arc graphs and co-bipartite circular-arc graphs are few to mention, which are all summarised in a recent survey paper by M.C.Lin and J.L.Szwarcfiter \cite{Lin}. Circular-arc graphs have obtained considerable attention for its beautiful structural properties and diverse applications in areas such as genetics, traffic control and many others. To know more about circular-arc graphs one may see books by Golumbic \cite{G} and  by Roberts \cite{Roberts} respectively. Also see \cite{Con, Hubert, Robert2, Stahl, Stou, Ste, Tuck, Tuc, Tu, T} for applications. Among digraphs, the {\em oriented graphs} are the ones that contain no directed cycle of length $2$, that is for every pair of vertices $u,v$, at most one of $u\rightarrow v$ ($uv\in E$) or $v\rightarrow u$ ($vu\in E$) may occur in an oriented graph $G=(V,E)$. Although characterization, polynomial time recognition and many other combinatorial problems \cite{ Prisner1} of interval catch digraphs are already done many years ago, but no significant combinatorial result such as forbidden subdigraph characterization, have been found so far for oriented catch digraphs.


\vspace{0.7em}
\noindent In this article we introduce a new class of catch digraphs, namely {\em circular-arc catch digraphs} (cf. Definition \ref{cac}) where $S_{u}$ is represented by circular-arc  $I_{u}$ around circle. We first characterize circular-arc catch digraphs which is an easy extension of the analogous theorem for interval catch digraphs. We mainly focus our study to two of its subclasses namely, {\em oriented circular-arc catch digraphs} (cf. Definition \ref{CG}) and {\em proper circular-arc catch digraphs} (cf. Definition \ref{pcac}). To obtain the characterization of proper circular-arc catch digraphs we implement a new concept {\em monotone circular ordering} (cf. Definition \ref{MCO}) on its vertex set. The main idea of this result is about finding the structural significance of the corresponding augmented adjacency matrix of it when it satisfies monotone circular ordering after independent row and column permutation of it. Various properties of oriented circular-arc catch digraphs are also examined. Another worth mentioning result we obtain is that the underlying graph of a {\em proper oriented circular-arc catch digraph} (cf. Definition \ref{pocac}) is a proper circular-arc graph. Next we find that an oriented circular-arc catch digraph always possesses hamiltonian path when it is {\em unilateral}. We also discover that underlying undirected graph of an oriented circular-arc catch digraph can not contain $\overline{C_{n}},n\geq 8$ as induced subgraph. Further we study those oriented circular-arc catch digraphs which are tournaments and from their structural behavior we able to characterize them by identifying the entire list of forbidden subdigraphs. Finally we able to characterize {\em proper oriented circular-arc catch digraphs} by specifying a certain circular vertex ordering of its vertices. In conclusion we discuss relationships between these classes of digraphs.

\section{Preliminaries}

\noindent A matrix whose entries are only $0,1$ is a \textit{binary} matrix. The {\em augmented adjacency matrix} of a digraph $G$ is obtained from the adjacency matrix of $G$ by replacing $0$ by $1$ along the principal diagonal \cite{G} and is denoted by $A(G)$ throughout the article. In 1970 Tucker \cite{Tucker} first characterized circular-arc graphs and proper circular-arc graphs in terms of their augmented adjacency matrices. For this purpose he defined {\em circular $1$'s property} of a binary matrix, which is generalized version of consecutive $1$'s property used to characterize interval graphs and proper interval graphs. A  binary matrix is said to satisfy {\em circular $1$'s property along rows} if the columns can be permuted so that the $1$'s in each row are {\em circular}, i.e., they appear in a circularly consecutive way. Note that a binary matrix has the circular $1$'s property along rows if and only if it has circular $0$'s property along rows. Analogously {\em circular $1$'s property along columns} is defined by reversing the roles of rows and columns. A matrix satisfies {\em circular $1$'s property along rows and columns} if $1$'s in each row and column are circular for an independent permutation of rows and columns of it. In a digraph $G=(V,E)$, $\{v_{1},v_{2},v_{3}\}\subseteq V$ is called {\em diasteroidal triple} \cite{Prisner} if for every permutation $\sigma$ of $\{1,2,3\}$ there exists a $v_{\sigma_{1}}$ avoiding \footnote{A $v_{2}-v_{3}$ chain $\vec{P}=(v_{2}=a_{0}, a_{1},\hdots, a_{n}=v_{3})$ in $G$ is a digraph with $V(\vec{P})\subseteq V(G)$ and $E(\vec{P})\subseteq E(G)$ where either $a_{i}a_{i+1}$ or $a_{i+1}a_{i}$ is an arc (but not both) for each $i$, $0\leq i\leq n-1$. A $v_{2}-v_{3}$ chain $\vec{P}$ is called $v_{1}$ avoiding if $v_{1}\notin V(\vec{P})$ and $a_{i}v_{1}\notin E(G)$ for any $i$ with $a_{i}a_{i+1}\in E(\vec{P})$ or $a_{i}a_{i-1}\in E(\vec{P})$ \cite{GT}.} $v_{\sigma_{2}}-v_{\sigma_{3}}$ chain in $G$. The following characterization is known for interval catch digraphs.

\begin{thm} \cite{Maehara, Prisner}\label{diate}
Let $G=(V,E)$ be a simple digraph. Then the following are equivalent:
\begin{enumerate}
\item  $G$ is an interval catch digraph.
\item $G$ is diasteroidal triple-free.
\item  there exists an ordering $``<"$ of $V$ such that
\begin{equation}\label{icd1}
 for\hspace*{.5em}  x<y<z\in V,\hspace*{.5em}\ xz\in E\Longrightarrow xy\in E \hspace*{.5em}\text{ and }\hspace*{.5em} zx\in E\Longrightarrow zy\in E.
 \end{equation}
\end{enumerate}
\end{thm}

\noindent We call the ordering (\ref{icd1}) as an {\em ICD ordering} of $V$. This ordering is not unique for an ICD. For a simple digraph $G=(V,E)$ we denote the set of all {\em outneighbors} ({\em inneighbors}) of a vertex $u\in V$ by $N^{+}(u)$ ($N^{-}(u)$) i.e., $N^{+}(u)=\{v\in V|uv\in E\}$ and $N^{-}(u)=\{v\in V|vu\in E\}$ are the sets of open neighbors of $u$. For convenience we refer to $N^{+}(u)$, $N^{-}(u)$ as {\em outset} and {\em inset} of the vertex $u$ in $G$, respectively. Also we use the notation $N^{+}[u]$ ($N^{-}[u]$) to  denote the sets of closed neighbors of $u$ where $N^{+}[u]=N^{+}(u)\cup \{u\}$ and $N^{-}[u]=N^{-}(u)\cup \{u\}$. Also $|N^{+}(u)|$ is the {\em outdegree}, $|N^{-}(u)|$ is the {\em indegree} of $u$. The {\em underlying graph} of a digraph $G = (V,E)$ is an undirected graph $U(G) = (V^{\prime},E^{\prime})$ where $V = V^{\prime}$ and $E^{\prime} = \{uv | uv \in E \hspace{0.2em} or \hspace{0.2em} vu \in E\}$.
The underlying graph of a digraph $G = (V,E)$ is an undirected graph $U(G) = (V^{'},E^{'})$ where $V = V^{'}$ and $E^{'} = \{uv | uv \in E \hspace{0.2em} or \hspace{0.2em} vu \in E\}$.  We call a circular-arc catch digraph as {\em connected} when its underlying undirected graph is connected. A {\em tournament} is a digraph $G$ having exactly one direction for each edge of it so that the underlying graph $U(G)$ is a complete graph. The unique acyclic $n$-tournament for $n\geq 1$ is termed as {\em transitive tournament} and is denoted by $TT_{n}$ in \cite{Bang}. This has an ordering $v_{1},v_{2},\hdots,v_{n}$ of its vertices so that $v_{i}v_{j}$ is an arc whenever $1\leq i<j\leq n$. A digraph is said to be {\em unilaterally connected or unilateral} if for every pair of vertices $u,v$ either there is a directed path from $u$ to $v$ or from $v$ to $u$ (or both).

\vspace{0.7em}

\noindent  {\em Oriented interval catch digraphs} (in brief, oriented ICD) are those oriented graphs which are interval catch digraphs \cite{PG}.

\begin{thm}\cite{PG}\label{dag}
An oriented ICD is a directed acyclic graph (DAG).
\end{thm}

\noindent Again a {\em proper oriented interval catch digraph} (in brief, proper oriented ICD)  \cite{PG} is an oriented interval catch digraph where no interval contains other properly. The following is an important result on proper oriented interval catch digraphs which will be used while characterizing oriented circular-arc catch digraphs, which are tournaments.

\begin{figure}[h]
\hspace{7em}
\includegraphics[scale=0.39]{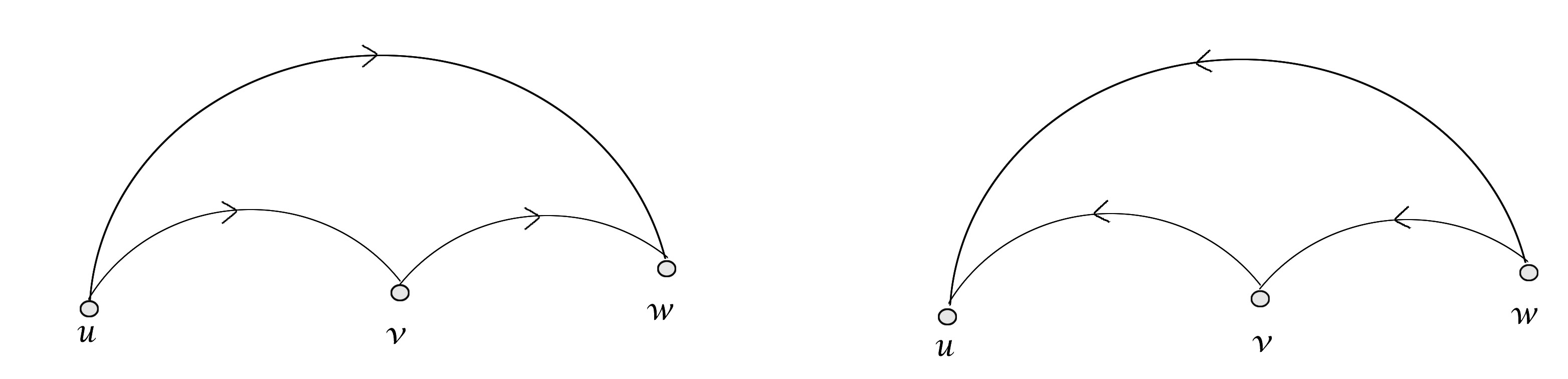}
\caption{proper oriented ICD ordering}
\end{figure}

\begin{thm}\label{poicd1}\cite{PG}
Let $G=(V,E)$ be an oriented graph. Then $G$ is a proper oriented interval catch digraph if and only if there exists a vertex ordering which satisfy the following
\begin{equation} \label{poicd}
\text{for} \ u<v<w, \text{if} \ uw\in E \ \text{then} \ uv,vw\in E \ \text{and if} \ wu\in E \ \text{then} \ wv,vu\in E.
\end{equation}
\end{thm}

\noindent The above ordering mentioned in (\ref{poicd}) is called {\em proper oriented ICD ordering} of $V$. 
\vspace{0.7em}

\noindent In \cite{PG1} we have provided forbidden subdigraph characterization of those proper oriented interval catch digraphs whose underlying graphs are chordal. As each tournament is a chordal graph, we can immediately conclude the following result as a corollary of Theorem $7.6$ of \cite{PG1}. 

\begin{cor}\label{tforbid}
Let $G=(V,E)$ be an oriented ICD which is a tournament. Then $G$ is a proper oriented ICD.
\end{cor}

\noindent By {\em trivial rows} of a binary matrix we mean zero rows (i.e.,  all entries as $0$) or full rows (i.e., all entries as $1$). Thus a {\em nontrivial row} is a row having at least one entry $1$ and at least one entry 0. Similar concepts can be defined for columns of a binary matrix. For a symmetric binary matrix $M$, {\em circularly compatible $1$'s property} \cite{Tucker} is defined in such a way so that the columns of $M$ satisfy circular $1$'s property and permuting order of rows and corresponding columns results in a matrix where the last $1$ (in cyclically descending order) of the circular stretch in the second column is always at least as low as the last $1$ of the circular stretch in the first column unless one of these column are trivial columns. A circular-arc graph is said to be a {\em proper circular-arc graph} if there exists a circular-arc representation where no arc is contained in another properly, i.e., it is the intersection graph of an inclusion-free family of circular-arcs. Proper circular-arc graphs admit several characterizations \cite{DHH, HBH, Lin, Skrien, Tucker}. Among them the following is a popular one.

\begin{thm}\cite{Tucker, Tuc}
A graph $G=(V,E)$ is a proper circular-arc graph if and only if there is an 
arrangement of the augmented adjacency matrix $A(G)$ satisfying circularly compatible $1$'s property.  
\end{thm}

\vspace{0.3em}

\noindent  A bipartite graph (in brief, bigraph) \cite{BDGS} $B=(U,V,E)$ is an {\em intersection bigraph} if there is a family $\mathcal{F}=\{I_{v}|v\in U\cup V\}$ of sets such that $uv\in E$ if and only $I_{u}\cap I_{v}\neq \emptyset$ for all $u\in U,v\in V$. An intersection bigraph is said to be an 
{\em interval bigraph} (respectively, a circular-arc bigraph) if $\mathcal{F}$ is a family of intervals on the real line (respectively, arcs on a circle). The submatrix of the adjacency matrix $B$ containing rows corresponding to the vertices of $U$ and columns corresponding to the vertices of $V$ is the {\em biadjacency matrix} of $B$. A {\em proper circular-arc bigraph} is an intersection bigraph of two families $\mathcal{F}_{1}$ and $\mathcal{F}_{2}$ of arcs on a circle where neither $\mathcal{F}_{1}$ nor $\mathcal{F}_{2}$ contains two arcs such that one is properly contained in other. Basu et.al in \cite{BDGS} used the idea of circularly compatible $1$'s to characterize {\em proper circular-arc bigraphs}. In this context {\em monotone circular arrangement} property was defined \cite{BDGS}. 
A $m\times n$ binary matrix $A$ not containing any full rows or columns is said to satisfy {\em monotone circular arrangement property} if the following holds

\begin{enumerate}
\item $A$ admits circular $1$'s property along rows and columns.
 
\item If $u_{1}, u_{2},\hdots, u_{m}$ are the rows of $A$ and
$[a_{i},b_{i}]$ be the circular stretch of $1$'s attached to $u_{i}$ for each $i\in\{1,2,\hdots,m\}$ then there exists a linear order of rows such that
$a_{1},a_{2},\hdots, a_{m}$ and $b_{1},b_{2},\hdots, b_{m}$ satisfy $a_{i}\leq a_{i+1}$ ($a_{m+1}=a_{1}$) and $b_{i}\leq b_{i+1}$ ($b_{m+1}=b_{1}$) where $\leq $ stands for a  non-decreasing circular order (clockwise).
\end{enumerate} 
 
\vspace{0.3em}

\noindent  In \cite{BDGS} proper circular-arc bigraphs have been characterized with the help of monotone circular arrangement property of their corresponding biadjacency matrices.

\vspace{0.3em}
 \vspace{0.3em}
 \begin{thm}\cite{BDGS}
Let $B=(U,V,E)$ be a bigraph. Then $B$ is a proper circular-arc bigraph if and only if the biadjacency matrix of $B$ satisfies monotone circular arrangement.
\end{thm}
 
\vspace{0.3em}

\noindent  We generalize the notion of monotone circular arrangement for a $m\times n$ binary matrix containing all types of rows (nontrivial/full) 
 in Section \ref{mc} to characterize proper circular-arc catch digraphs. 
Any nontrivial row can be considered as a subset of a full row by taking the columns containing $1$'s (along the rows) as elements of the sets. Therefore apparently one may think that full rows could not appear in the augmented adjacency matrix $A(G)$ of a proper circular-arc catch digraph $G$. However, this is not true (see Figure \ref{pcad}). In fact the situation becomes very difficult when we allow full rows in the matrix. To handle this challenge we define {\em monotone circular ordering} of a binary matrix containing non-zero rows of all types (i.e., where full rows are allowed).

\vspace{0.7em}

\noindent Although Tucker \cite{Tuc} characterized a proper circular-arc graph in two ways: first, in terms of adjacency matrix structure, and second, by identifying forbidden subgraphs of the corresponding graph, Hell, Bang-Jensen and Huang \cite{HBH}, Skrien \cite{Skrien}, Deng, Hell and Huang \cite{DHH} have given different characterizations of the same graph class.
The following is the main concept of characterizing a proper circular-arc graph introduced by them. They generalizes tournaments, i.e; orientations of a complete graph.

\vspace{0.7em}

\noindent  A {\em local tournament} is an oriented graph in which the inset, as well as the outset of every vertex induce tournaments. An oriented graph $G=(V,E)$ is {\em round} \cite{DHH} if the vertices of $G$ can be circularly ordered 
$v_{1},\hdots, v_{n}$ so that, for each vertex $v_{i}$, there are non-negative integers $l_{i},r_{i}$ such that the inset of $v_{i}$ are 
$\{v_{i-l_{i}},\hdots, v_{i-1}\}$ and the outset of $v_{i}$ are $\{v_{i+1},\hdots, v_{i+r_{i}}\}$ (here subscript additions and subtractions are done in modulo $n$) such that if $v_{i}v_{j}$ is an arc in $G$ (regardless of whether $i<j$ or $j<i$) then the subgraph induced by the vertices $v_{i},v_{i+1},\hdots,v_{j}$ (the subscripts are computed in modulo $n$) must form a transitive tournament. Such a circular ordering of vertices is called a {\em round enumeration} of $G$. An undirected graph is said to have 
a {\em round orientation} \cite{DHH} if it admits an orientation which is a round oriented graph. It is clear that a round oriented graph is a local tournament. A graph is {\em orientable as a local tournament} \cite{HBH} if there is an orientation of all its edges so that the corresponding oriented graph becomes a local tournament.  

\vspace{0.7em}
\begin{thm}\cite{DHH, HBH, Skrien}\label{pca}
The following are equivalent for a connected graph:
\begin{enumerate}
\item $G$ is a proper circular-arc graph.
\item $G$ is orientable as a local tournament.
\item $G$ has a round orientation.
\end{enumerate}
\end{thm}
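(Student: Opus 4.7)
The plan is to establish the three-way equivalence via a cyclic chain of implications $(1)\Rightarrow(3)\Rightarrow(2)\Rightarrow(1)$, passing in turn through the geometric, ordering, and combinatorial viewpoints.

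For $(1)\Rightarrow(3)$, starting from a proper circular-arc representation $\{A_1,\dots,A_n\}$, I would list the vertices in the cyclic order of the counterclockwise endpoints $s_i$ of the arcs. The proper assumption forces the cyclic order of the clockwise endpoints $t_i$ to coincide with that of the $s_i$. I would then orient each edge $v_iv_j$ by $v_i\to v_j$ iff $t_i\in A_j$. The outset of $v_i$ is then the block of consecutive vertices whose counterclockwise endpoints lie on the arc from $s_i$ to $t_i$, and the inset is symmetrically a consecutive block on the other side. The fact that any block $v_i,v_{i+1},\dots,v_j$ lying between the endpoints of an edge induces a transitive tournament follows because in a proper representation such a family of pairwise overlapping arcs is linearly ordered by clockwise endpoints.

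The implication $(3)\Rightarrow(2)$ is essentially by definition: given a round enumeration, the inset of each vertex is a consecutive block which by hypothesis induces a transitive tournament, in particular a tournament, and likewise for the outset, so the orientation is a local tournament. For $(2)\Rightarrow(1)$ I would first show that every connected local tournament admits a round enumeration, and then construct the arc model by placing $n$ equally spaced points $p_1,\dots,p_n$ on a circle in cyclic order and assigning to vertex $v_i$ the open arc starting just counterclockwise of $p_{i-l_i}$ and ending just clockwise of $p_{i+r_i}$, where $l_i$ and $r_i$ are the sizes of its inset and outset respectively. The definition of a round enumeration then ensures $A_i\cap A_j\neq\emptyset$ iff $v_iv_j$ is an edge, and the strict interleaving of endpoints rules out proper containment.

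The main obstacle is the combinatorial step of producing a round enumeration inside $(2)\Rightarrow(1)$. The approach would be to fix a starting vertex $v_1$, observe that its outset is a tournament, and argue (using connectedness together with the local tournament condition at each of its vertices) that this tournament is in fact transitive, thereby linearly ordering the outset. I would then propagate this ordering across the graph by using the overlaps of successive outsets and insets to determine a unique extension, and close the cycle using connectedness and finiteness. The delicate part is verifying that the ordering closes consistently around the circle: that the outset of the last vertex nests back into the neighborhood of $v_1$ without producing a vertex whose inset or outset fails to be a consecutive block, which typically requires ruling out short induced obstructions corresponding to non-proper or non-circular-arc configurations.
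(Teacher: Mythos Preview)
The paper does not prove this theorem at all: it is quoted from \cite{DHH} and \cite{Skrien} as a known tool and no argument is given, so there is nothing in the paper to compare your attempt against.

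On its own merits your cyclic scheme $(1)\Rightarrow(3)\Rightarrow(2)\Rightarrow(1)$ is the standard one, and the steps $(3)\Rightarrow(2)$ and the arc construction from a round enumeration are fine. Two points need care. In $(1)\Rightarrow(3)$ your orientation rule ``$v_i\to v_j$ iff $t_i\in A_j$'' is not well defined when two proper arcs together cover the whole circle, since then both $t_i\in A_j$ and $t_j\in A_i$; the usual remedy is to first pass to a proper representation in which no two arcs cover the circle, or to treat that degenerate case separately. More importantly, the substantive content of the theorem is exactly the step you flag as delicate: producing a round enumeration from a connected local tournament. Your outline (order the outset of a starting vertex, propagate, close up) is the right shape, but as written it presupposes that the outset of each vertex is a \emph{transitive} tournament, which is not part of the definition of local tournament and must itself be proved; and the global consistency of the propagation is precisely the theorem of Bang--Jensen and Huang underlying the cited references. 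So your sketch correctly locates the difficulty but does not yet resolve it.
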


\section{Circular-arc catch digraph}

\begin{defn}\label{cac}
Let $G=(V,E)$ be a simple directed graph. Then $G$ is said to be {\em circular-arc catch digraph} if for each $v\in V$ we can associate a circular-arc $I_{v}=[a_{v},b_{v}]$ and a point $p_{v}$ within $I_{v}$ such that $uv\in E$ if and only if $u\neq v$ and $p_{v}\in I_{u}$. Moreover, $G$ is said to have a {\em circular-arc catch digraph representation} $\{(I_{v},p_{v})|v\in V\}$.
\end{defn}

\noindent The following Theorem  characterizes a circular-arc catch digraph.

\begin{thm}\label{cop1}
Let $G=(V,E)$ be a simple digraph. Then $G$ is a circular-arc catch digraph if and only if there exists a vertex ordering with respect to which the augmented adjacency matrix $A(G)$ satisfies circular $1$'s property along rows.
\end{thm}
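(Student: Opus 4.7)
The plan is to exploit the defining observation that the $(u,v)$-entry of $A^*(G)$ equals $1$ precisely when $p_v\in I_u$: the off-diagonal $1$'s capture the condition $uv\in E \iff p_v\in I_u$, while the diagonal $1$'s are automatic because $p_u\in I_u$ holds by definition of a pointed set. Thus each row $u$ of $A^*(G)$ encodes exactly which points $p_v$ are caught by the arc $I_u$, and the whole theorem reduces to showing that this set of points is circularly consecutive when the vertices are placed in the cyclic order of their points on the host circle.

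For the forward direction, I would start with any circular-arc catch representation $\{(I_v,p_v):v\in V\}$ and relabel the vertices $v_1,\ldots,v_n$ so that the points $p_{v_1},\ldots,p_{v_n}$ appear in this cyclic order along the circle (with any convenient tie-breaking if two points coincide, which does not affect the incidence data). For each $u$, the set of points lying in the arc $I_u$ is, by definition of a circular-arc, a circularly consecutive block of this cyclic list, and therefore the $1$-entries of row $u$ occupy a circularly consecutive set of columns. This is precisely the circular ones property along rows.

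For the reverse direction, I would take a vertex ordering $v_1,\ldots,v_n$ under which $A^*(G)$ has the circular ones property along rows, and place distinct points $p_{v_1},\ldots,p_{v_n}$ at equally spaced positions on a circle following this order. For each $i$, let $J_i$ denote the circularly consecutive set of columns carrying $1$ in row $i$; this set is nonempty because it contains the diagonal index $i$. If $J_i=\{1,\ldots,n\}$, declare $I_{v_i}$ to be the entire circle; otherwise define $I_{v_i}$ to be the closed circular arc whose endpoints lie strictly between the extreme points of $J_i$ and their exterior neighbours. A routine verification then shows $p_{v_j}\in I_{v_i}$ iff $j\in J_i$, and translating this through the definition of $A^*(G)$ recovers $uv\in E \iff p_v\in I_u$ together with $p_u\in I_u$, giving a valid circular-arc catch representation.

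The main obstacle, more a technicality than a genuine difficulty, is the careful placement of arc endpoints in the sufficiency construction: they must be positioned strictly between the outermost $1$-indexed points of $J_i$ and the adjacent $0$-indexed points, so that no stray point slips into $I_{v_i}$ and every intended point truly lies inside. The full-row case must also be singled out separately, as there the arc degenerates to the entire circle and the endpoint argument does not apply. Once these boundary matters are dispatched, the equivalence is essentially a direct unpacking of definitions made possible by the diagonal-encoding observation on $A^*(G)$.
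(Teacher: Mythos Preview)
Your proposal is correct and follows essentially the same approach as the paper: order the vertices by the cyclic position of their base points $p_v$ for the forward direction, and for the converse place the points at positions $1,\ldots,n$ (the paper uses $p_i=i$ directly rather than your equally spaced points, but this is immaterial) and read off the arc $I_i$ from the circular block of $1$'s in row $i$. The paper is slightly less fussy than you about endpoint placement---it simply sets $a_i=i_1$, $b_i=i_2$ at the integer column indices themselves---but the underlying idea is identical.
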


\begin{proof}
Let $G=(V,E)$ be a circular-arc catch digraph with representation 
$(I_{i},p_{i})$ where $I_{i}=[a_{i},b_{i}]$ be the circular-arc associated with vertex $v_{i}$ and $p_{i}$ be a point within $I_{i}$ such that $v_{i}v_{j}\in E$ if and only if $p_{j}\in I_{i}$. We now arrange the vertices of $V$ according to the increasing order of $p_{i}$'s along both rows and columns of $A(G)=(a_{i,j})$. Suppose $a_{i,j_{1}}=a_{i,j_{2}}=1$, where $j_{2}$ is circularly right (clockwise) of $j_{1}$ along the circular stretch of the $i^{\text{th}}$ row. Let $j$ be lying between $j_{1}$ and $j_{2}$ according to the circular order. Since $p_{j_{1}},p_{j_{2}}\in I_{i}$ and $I_{i}$ is a circular-arc, we have $p_{j}\in I_{i}$. Hence $v_{i}v_{j}\in E$, which implies $a_{i,j}=1$. Thus $A(G)$ satisfies circular $1$'s property along rows. 

\vspace{0.37em}
\noindent Conversely, Let $A(G)$ satisfy circular $1$'s property along rows of it with respect to the vertex ordering $v_{1},\hdots,v_{n}$.
Now if in the $i^{\text{th}}$ row $1$'s are consecutive we assign $a_{i}=i_{1}$ and $b_{i}=i_{2}$ and $p_{i}=i$ where $i_{1},i_{2}$ denote the first and last column containing $1$'s in the $i^{\text{th}}$ row respectively. Again if in the $i^{\text{th}}$ row zeros are consecutive we assign $a_{i}=i_{3}$, $b_{i}=i_{4}$ and $p_{i}=i$ where $i_{3}(i_{4})$ denote the first (last) column containing $1$'s after(before) the zeros stretch. Therefore $v_{i}v_{j}\in E$ if and only if $i_{1}\leq j\leq i_{2}$ in the first case and $i_{3}\leq j\leq n$ or $1\leq j\leq i_{4}$ in the second case. Now for each vertex $v_{i}\in V$ if we set circular-arc $I_{i}=[a_{i},b_{i}]$ and the corresponding point $p_{i}$ as defined above, then $\{(I_{i},p_{i})|v_{i}\in V\}$ clearly gives the circular-arc catch digraph representation of $G$. (note that as $a_{i,i}=1$, $p_{i}\in I_{i}$)
\end{proof}

\noindent We call the ordering described in above theorem as {\em circular catch ordering} of $V$. The ordering is not unique for a circular-arc catch digraph. Let $p_{i},p_{j}$ be two distinct points around the circle, then $[p_{i},p_{j}]$ denote the clockwise arc between these two points rounding the circle including the points $p_{i},p_{j}$.

\section{Oriented circular-arc catch digraphs}

\begin{defn}\label{CG}
A circular-arc catch digraph $G=(V,E)$ is said to be {\em oriented} \footnote{For an oriented catch digraph $G$, we can consider each point $p_{v}$ to be distinct otherwise if $p_{u}=p_{v}$ for some $u\neq v$ then $uv,vu\in E$ introduces contradiction.} if for all $u,v\in V$, $uv\in E$ implies $vu\notin E$. 
In other way we can say a simple digraph $G$ is an {\em oriented circular-arc catch digraph} if and only if $G$ is oriented and there exists an ordering of vertices of $V$ such that the augmented adjacency matrix $A(G)=(a_{i,j})$ satisfies circular $1$'s property along rows of it (see Theorem \ref{cop1}) and for each pair $i\neq j$, $a_{i,j}=1$ implies $a_{j,i}=0$.
\end{defn}

\begin{lem}\label{unilateral}
Let $G=(V,E)$ be an oriented circular-arc catch digraph having no directed cycle of length $n=|V|$. Then $G$ always contain a vertex of outdegree zero.
\end{lem}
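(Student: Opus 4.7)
The plan is to establish the contrapositive: if every vertex of an oriented circular-arc catch digraph $G$ has at least one outgoing edge, then $G$ contains a directed cycle of length $n$. First I would invoke Theorem~\ref{cop1} to fix a circular catch ordering $v_1,\ldots,v_n$ of $V$ under which the augmented adjacency matrix $A^{*}(G)=(a_{ij})$ satisfies the circular ones property along its rows; all indices below are read modulo $n$.

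Next I would translate the outdegree hypothesis into matrix language. The ones of row $i$ are circularly consecutive and always contain the diagonal entry $a_{ii}=1$, so if $v_i$ has any outgoing edge then row $i$ has at least two ones, and its consecutive-ones arc must extend into position $i+1$ or position $i-1$. Call $v_i$ \emph{right-going} when $a_{i,i+1}=1$ and \emph{left-going} when $a_{i,i-1}=1$; every vertex is at least one of the two.

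The crux is a propagation step using the oriented hypothesis. Suppose $v_i$ is right-going. Then $a_{i+1,i}=0$, so position $i$ is missing from the consecutive-ones arc of row $i+1$. Because that arc is contiguous, contains position $i+1$, and has at least two entries (outdegree hypothesis), it can only extend clockwise from $i+1$, forcing $a_{i+1,i+2}=1$ and making $v_{i+1}$ right-going. Iterating around the circular ordering, a single right-going vertex makes every vertex right-going, and the edges $v_1\to v_2\to\cdots\to v_n\to v_1$ then form a Hamiltonian directed cycle. If instead no vertex is right-going, then every vertex must be left-going, and the symmetric propagation $v_i$ left-going $\Rightarrow v_{i-1}$ left-going yields the Hamiltonian directed cycle $v_1\to v_n\to\cdots\to v_2\to v_1$. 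Either way a directed cycle of length $n$ exists, contradicting the hypothesis of the lemma.

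I do not foresee a serious obstacle. The only place requiring care is the propagation step, where one must verify that no other placement of the consecutive-ones arc in row $i+1$ is compatible with containing $i+1$, excluding $i$, and having length at least two; this is immediate from contiguity modulo $n$, but it is the step where both the circular ones property and the oriented hypothesis are genuinely used together.
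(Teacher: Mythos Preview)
Your proposal is correct and follows essentially the same approach as the paper: both fix a circular catch ordering via Theorem~\ref{cop1}, observe that each row's circular block of ones must reach one of the two neighbouring columns, and then use the oriented hypothesis to propagate this direction around the cycle. Your ``right-going/left-going'' formulation and the case split ``some vertex is right-going'' versus ``all vertices are left-going'' is slightly cleaner than the paper's version, which anchors the argument at $v_1$ and splits on whether $v_1v_2\in E$ or $v_1v_n\in E$, but the substance is identical.
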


\begin{proof}
On the contrary let us assume $G$ does not contain any vertex of outdegree zero. Let  $v_{1}\hdots v_{n}$ be a circular catch ordering of $V$. Then from Theorem \ref{cop1} it follows that the augmented adjacency matrix $A(G)=(a_{i,j})$ satisfies circular $1$'s property along rows with respect to this ordering. As $a_{i,i}=1$ for all $i\in \{1,2,\hdots,n\}$, one can easily verify now that at least one among $v_{1}v_{2},v_{1}v_{n}$ must belong to $E$.

\vspace{0.6em}

\noindent  If $v_{1}v_{2}\in E$ then $v_{2}v_{1}\notin E$ as $G$ is oriented, which imply $v_{2}v_{3}\in E$ otherwise $v_{2}$ becomes an outdegree zero vertex, which is not true from our assumption. Proceeding similarly we get $v_{3}v_{4},\hdots,v_{n-1}v_{n}\in E$. Hence $v_{n}v_{n-1}\notin E$ and therefore $v_{n}v_{1}\in E$. Thus we get a directed $n$-cycle $(v_{1},\hdots,v_{n})$ in $G$ which is a contradiction.

\noindent Also note that $v_{1}v_{2}$ and $v_{1}v_{n}$ can not be arcs of $E$ simultaneously. As when $v_{1}v_{2}\in E$, from above we get $v_{n}v_{1}\in E$. 
Hence $v_{1}v_{n}\notin E$ as $G$ is oriented.

\vspace{0.6em}

\noindent Next we consider the case when $v_{1}v_{n}\in E$. Then $v_{n}v_{1}\notin E$ as $G$ is oriented and hence $v_{n}v_{n-1}\in E$ otherwise $v_{n}$ becomes a vertex of out-degree zero. Now proceeding similarly as above starting with vertex $v_{n}$ one can able to find a directed $n$-cycle $(v_{n},\hdots, v_{1})$ in $G$, which again contradicts our assumption.

\vspace{0.6em}

\noindent Hence there must exist at least one vertex of out-degree zero in $G$. 
\end{proof}

\noindent Below we ensure the existence of hamiltonian path in an oriented circular-arc catch digraph when it is unilateral.

\begin{thm}
Let $G=(V,E)$ be an oriented circular-arc catch digraph which is unilateral. Then $G$ always possesses a hamiltonian path.
\end{thm}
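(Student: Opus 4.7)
I would prove a slight strengthening of the theorem by induction on $n = |V|$, namely: every oriented unilateral circular-arc catch digraph $G$ on $n$ vertices admits a Hamiltonian path, and moreover (a) if $G$ contains a directed $n$-cycle, then for every vertex $w \in V$ there is a Hamiltonian path of $G$ ending at $w$, and (b) otherwise $G$ has a unique sink $v$ and a Hamiltonian path ending at $v$. The base $n = 1$ is trivial. For the inductive step, if $G$ has a directed Hamiltonian cycle then breaking it at the outgoing arc of any prescribed $w$ immediately gives (a).

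In the remaining case, Lemma~\ref{unilateral} produces a vertex $v$ of outdegree zero; unilaterality forces $v$ to be the unique sink, because two sinks admit no directed path between each other. Set $G' = G - v$. I would verify that $G'$ inherits the three hypotheses: it is oriented by inheritance; it is a circular-arc catch digraph via the representation obtained by deleting $(I_v, p_v)$; and it is unilateral because any directed path in $G$ joining two vertices of $V \setminus \{v\}$ cannot internally traverse the sink $v$ (otherwise $v$ would have an outgoing arc along the path). The inductive hypothesis then applies to $G'$ in one of two sub-cases.

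If $G'$ has a directed Hamiltonian cycle, then since $G$ is unilateral and $n \geq 2$, the sink $v$ has at least one in-neighbor $w \in V(G')$; part~(a) applied to $G'$ furnishes a Hamiltonian path of $G'$ ending at $w$, which extends by the arc $w \to v$ to a Hamiltonian path of $G$ ending at $v$. If $G'$ has no Hamiltonian cycle, then part~(b) applied to $G'$ yields a unique sink $v'$ of $G'$ together with a Hamiltonian path of $G'$ ending at $v'$. The crucial observation is that $v' \to v$ must lie in $E(G)$: otherwise $v'$ would have no out-arc in $G$ either, making $v'$ a second sink of $G$ and contradicting uniqueness of $v$. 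Appending $v$ via $v' \to v$ again produces a Hamiltonian path of $G$ ending at $v$, establishing~(b).

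The main obstacle is recognising that a naive induction producing only some Hamiltonian path of $G'$ is insufficient, since attaching $v$ at the end requires control over the terminal vertex; this forces the inductive hypothesis to track endpoints explicitly, as in (a) and (b). Once the strengthening is in place, the closure of the induction is driven by the observation that any sink of $G'$ must automatically lift to an in-neighbor of $v$ in $G$, courtesy of the uniqueness of the sink of $G$ under unilaterality.
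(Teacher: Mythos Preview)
Your proof is correct and follows essentially the same approach as the paper's: peel off a sink (supplied by Lemma~\ref{unilateral}), observe that the next sink in the reduced digraph must point to it by unilaterality, and iterate. Your explicit strengthened induction hypothesis (tracking the terminal vertex via (a)/(b)) cleanly formalizes what the paper handles by unrolling the first two steps and then appealing to ``induction on $V$''.
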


\begin{proof}
Let $|V|=n$. If $G$ contains a directed $n$-cycle then it possesses a hamiltonian path clearly. Now if $G$ does not contain a directed $n$-cycle then from Lemma \ref{unilateral} it must contain a vertex (say $v_{o}$) of out-degree zero. 

\vspace{0.3em}
\noindent If the subdigraph induced by the vertices of $V\setminus \{v_{o}\}$ contains a directed $(n-1)$-cycle (say $C=(v_{1},\hdots,v_{i},v_{i+1}$,\\
$\hdots,v_{n})$), then it has a hamiltonian path containing its vertices. Now as $G$ is unilateral there must exist at least one vertex $v_{i}$ in $C$ which for which $v_{i}v_{o}\in E$. Hence $(v_{i+1},\hdots,v_{n},v_{1},\hdots, v_{i},v_{o})$ will be the required hamiltonian path of $G$ where $v_{i+1}$ is the vertex occurred right after $v_{i}$ in $C$ satisfying $v_{i}v_{i+1}\in E$. 

\vspace{0.3em}

\noindent Again if the subdigraph induced by $V\setminus \{v_{o}\}$ does not contain any directed $(n-1)$-cycle then it must possesses a vertex (say $v_{l}$) of zero out-degree by Lemma \ref{unilateral}. Clearly $v_{l}$ has out-degree one in $G$ otherwise there can not exist any directed path between $v_{o},v_{l}$, which contradicts the fact that $G$ is unilateral. Therefore $v_{l}v_{o}\in E$. Now if the subdigraph induced by $V\setminus \{v_{o},v_{l}\}$ contain a directed $(n-2)$-cycle (say $C^{'}=(v_{1},\hdots,v_{j},v_{j+1},\hdots,v_{n})$), then there is no directed path between a vertex of $C^{'}$ and $v_{l}$ which pass through $v_{o}$ in $G$.
But as $G$ is unilateral there must exist at least one vertex (say $v_{j}$) in $C^{'}$ for which $v_{j}v_{l}\in E$. Therefore $(v_{j+1},\hdots,v_{n},v_{1},\hdots,v_{j},v_{l},v_{0})$ is the required hamiltonian path of $G$ where $v_{j+1}$ is the vertex that occurred immediately after $v_{j}$ in $C^{'}$. 

\vspace{0.3em}
\noindent Again if the subdigraph formed by $V\setminus \{v_{o},v_{l}\}$ does not contain any directed $(n-2)$-cycle, then it must contain a vertex (say $v_{r}$) of out-degree zero and as the graph $G$ is oriented and unilateral we have $v_{r}v_{l}\in E$ and hence there exists a directed path $(v_{r},v_{l},v_{0})$ from $v_{r}$ to $v_{0}$. Next we continue the process as before. As $|V|=n$, proceeding similarly after some finite number of steps one can able to find a hamiltonian path of $G$.
\end{proof}

\begin{lem}\label{3cycle}
Let $\vec{C}=(v_{1},v_{2},v_{3})$ be a directed $3$-cycle. Then $\vec{C}$ is a circular-arc catch digraph
and in any circular-arc catch digraph representation of it, arcs corresponding to the vertices of 
$\vec{C}$ must cover the whole circle. 
\end{lem}

\begin{proof}
One can observe that $v_{1}\prec v_{2}\prec v_{3}$ or any of its cyclic permutation (i.e., $v_{2}\prec v_{3}\prec v_{1}$ and $v_{3}\prec v_{1}\prec v_{2}$) are the possible circular catch orderings of $V(\vec{C})$. Hence it is easy to verify that any of these vertex orderings satisfy Theorem \ref{cop1}. Therefore $\vec{C}$ is a circular-arc catch digraph. 

\noindent Let $v_{1}\prec v_{2}\prec v_{3}$ be a circular catch ordering of $V(\vec{C})$, then $p_{1}<p_{2}<p_{3}$ where $<$ is the increasing (clockwise) ordering of the points around a circle. Now as $v_{1}v_{2},v_{2}v_{3},v_{3}v_{1}\in E(\vec{C})$ and $\vec{C}$ is oriented, the result follows. Similar logic also hold true for other for other circular catch orderings of $V(\vec{C})$. 
\end{proof}

\noindent Prisner in \cite{Prisner} noted a lot of interesting characteristics of interval catch digraphs. One among them is that the underlying graph of an interval catch digraph is always perfect. But we find that this result does not hold true for oriented circular-arc catch digraphs as any directed cycle of length $n\geq 3$ belongs to this graph class having circular-arc catch digraph representation $I_{i}=[i,i+1]$, $p_{i}=i$ for all $i=1,2,\hdots,n-1$, $I_{n}=[n,1]$, $p_{n}=n$. In fact there exists an oriented circular-arc catch digraph $D$ with representation $I_{1}=[3,6], p_{1}=5, I_{2}=[1.9,2.1], p_{2}=2, I_{3}=[2.9,3.1], p_{3}=3, I_{4}=[5.9,2], p_{4}=6, I_{5}=[2,4.1], p_{5}=4, I_{6}=[0.9,3],p_{6}=1$ for which its underlying graph is $\overline{C_{6}}$. Again the oriented graph $D^{'}$ with circular-arc catch digraph representation $I_{1}=[4.9,7.1], p_{1}=5, I_{2}=[0.9,3.1], p_{2}=1, I_{3}=[3.9,6.1], p_{3}=4, I_{4}=[6.9,2.1], p_{4}=7, I_{5}=[2.9,5.1], p_{5}=3, I_{6}=[5.9,1.1],p_{6}=6, I_{7}=[1.9,4.1], p_{7}=2$ possesses its underlying graph as $\overline{C_{7}}$. However, we observe that complement of a cycle having length greater than $7$ can never be an underlying graph of an oriented circular-arc catch digraph.

\begin{lem}\label{c9}
Let $G$ be an oriented circular-arc catch digraph. Then the underlying graph of $G$, i.e., $U(G)$ does not contain $\overline{C_{8}}$ or $\overline{C_{9}}$ as an induced subgraph.
\end{lem}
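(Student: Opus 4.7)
My plan is to argue by contradiction. Assume that $U(G)$ contains an induced copy of $\overline{C_n}$ for some $n\in\{8,9\}$, labelled $x_1,\ldots,x_n$ so that the missing edges of the induced subgraph are exactly $\{x_i x_{i+1}\}_{i\in\mathbb{Z}_n}$. Let $(I_i,p_i)$ be the circular-arc catch data of these $n$ vertices. I arrange the points $p_1,\ldots,p_n$ in clockwise order around the circle and write $\sigma(i)$ for the cyclic position of $p_i$.

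The first step is to translate the hypothesis into a combinatorial statement about $\sigma$. By Theorem~\ref{cop1} applied to the induced augmented adjacency sub-matrix on $x_1,\ldots,x_n$, the points that $I_i$ contains correspond, via $\sigma$, to a cyclic sub-interval of $\{1,\ldots,n\}$ containing $\sigma(i)$. Because $x_i\not\sim x_{i\pm 1}$ in $U(G)$, we have $p_{i\pm 1}\notin I_i$, so this sub-interval avoids both $\sigma(i-1)$ and $\sigma(i+1)$. The oriented hypothesis further forces, for each edge $x_i x_j$ of $\overline{C_n}$ (i.e.\ for each $j\in\mathbb{Z}_n\setminus\{i,i-1,i+1\}$), that exactly one of $p_j\in I_i$ or $p_i\in I_j$ holds.

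The core of the proof is a case analysis on the cyclic order $\sigma$. For each candidate order, up to rotation and reflection, the positions $\sigma(i-1),\sigma(i+1)$ bound the two one-sided reaches of $I_i$ out of $\sigma(i)$, giving upper bounds on the individual out-degrees. These bounds, combined with the global identity
\[
\sum_{i=1}^{n}\bigl(|I_i\cap\{p_1,\ldots,p_n\}|-1\bigr)=|E(\overline{C_n})|=\frac{n(n-3)}{2}
\]
and the XOR condition for each edge, yield an over-determined system. In each cyclic order I expect the infeasibility to be witnessed by a specific edge of $\overline{C_n}$, typically between a pair of vertices that lie nearly antipodally in $\sigma$, for which neither $p_j\in I_i$ nor $p_i\in I_j$ can hold once all the other conditions are imposed.

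The main obstacle is to tame the combinatorial explosion of cyclic orders of $n$ labels. A useful first reduction is the observation that whenever two labels $i$ and $i+1$ are placed at neighbouring cyclic positions, $I_i$ is blocked from extending towards $p_{i+1}$ and $I_{i+1}$ from extending towards $p_i$; tracking such forced one-sided intervals prunes the orderings drastically. The heart of the argument is then, separately for $n=8$ and $n=9$, to verify that after all such simplifications every remaining cyclic ordering still contains an unorientable edge.
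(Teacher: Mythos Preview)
Your outline is conceptually sound---Theorem~\ref{cop1} does reduce the question to the combinatorics of the cyclic order $\sigma$ together with the XOR condition on each edge---but the proof as written has a real gap: the ``heart of the argument'' is an exhaustive case analysis over all cyclic orders of $n$ labelled points, and for $n=9$ there are $8!/2=20160$ of these. You acknowledge this explosion as the main obstacle, but the pruning you propose (tracking forced one-sided arcs when $i$ and $i+1$ are cyclically adjacent) constrains the arcs \emph{within} a given ordering; it does not eliminate orderings. Neither the degree-sum identity nor the per-edge XOR constraint is sharp enough to collapse the case analysis to something that can actually be written down, and you give no argument that it does. As it stands the proposal is a strategy, not a proof.

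The paper's proof avoids this explosion entirely by working with a carefully chosen four-vertex subset $S=\{v_2,v_4,v_6,v_9\}$ of the $\overline{C_9}$ (any three of which induce a triangle). The key observation is that a \emph{directed} $3$-cycle in the catch representation must have arcs whose union covers the whole circle; using this, each of the four triangles in $S$ is shown, via one or two auxiliary vertices $v_1,v_3,v_5,v_7,v_8$, not to be a directed $3$-cycle. Hence the tournament on $S$ is transitive, which leaves (up to symmetry) only four cyclic orders of the four points $p_2,p_4,p_6,p_9$ to check, each dispatched by locating a single auxiliary vertex whose adjacency is violated. The same scheme handles $\overline{C_8}$. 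The reduction from $n$ points to a $4$-point transitive tournament is precisely the idea your plan is missing.
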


\begin{proof}
Let $G=(V,E)$ be an oriented circular-arc catch digraph with circular-arc catch digraph representation $\{(I_{v}=[a_{v},b_{v}],p_{v})|v\in V\}$. Then there exists an ordering (say $<_{\sigma}$) of vertices of $V$ so that the augmented adjacency matrix $A(G)$ satisfies Theorem \ref{cop1}.
Let us assume on the contrary that $U(G)$ contains $\overline{C_{9}}$ as induced subgraph. Let $v_{1},\hdots, v_{9}$ denote the vertices occurred in circularly consecutive way (clockwise or anticlockwise) in $C_{9}$. Now consider the vertices of the set $S=\{v_{2},v_{6},v_{4},v_{9}\}$ in $G$.
It is evident from the adjacencies of the vertices of $S$ that they form a tournament. Also note that any three vertices of $S$ form $C_{3}$ in $U(G)$.


\vspace{0.2em}

\noindent We first consider the $3$-cycle $C=\{v_{2},v_{6},v_{4}\}$. Now if $C$ is directed then according to Lemma \ref{3cycle}, $I_{2}\cup I_{6}\cup I_{4}$ must cover the whole circle. Let $v_{2}v_{6},v_{6}v_{4},v_{4}v_{2}\in E$. Without loss of generality we assume $v_{2}<_{\sigma} v_{6}<_{\sigma} v_{4}$ and hence $p_{1}<p_{6}<p_{4}$. Then as $v_{5}$ is not adjacent to $v_{4},v_{6}$ in $U(G)$, $p_{5}\notin I_{4}, I_{6}$. Therefore $p_{5}$ must belong to $I_{2}$. Again as $G$ is oriented, $p_{2}\notin I_{5}$. Infact as $v_{5},v_{6}$ are nonadjacent in $U(G)$,

\begin{equation}\label{c1}
I_{5} \subset (p_{2},p_{6})
\end{equation}

\noindent Similarly one can show $I_{3}\subset (p_{6},p_{4})\subset I_{6}$ as $v_{3}$ is nonadjacent to $v_{2},v_{4}$ in $U(G)$ and $v_{3}v_{6}\notin E$ as $G$ is oriented. Now as $(p_{6},p_{4})\subset (p_{6},p_{2})$, from above we get 

\begin{equation}\label{c2}
I_{3}\subset (p_{6},p_{2})
\end{equation}

\noindent Therefore $v_{3},v_{5}$ become nonadjacent in $U(G)$ follows from (\ref{c1}), (\ref{c2}). Hence contradiction arises. Again when $v_{6}v_{2},v_{2}v_{4},v_{4}v_{6}\in E$, the same contradiction occurs.







\vspace{0.2em}

\noindent Similarly one can show that $\{v_{2},v_{6},v_{9}\}, \{v_{2},v_{4},v_{9}\}$ or $\{v_{4},v_{6},v_{9}\}$ can not be directed $3$-cycles in $G$. Hence we can conclude that any $3$-cycle formed by the vertices of the set $S$ must be transitively oriented in $G$. Therefore each of the vertices of $S$ possesses different in-degree and out-degree restricted on $S$. Without loss of generality we assume $v_{2}v_{6},v_{2}v_{4},v_{2}v_{9},v_{6}v_{4},v_{6}v_{9},v_{4}v_{9}\in E$.  
Then the possible circular catch orderings of the vertices of $S$ satisfying 
Theorem \ref{cop1} are $i)$ $v_{2}\prec v_{6}\prec v_{9}\prec v_{4}$, $ii)$ $v_{2}\prec v_{4}\prec v_{9}\prec v_{6}$, $iii)$ $v_{2}\prec v_{6}\prec v_{4}\prec v_{9}$, $iv)$ $v_{2}\prec v_{9}\prec v_{4}\prec v_{6}$ where $\prec=<_{\sigma}|_{S}$ and their cyclic permutations. 






\vspace{0.6em}


\noindent $i)$ First we consider the case when $\mathbf{v_{2}\prec v_{6}\prec v_{9}\prec v_{4}}$. 
\vspace{0.6em}

\noindent Then $p_{2}< p_{6}< p_{9}< p_{4}$. Now $v_{4}v_{9},v_{6}v_{9}\in E$ imply $v_{9}v_{4},v_{9}v_{6}\notin E$ as $G$ is oriented. Therefore $p_{4},p_{6}\notin I_{9}$. Hence,

\begin{equation}\label{c5}
I_{9}\subset (p_{6},p_{4})
\end{equation}

\noindent Note that $v_{5}$ is nonadjacent to $v_{6},v_{4}$ in $U(G)$. Now if
$I_{5}\subset (p_{6},p_{4})$ then $p_{5}\in I_{5}\subset (p_{6},p_{4})\subseteq I_{6}$ as $v_{6}v_{9},v_{6},v_{4}\in E$ and $v_{6}v_{2}\notin E$, which is a contradiction. Therefore,

 
\begin{equation}\label{cc6}
 I_{5}\subset (p_{4},p_{6})
\end{equation}
  
\noindent Hence $v_{5},v_{9}$ become nonadjacent in $U(G)$ from (\ref{c5}) and (\ref{cc6}). Thus contradiction arises. 

\vspace{0.6em}
\noindent $\mathbf{ii)}$ Similar argument as case $(i)$ will lead to contradiction when $\mathbf{v_{2}\prec v_{4}\prec v_{9}\prec v_{6}}$.

\vspace{0.7em}

\noindent $\mathbf{iii)}$ Next we consider the case when $\mathbf{v_{2}\prec v_{6}\prec v_{4}\prec v_{9}}$.

\vspace{0.3em}
\noindent In this case $p_{2}< p_{6}< p_{4}< p_{9}$ clearly. 
\vspace{0.3em}

\noindent a) Now if $I_{2}$ intersects $p_{9}$ anticlockwise and intersects $p_{6}$ clockwise, then $I_{2}\cup I_{6}$ covers the whole circle as $v_{6}v_{9}\in E$ and $v_{6}v_{2}\notin E$ follows from the fact $v_{2}v_{6}\in E$ and $G$ is oriented. 
Now as $v_{3}$ is nonadjacent to $v_{2}$ in $U(G)$, $p_{3}\notin I_{2}$. Therefore $p_{3}\in I_{6}$.
Again $v_{4}v_{9}\in E$ imply $(p_{4},p_{9})\subseteq I_{4}$. But as $v_{3},v_{4}$ are nonadjacent 
$p_{3}$ can only belong to the arc $(p_{6},p_{4})$. Therefore 

\begin{equation}\label{c7}
I_{3}\subset (p_{6},p_{4})  
 \end{equation}
 
\noindent as $G$ is oriented and $p_{4}\notin I_{3}$. Again from adjacency of $v_{9}$ it is clear that $I_{9}\subset (p_{4},p_{2})\subset (p_{4},p_{6})$. Hence $v_{3},v_{9}$ become nonadjacent in $U(G)$ from (\ref{c7}), which is a contradiction.

\vspace{0.2em}

\noindent b) Next we consider the case when $I_{2}$ intersects $p_{6},p_{4},p_{9}$ clockwise around the circle. As $v_{1},v_{2}$ are nonadjacent in $U(G)$, $p_{1}\in (p_{9},p_{2})$. In particular 

\begin{equation}\label{c8}
I_{1}\subset (p_{9},p_{2}) \hspace{0.3em} \mbox{and} \hspace{0.3em} I_{9}\subset (p_{4},p_{1})
\end{equation}

\noindent as $v_{1},v_{9}$ are nonadjacent and $p_{4}\notin I_{9}$ as $G$ is oriented and $v_{4}v_{9}\in E$. Now as $v_{6},v_{1}$ are adjacent in $U(G)$, $v_{6}v_{1}\in E$ as $I_{1}\subset (p_{9},p_{2})$ from (\ref{c8}). Again $p_{7}\notin I_{6}$ imply $p_{7}\in (p_{1},p_{2})$ or $(p_{2},p_{6})$.
If $p_{7}\in (p_{2},p_{6})$ then

\begin{equation}
I_{7}\subset (p_{2},p_{6})\subset (p_{1},p_{4})
\end{equation}
   
 \noindent which contradicts the adjacency between $v_{7},v_{9}$ in $U(G)$ from (\ref{c8}). Hence $p_{7}\in (p_{1},p_{2})$.  Clearly, $p_{7}\notin I_{9}$ as $I_{9}\subset (p_{4},p_{1})$ from (\ref{c8}). Now as $v_{7},v_{9}$ are adjacent in $U(G)$, $v_{7}v_{9}\in E$. Further one can note that $v_{7}v_{1}\in E$ as $p_{1}\in(p_{9},p_{7})$ and $v_{7}v_{9}\in E$. This imply 
 
\begin{equation} \label{c99}
 I_{1}\subset (p_{9},p_{7})\subseteq I_{7}
\end{equation}
 
\noindent as $G$ is oriented and $\{v_{1},v_{9}\}$, $\{v_{7},v_{6}\}$ are nonadjacent in $U(G)$.
Now as $p_{8}\notin I_{7}$, $p_{8}\in(p_{7},p_{9})$ from (\ref{c99}). Then $I_{8}\subset(p_{7},p_{9})$ as $G$ is oriented and $v_{8}$ is nonadjacent to $v_{7},v_{9}$. This contradicts the adjacency between $v_{8},v_{1}$ in $U(G)$ from (\ref{c99}).

\vspace{0.2em}

\noindent c) Again if $I_{2}$ intersects $p_{9},p_{6},p_{4}$ anticlockwise around the circle. Then since $p_{3}\notin I_{2}$, $p_{3}\in (p_{2},p_{6})$. More specifically  $I_{3}\subset (p_{2},p_{4})$ as $v_{3}$ is nonadjacent to $v_{2},v_{4}$. But this again contradict the adjacency between $v_{3},v_{9}$ as $I_{9}\subset (p_{4},p_{2})$ as $v_{4}v_{9}\in E$ and $G$ is oriented.

\vspace{0.6em}
\noindent  $\mathbf{iv)}$ When $\mathbf{v_{2}\prec v_{9}\prec v_{4}\prec v_{6}}$,
 a similar contradiction will arise as in case $(iii)$. 

\vspace{0.6em}
\noindent Now proceeding similarly as the case of $\overline{C_{9}}$ we able to find contradiction for $\overline{C_{8}}$. Due to its lengthy verification, we leave the details for the readers.
\end{proof}

\begin{lem}\label{c10}
Let $G$ be an oriented circular-arc catch digraph. Then $U(G)$ does not contain 
$\overline{C_{9}}\setminus \{v\}$ as an induced subgraph where $v$ is any vertex of $\overline{C_{9}}$.
\end{lem}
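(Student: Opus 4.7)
By the vertex-transitivity of $\overline{C_9}$, it suffices to consider the deletion of a single fixed vertex, say $v_1$. So assume for contradiction that $U(G)$ contains $\overline{C_9}\setminus\{v_1\}$ as an induced subgraph on vertices $v_2,\ldots,v_9$. My intention is to rerun the proof of Lemma \ref{c9} on the four-element set $S=\{v_2,v_4,v_6,v_9\}$, which still induces a $K_4$ in $\overline{C_9}$ (the pairwise cyclic distances lie in $\{2,3,4,5\}$, so no pair among them is adjacent in $C_9$) and hence a tournament in $G$.

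Step 1: reproduce the argument of Lemma \ref{c9} showing that none of the four $3$-subsets of $S$ can be cyclically oriented. Each such sub-argument derives its contradiction from the nonadjacency of some $v_3$, $v_5$, or $v_7$ with two members of $S$, so none of them invokes $v_1$, and they transfer without change. Consequently $S$ is a transitive tournament in $G$, and after relabelling we may suppose $v_2v_6,\, v_2v_4,\, v_2v_9,\, v_6v_4,\, v_6v_9,\, v_4v_9\in E$.

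Step 2: loop through the four possible circular catch orderings of $S$ from Lemma \ref{c9}. Three of them, together with two of the three sub-cases of the ordering $v_2\prec v_6\prec v_4\prec v_9$, are handled verbatim by the original arguments, since those only use $v_3, v_5, v_7, v_8$ and never $v_1$. What remains is the single sub-case in which Lemma \ref{c9} explicitly exploited $v_1$: ordering $v_2\prec v_6\prec v_4\prec v_9$ with $I_2$ covering $p_6,p_4,p_9$ clockwise (plus its symmetric mirror inside the ordering $v_2\prec v_9\prec v_4\prec v_6$).

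Step 3, the main obstacle: rederive a contradiction for this remaining sub-case without invoking $v_1$. Here I would replace the former ``$v_1$--$v_7$--$v_8$'' chain by a direct analysis using $v_7$ and $v_8$. Because $v_7\sim v_2,v_3,v_4,v_5,v_9$ but $v_7\not\sim v_6,v_8$, the nonadjacency with $v_6$ together with the adjacency with $v_9$ pins $p_7$ (and then $I_7$) to a short stretch forced by $I_2$ and $I_6$; a symmetric analysis for $v_8$, using its adjacency with $v_6$ and its nonadjacencies with $v_7$ and $v_9$, pins $p_8$ and $I_8$ analogously. The plan is then to show, by the rigidity of the circular ones pattern under the oriented constraint, that the forced placements must produce $p_7\in I_8$ or $p_8\in I_7$, contradicting $v_7\not\sim v_8$. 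The anticipated difficulty is precisely what makes the corresponding sub-case of Lemma \ref{c9} lengthy: several subordinate placements of $p_3,p_5,p_7,p_8$ have to be ruled out individually before the decisive overlap is extracted.
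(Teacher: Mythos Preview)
Your high-level plan matches the paper's one-sentence argument (``the proof of Lemma~\ref{c9} never needs all nine vertices at once''), and you are right to try to make that assertion precise by fixing $v=v_1$ and auditing which cases of the Lemma~\ref{c9} proof actually touch $v_1$.

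However, your Step~1 contains a concrete error. You assert that each of the four ``directed 3-cycle'' sub-arguments derives its contradiction from some $v_3,v_5,v_7$ and hence never invokes $v_1$. Look again at the paper's treatment of the triple $\{v_2,v_6,v_9\}$: the very first move is ``as $v_1$ is nonadjacent to $v_2,v_9$, $p_1\in I_6$,'' and the contradiction obtained is that $v_7$ and $v_1$ become nonadjacent. So this sub-case uses $v_1$ essentially. Worse, $v_1$ is the \emph{only} vertex of $\overline{C_9}$ that is nonadjacent (in $U(G)$) to two members of $\{v_2,v_6,v_9\}$: the non-neighbours of $v_2$ are $v_1,v_3$, of $v_6$ are $v_5,v_7$, of $v_9$ are $v_8,v_1$, and these three pairs meet only in $v_1$. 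Hence the standard ``pin a point into a single $I_j$ and then separate'' scheme cannot be run for this triple without $v_1$; the case needs exactly the kind of ad hoc workaround you sketched for Step~3, and you did not budget for it. (Note also that $v_7$ is nonadjacent to only one member of $S$, namely $v_6$, so your description of the mechanism in Step~1 is inaccurate even for the cases that do go through.)

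In short, the approach is the paper's, but your audit of which sub-cases require $v_1$ is incomplete: besides the clockwise sub-case of the ordering $v_2\prec v_6\prec v_4\prec v_9$ (and its mirror) that you flagged, the directed-cycle case on $\{v_2,v_6,v_9\}$ must also be redone from scratch without $v_1$.
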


\begin{proof}
The result follows directly from proof of Lemma \ref{c9} as we do not need nine vertices to get contradiction in any of the cases. 
\end{proof}

\begin{thm}
Let $G$ be an oriented circular-arc catch digraph. Then $U(G)$ does not contain any $\overline{C_{n}}$ for $n>7$ as an induced subgraph. 
\end{thm}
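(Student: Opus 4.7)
The plan is to leverage the two preceding lemmas directly. Lemma \ref{c9} already rules out $\overline{C_n}$ as an induced subgraph of $U(G)$ for $n=8,9$, so the remaining task is to handle $n\geq 10$, and the natural route is to reduce each such case to the vertex-deleted configuration forbidden by Lemma \ref{c10}.

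First I would make the structural observation that, for any vertex $v$ of $\overline{C_9}$, the graph $\overline{C_9}\setminus\{v\}$ is isomorphic to $\overline{P_8}$, the complement of the path on eight vertices. This holds because $C_9$ minus one vertex is $P_8$, and complementation commutes with the operation of taking induced subgraphs. Hence showing that $U(G)$ contains no induced $\overline{C_n}$ for $n\geq 10$ reduces to showing that any such $\overline{C_n}$ contains an induced copy of $\overline{P_8}$.

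Next I would exhibit this induced copy explicitly. Take any eight cyclically consecutive vertices $v_{i},v_{i+1},\ldots,v_{i+7}$ of $C_n$. Since $n\geq 10$, the endpoints $v_i$ and $v_{i+7}$ are separated in $C_n$ by at least $n-7\geq 3$ cycle edges, so no edge of $C_n$ joins them or joins any non-adjacent pair among the eight. The induced subgraph on $\{v_i,\ldots,v_{i+7}\}$ in $C_n$ is therefore precisely $P_8$, and passing to complements produces an induced $\overline{P_8}\cong \overline{C_9}\setminus\{v\}$ inside $\overline{C_n}$.

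Putting the pieces together: if $U(G)$ contained an induced $\overline{C_n}$ with $n\geq 10$, it would contain an induced $\overline{C_9}\setminus\{v\}$, contradicting Lemma \ref{c10}; combined with Lemma \ref{c9} for $n=8,9$, this finishes all cases $n>7$. There is no genuine obstacle beyond recognizing the isomorphism $\overline{C_9}\setminus\{v\}\cong\overline{P_8}$ and choosing eight consecutive vertices; the analytic heavy lifting has already been done in the earlier lemmas, and the present step is a short combinatorial embedding argument.
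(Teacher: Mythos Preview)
Your proposal is correct and follows essentially the same route as the paper: choose eight cyclically consecutive vertices of $C_n$, observe that they induce $P_8$ in $C_n$ and hence $\overline{P_8}\cong\overline{C_9}\setminus\{v\}$ in $\overline{C_n}$, and then invoke Lemmas~\ref{c9} and~\ref{c10}. Your version is in fact slightly more careful than the paper's, since you separate out $n=8,9$ (handled directly by Lemma~\ref{c9}) from $n\geq 10$ (handled by the embedding and Lemma~\ref{c10}), whereas the paper's brief proof glosses over this distinction.
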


\begin{proof}
As $G=(V,E)$ is a circular-arc catch digraph, there must be an ordering (say $\sigma$) of vertices of $V$ satisfying Theorem \ref{cop1}. On the contrary let us assume $U(G)$ contains $\overline{C_{n}}, n>7$ as induced subgraph. Let $v_{1},\hdots, v_{n}$ denote the vertices occurred in circularly consecutive way (clockwise or anticlockwise) in $C_{n}$. We choose eight consecutive vertices from them. Then the subgraph induced by these vertices in $U(G)$ is same as $\overline{C_{9}}\setminus \{v\}$ where $v$ is any vertex of $\overline{C_{9}}$.  Rest of the proof follows from Lemma \ref{c9} and Lemma \ref{c10}.  
\end{proof}

\begin{defn}\label{outin}
Let $G=(V,E)$ be a simple digraph. Then $A \subseteq V$ is said to form an {\em out-fountain} if there exists a total order ($\prec$ say) of the vertices of $A$ such that for each $v_i\prec v_j,$ $v_{i}v_{j}\in E$ (it is referred as transitive tournament in \cite{Bang}). Similarly $B\subseteq V$ is said to form an {\em in-fountain} if there exists a total order of the vertices of $B$ such that for each $v_{j}\prec v_{i},$ $v_{i}v_{j}\in E$. A set is said to be a {\em fountain}\index{fountain} if and only if it is either an {\em out-fountain} or an {\em in-fountain}. Further we denote the first element by $v_{f}$ and the last by $v_{l}$ of any fountain. Note that if we reverse the order of an out-fountain, it becomes an in-fountain and vice versa. Moreover any fountain must be a tournament. 
\end{defn}



\begin{lem}\label{Out}
Let $G=(V,E)$ be an proper oriented ICD which is a tournament. Then the set of all vertices of $G$, i.e., $V(G)$ forms an out-fountain in $G$.
\end{lem}

\begin{proof}
As $G$ is a proper oriented ICD, it must possesses a proper oriented ICD ordering (say $P$) for the vertices of it from Theorem \ref{poicd1}. 
\vspace{0.2em}

\noindent Let $v_{i}<v_{j}<v_{k}$ in $P$ such that $v_{i}v_{j}\in E$. We show $v_{i}v_{k}, v_{j}v_{k}\in E$. If not, then $v_{k}v_{i}\in E$ as $G$ is a tournament, which imply $v_{j}v_{i}\in E$ as $P$ is a proper oriented ICD ordering. Hence contradiction arises as $v_{i}v_{j}\in E$ and $G$ is oriented. Therefore $v_{i}v_{k}\in E$ and hence $v_{j}v_{k}\in E$. Similarly one can show that if $v_{j}v_{i}\in E$ then $v_{k}v_{i},v_{k}v_{j}\in E$. 

\vspace{0.2em}
\noindent Therefore if $P$ is $v_{1}<v_{2}<\hdots<v_{n}$ and $v_{1}v_{2}\in E$ we take the ordering $P$ and if $v_{2}v_{1}\in E$ then we take the reverse ordering of $P$. Now from above verification it is clear now that $V(G)$ forms an out-fountain with respect to $P$ or reverse of it in $G$.
\end{proof}

\begin{figure}\centering 
\includegraphics[scale=0.7]{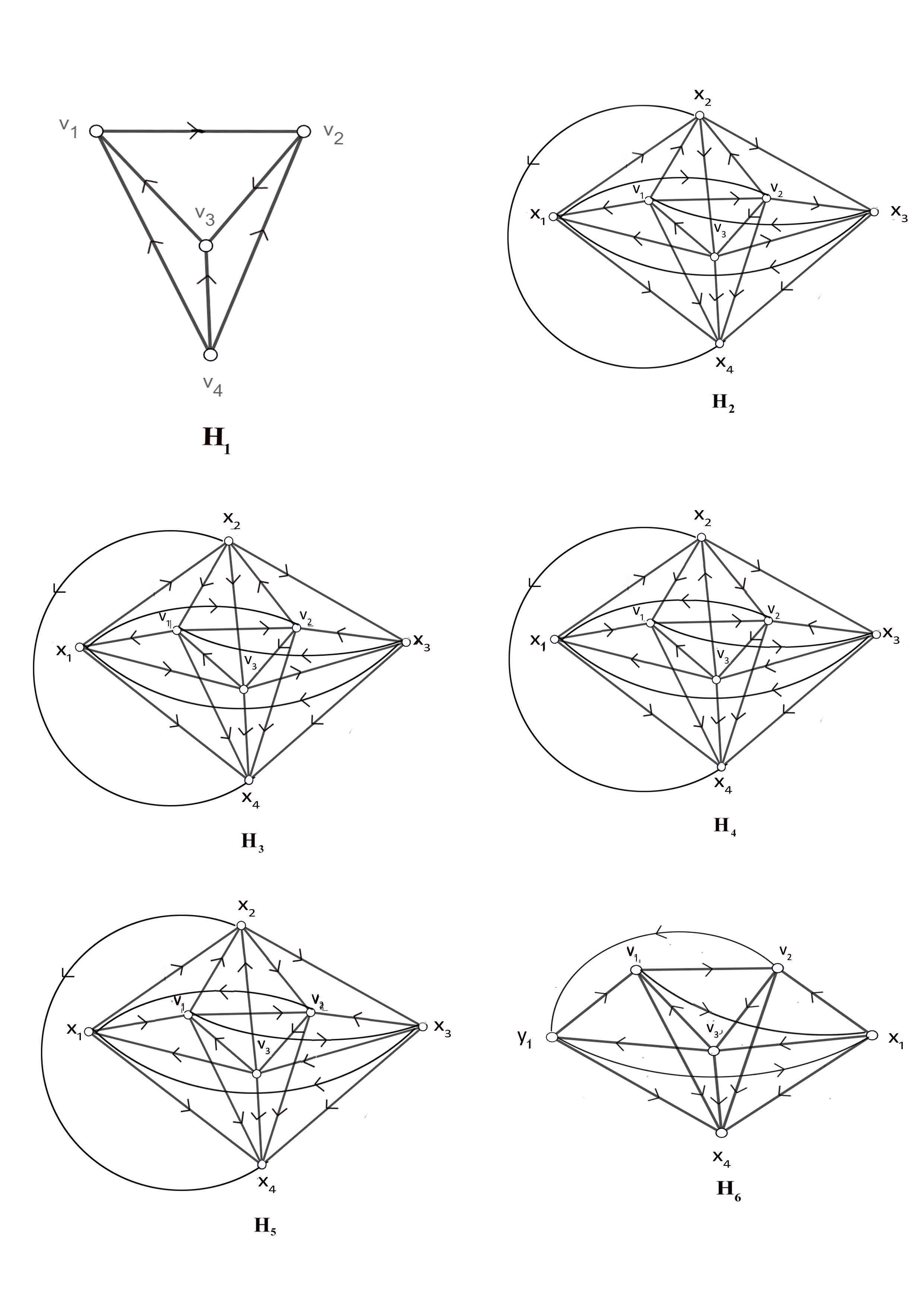}
\caption{Forbidden digraphs of an oriented circular-arc catch digraph which is a tournament}\label{forbid6}
\end{figure}

\begin{thm}\label{d3cycle}
Let $G=(V,E)$ be an oriented graph which is a tournament. Then $G$ is an interval catch digraph if and only if $G$ does not contain any directed $3$-cycle as induced subdigraph.
\end{thm}

\begin{proof}
If $G$ is an interval catch digraph, then from Theorem \ref{dag} it follows that $G$ is a directed acyclic graph. Therefore $G$ can not contain any directed $3$-cycle as induced subdigraph.

\vspace{0.6em}
\noindent Conversely, let $G$ does not contain any directed $3$-cycle as induced subdigraph. Then each $3$-cycle of $U(G)$ is transitively oriented in $G$. To prove $G$ is an interval catch digraph, it is sufficient to show that $G$ is diasteroidal triple-free. Then from Theorem \ref{diate} the result will follow. Now on the contrary, let $\{x,y,z\}$ be a diasteroidal triple in $G$. As $G$ is a tournament, $\{x,y,z\}$ must form a transitively oriented $3$-cycle $C$ in $G$
and hence it must contain one vertex (say $x$) of out-degree $2$ in $C$. Now from the definition of diasteroidal triple there must exist a chain $P=(x,x_{1},\hdots,x_{n-1}, y)$ in $G$ between $x,y$ which is avoided by $z$. Now as $xz\in E$ and $z$ is avoiding the path $P$, $x_{1}x,zx_{1}\in E$. Therefore $\{z,x_{1},x\}$ form a directed $3$-cycle in $G$ which contradicts our assumption. Hence the result follows.
\end{proof}

\noindent Below we characterize those oriented circular-arc catch digraphs which are tournaments, in terms of forbidden subdigraphs.

\begin{thm}
Let $G=(V,E)$ be an oriented graph which is a tournament. Then $G$ is a circular-arc catch digraph if and only if it does not contain $H_{1}, H_{2}, H_{3}, H_{4}, H_{5}, H_{6}$ (cf. Figure \ref{forbid6}) as induced subdigraph. 
\end{thm}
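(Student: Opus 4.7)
The plan is to handle necessity and sufficiency separately. For necessity, I would argue by contradiction: assuming that some tournament $G$ admitting a circular-arc catch representation $\{(I_v,p_v)\mid v\in V\}$ contains one of $D_3,D_4,D_5,D_6,D_7$ as an induced subdigraph, I would derive an inconsistency among the positions of the arcs and points corresponding to the vertices of that $D_i$. Since the tournament property forces every pair of vertices in $D_i$ to be adjacent in the underlying graph, each pair yields a strong geometric constraint: either $p_v\in I_u$ or $p_u\in I_v$ (but not both, by orientedness). Any directed $3$-cycle inside $D_i$ forces the three corresponding arcs to cover the whole circle, exactly as in the opening paragraph of the proof of Lemma \ref{c9}. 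Working through each $D_i$ with this toolkit and the circular catch ordering guaranteed by Theorem \ref{cop1}, I expect to pin down the positions of all relevant arcs up to symmetry and then locate a forced adjacency that must fail.

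For sufficiency, the plan is to take a tournament $G$ with no induced $D_i$ and construct a circular catch ordering on $V(G)$; by Theorem \ref{cop1} this gives a circular-arc catch representation. I would proceed by induction on $|V|$. When $G$ contains a directed Hamilton cycle, that cycle itself dictates the cyclic order of the points $p_1,\ldots,p_n$ around the circle, and each interval $I_i$ is read off from the outset of $v_i$ in this cyclic order. When no directed Hamilton cycle exists, $G$ must contain a vertex $v$ of outdegree zero by Lemma \ref{unilateral} (a tournament is automatically unilateral). Deleting $v$ yields a smaller tournament $G-v$ still free of the forbidden subdigraphs (since these are closed under taking induced subdigraphs), so by induction $G-v$ admits a circular catch ordering $v_1,\ldots,v_{n-1}$. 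I would then reinsert $v$ into this cyclic order at the position dictated by its inset $N^-(v)$, namely, between the ``last'' and ``first'' elements of that inset in the cyclic order.

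The main obstacle will be the insertion step: one must verify that $N^-(v)$ appears as a contiguous arc of the existing circular ordering, so that placing $v$ between its endpoints extends the circular ones property to the new row and column of $A^*(G)$. The key claim is that if $N^-(v)$ fails to be contiguous in the cyclic order coming from $G-v$, then a small set of vertices witnessing the non-consecutiveness, together with $v$, induces one of $D_3,\ldots,D_7$ in $G$. I would handle this by classifying, for each possible pattern of ``holes'' inside $N^-(v)$ (the number of holes, their lengths, and how they interact with the cyclic tournament structure inherited from $G-v$), the minimal vertex set needed to witness the obstruction, and then showing that the resulting induced subdigraph must coincide with one of the listed forbidden digraphs. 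The case analysis, rather than any single calculation, is the hard part; the arc-covering arguments from Lemma \ref{c9} and the structural constraints on oriented catch representations provide the main technical leverage.
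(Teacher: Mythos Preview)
Your necessity sketch matches the paper's approach and should go through with case-by-case work.

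The sufficiency direction, however, has a genuine gap and also diverges substantially from the paper. Your invocation of Lemma \ref{unilateral} is circular: that lemma assumes $G$ is already an oriented circular-arc catch digraph, which is precisely what you are trying to establish. So you cannot deduce the existence of an outdegree-zero vertex from it. You would need an independent argument that a tournament avoiding $D_{3},\ldots,D_{7}$ with no directed Hamilton cycle has such a vertex, and it is not clear this is even true (nor do you need it once you follow the paper's route). Moreover, your Hamilton-cycle case is underspecified: a tournament can have many directed Hamilton cycles, and there is no a priori reason the out-neighbourhoods are circular intervals in the order dictated by any one of them; you would still have to use the forbidden subdigraphs heavily to force this, and you give no indication how.

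The paper's dichotomy is different and more tractable: either $G$ contains a directed $3$-cycle or it does not. If it does not, every triangle is transitive, so $G$ is a transitive tournament; a short argument then shows $G$ is diasteroidal-triple free and hence an interval catch digraph by Theorem \ref{diate}. If $G$ does contain a directed $3$-cycle $\vec C=(v_{1},v_{2},v_{3})$, the three arcs $I_{1},I_{2},I_{3}$ cover the circle, and every other vertex is classified by its in/out pattern against $\vec C$ into one of seven sets $S_{i}',S_{i}'',S_{4}$. The forbidden subdigraphs $D_{3},\ldots,D_{7}$ translate directly into emptiness and ordering constraints among these sets (e.g.\ $D_{3}$ forbidden means every vertex has at least one in-neighbour in $\vec C$; $D_{4},D_{5}$ forbidden force one $S_{i}'$ and one $S_{i}''$ to be empty when $S_{4}\neq\emptyset$; $D_{6},D_{7}$ forbidden pin down which pair $\{S_{i}'',S_{i+1}'\}$ vanishes). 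These constraints let one insert the remaining points one by one into the three arcs $[p_{1},p_{2}],[p_{2},p_{3}],[p_{3},p_{1}]$ so that circular ones along rows is preserved. The induction is thus anchored on a fixed $3$-cycle rather than on deleting an outdegree-zero vertex, and the forbidden patterns are used to control placement relative to $\vec C$, not to detect ``holes'' in an inset.
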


\begin{proof}
\textit{Necessity:} Let $G$ be a circular-arc catch digraph with representation
 $\{(I_{v},p_{v})|v\in V\}$ where $I_{v}=[a_{v},b_{v}]$ is the circular-arc and $p_{v}$ is a point within $I_{v}$ attached to the vertex $v$. On the contrary let us assume that $G$ contains any of the digraphs of Figure \ref{forbid6} as induced subdigraph. It is easy to observe that $\vec{C}=\{v_{1},v_{2},v_{3}\}$ forms a directed $3$-cycle in each of the above digraphs. Hence from Lemma \ref{3cycle}, $I_{v_{1}}\cup I_{v_{2}}\cup I_{v_{3}}$ must cover the whole circle. Hence any point $p_{v}$ for $v\in V\setminus V(\vec{C})$ either belong to $I_{v_{1}}$ or $I_{v_{2}}$ or $I_{v_{3}}$. Without loss of generality we assume $v_{1}<_{\sigma} v_{2}<_{\sigma} v_{3}$ and therefore $p_{v_1}< p_{v_2}<p_{v_3}$ where $\sigma$ is a circular catch ordering of $V$. 

\vspace{0.7em}
\begin{itemize}

\item \noindent In $H_{1}$ as $v_{4}v_{1},v_{4}v_{2},v_{4}v_{3}\in E$ and $G$ is oriented, $v_{1}v_{4},v_{2}v_{4},v_{3}v_{4}\notin E$. Therefore $p_{v_{4}}\notin I_{v_{1}}$, $I_{v_{2}}$ and $I_{v_{3}}$. But this is impossible from above. 

\vspace{0.7em}

\item \noindent Again note that in $H_{2}, H_{3}$, $x_{4}$ has all the vertices of $\vec{C}$ as its in-neighbor. Therefore $p_{x_{4}}\in I_{v_1}\cap I_{v_2}\cap I_{v_3}$. Without loss of generality we assume $p_{x_4}$ to be placed within the arc
$(p_{v_1},p_{v_2})$.

\vspace{0.7em}

\noindent In $H_{2}$, $x_{2}v_{3}\in E$ imply $v_{3}x_{2}\notin E$, which imply $p_{x_{2}}\notin I_{v_3}$. Therefore  $p_{x_{2}}\notin(p_{v_3},p_{v_1})$. Again as $v_{1}x_{2},v_{2}x_{2}\in E$, $p_{x_{2}}\in I_{v_1}\cap I_{v_2}\subset (p_{v_1},p_{v_3})$. Now if $p_{x_{2}}\in(p_{v_1},p_{v_2})$ then $I_{x_{2}}\subset (p_{v_1},p_{v_2})$ as $H_{2}$ is oriented. Therefore $x_{2}v_{3}\notin E$. Again when $p_{x_{2}}\in(p_{v_2},p_{v_3})$ then $I_{x_{2}}\subset (p_{v_2},p_{v_1})$ and hence $x_{2}x_{4}\notin E$ as $p_{x_{4}}\in (p_{v_{1}},p_{v_{2}})$ and $H_{2}$ is oriented. Hence contradiction arises in both cases.

\vspace{0.7em}

\noindent Next in $H_{3}$ as $v_{1}x_{1}\in E$, $p_{x_{1}}\in I_{v_1}$. Again as $x_{1}v_{2},x_{1}v_{3}\in E$, $v_{2}x_{1},v_{3}x_{1}\notin E$ which imply $p_{x_{1}}\notin I_{v_2}, I_{v_3}$ as $H_{3}$ is oriented. In particular $p_{x_{1}}\in(p_{v_1},p_{v_2})$. But this situation can not happen, as in that case either $p_{x_{1}}\in I_{v_{1}}\cap I_{v_{2}}$ or $I_{v_{1}}\cap I_{v_{2}}\cap I_{v_{3}}$ or $I_{v_{1}}\cap I_{v_{3}}$ due to the fact 
$p_{x_{4}}\in(p_{v_1},p_{v_2})$.

\vspace{0.7em}

\noindent As $x_{1},x_{2},x_{3}$ has same degree nature (in-degree $2$, out-degree $1$ in $H_{2}$ and in-degree $1$, out-degree $2$ in $H_{3}$) with vertices of  
$\vec{C}$ and $x_{i}x_{4}\in E$ for all $i\in\{1,2,3\}$, if we place $p_{x_{4}}$ within $(p_{v_2},p_{v_3})$ or $(p_{v_3},p_{v_1})$ we get similar contradiction.

\vspace{0.7em}

\item \noindent Next in $H_{4}$ as $v_{2}x_{1},v_{3}x_{1}\in E$ and $v_{1}x_{1}\notin E$, $p_{x_{1}}\in I_{v_2}\cap I_{v_3}\subset (p_{v_2},p_{v_1})$. Now if $p_{x_{1}}\in(p_{v_2},p_{v_3})$ then $I_{x_{1}}\subset (p_{v_2},p_{v_3})$ as $H_{4}$ is oriented. Therefore $x_{1}v_{1}\notin E$. Hence                                                                        $p_{x_{1}}\in (p_{v_3},p_{v_1})$.

\vspace{0.2em}

\noindent  Again $v_{3}x_{2}\in E$ imply $p_{x_{2}}\in I_{v_3}$.
As $x_{2}v_{1},x_{2}v_{2}\in E$, $v_{1}x_{2},v_{2}x_{2}\notin E$. This imply $p_{x_{2}}\notin I_{v_1}, I_{v_2}$. In particular $p_{x_{2}}\in (p_{v_3},p_{v_1})$. From the adjacency of $x_{1},x_{2}$ it is clear now that $p_{x_{1}}$ occurs prior (anticlockwise) to $p_{x_{2}}$ within the arc $(p_{v_3},p_{v_1})$. Again $v_{1}x_{3},x_{3}v_{2},x_{3}v_{3}\in E$ imply $p_{x_{3}}\in I_{v_1}$ but $p_{x_{3}}\notin I_{v_2}, I_{v_3}$ as $H_{4}$ is oriented. Therefore $p_{x_{3}}\in (p_{v_1},p_{v_2})$.

\vspace{0.2em}
\noindent Hence from the positions of $p_{x_{2}},p_{x_{3}}$ as placed in the above paragraph, it is clear that $p_{x_{4}}$ can not occur within $(p_{v_3},p_{v_1})$ or $(p_{v_1},p_{v_2})$ as $p_{x_{4}}\in I_{v_1}\cap I_{v_2}\cap I_{v_3}$. Therefore $p_{x_{4}}$ must belong to $(p_{v_2},p_{v_3})$. But this is also not possible as in that case $x_{1}x_{4}\notin E$
as $p_{x_{1}}\in I_{v_2}\cap I_{v_3}\cap (p_{v_3},p_{v_1})$ and $H_{4}$ is oriented.

\vspace{0.45em}
\item \noindent Again in $H_{5}$, $v_{1}x_{3}\in E$ imply $p_{x_{3}}\in I_{v_1}$. Again $x_{3}v_{2},x_{3}v_{3}\in E$ imply $v_{2}x_{3},v_{3}x_{3}\notin E$ as $H_{5}$ is oriented. Hence $p_{x_{3}}\notin I_{v_2},I_{v_3}$. In particular $p_{x_{3}}\in (p_{v_1},p_{v_2})$. From the adjacency of $x_{3}$, it is clear that $p_{x_{4}}\notin (p_{v_1},p_{v_2})$ as $x_{4}\in I_{v_1}\cap I_{v_2}\cap I_{v_3}$. Therefore $p_{x_{4}}\in (p_{v_2},p_{v_3})$ or $(p_{v_3},p_{v_1})$.

\vspace{0.2em}

\noindent Again $v_{1}x_{2},v_{3}x_{2}\in E$ and $v_{2}x_{2}\notin E$, imply $p_{x_{2}}\in I_{v_1}\cap I_{v_3}\subset (p_{v_3},p_{v_2})$. If $p_{x_{2}}\in (p_{v_3},p_{v_1})$ then $x_{2}v_{2}\notin E$ as $I_{x_{2}}\subset (p_{v_3},p_{v_1})$ due to the fact $H_{5}$ is oriented. Therefore $p_{x_{2}}\in (p_{v_1},p_{v_2})$. Using similar type argument one can show that $p_{x_{1}}\in (p_{v_3},p_{v_1})$ as $v_{2}x_{1},v_{3}x_{1},x_{1}v_{1}\in E$. 

\vspace{0.2em}

\noindent  Now if $p_{x_{4}}\in (p_{v_2},p_{v_3})$ then $x_{1}x_{4}\notin E$ and if $p_{x_{4}}\in (p_{v_3},p_{v_1})$ then $x_{2}x_{3}\notin E$ as $H_{5}$ is oriented. Hence in both of these cases contradiction arises.

\vspace{0.45em}

\item \noindent Now in $H_{6}$ as $v_{1}x_{1}\in E$, $p_{x_{1}}\in I_{v_1}$. Again $x_{1}v_{2},x_{1}v_{3}\in E$ imply $v_{2}x_{1},v_{3}x_{1}\notin E$, which imply $p_{x_{1}}\notin I_{v_2}, I_{v_3}$. More specifically $p_{x_{1}}\in (p_{v_1},p_{v_2})$. Therefore $p_{x_{4}}\notin (p_{v_1},p_{v_2})$ as $x_{4}\in I_{v_1}\cap I_{v_2}\cap I_{v_3}$ follows from the adjacency of $x_{4}$.

\vspace{0.2em}

\noindent Again $v_{2}y_{1}, v_{3}y_{1}\in E$ imply $p_{y_{1}}\in I_{v_2}\cap I_{v_3}\subset (p_{v_2},p_{v_1})$. Now if $p_{y_{1}}\in (p_{v_2},p_{v_3})$, then $I_{y_{1}}\subset (p_{v_2},p_{v_3})$ as $H_{6}$ is oriented. But in that case $y_{1}x_{1}\notin E$. Therefore $p_{y_{1}}\in (p_{v_3},p_{v_1})$. 

\vspace{0.2em}

\noindent Now if $p_{x_{4}}\in (p_{v_2},p_{v_3})$, then
$y_{1}x_{4}\notin E$. Hence $p_{x_{4}}\in (p_{v_3},p_{v_1})$. It is evident from the adjacency of $y_{1},x_{4}$ that $p_{y_{1}}$ occurs prior (anticlockwise) to $p_{x_{4}}$ within $(p_{v_3},p_{v_1})$. 
Now as $x_{1}x_{4}\in E$, $x_{1}y_{1}$ forms a directed edge in $G$.
But this is not true as $y_{1}x_{1}\in E$ and $H_{6}$ is oriented.

 \end{itemize}
    
\vspace{0.7em}

\noindent Hence $G$ can not contain any of the digraphs of Figure \ref{forbid6} as induced subdigraph.

\vspace{0.7em}
\begin{figure} \centering
\includegraphics[scale=0.52]{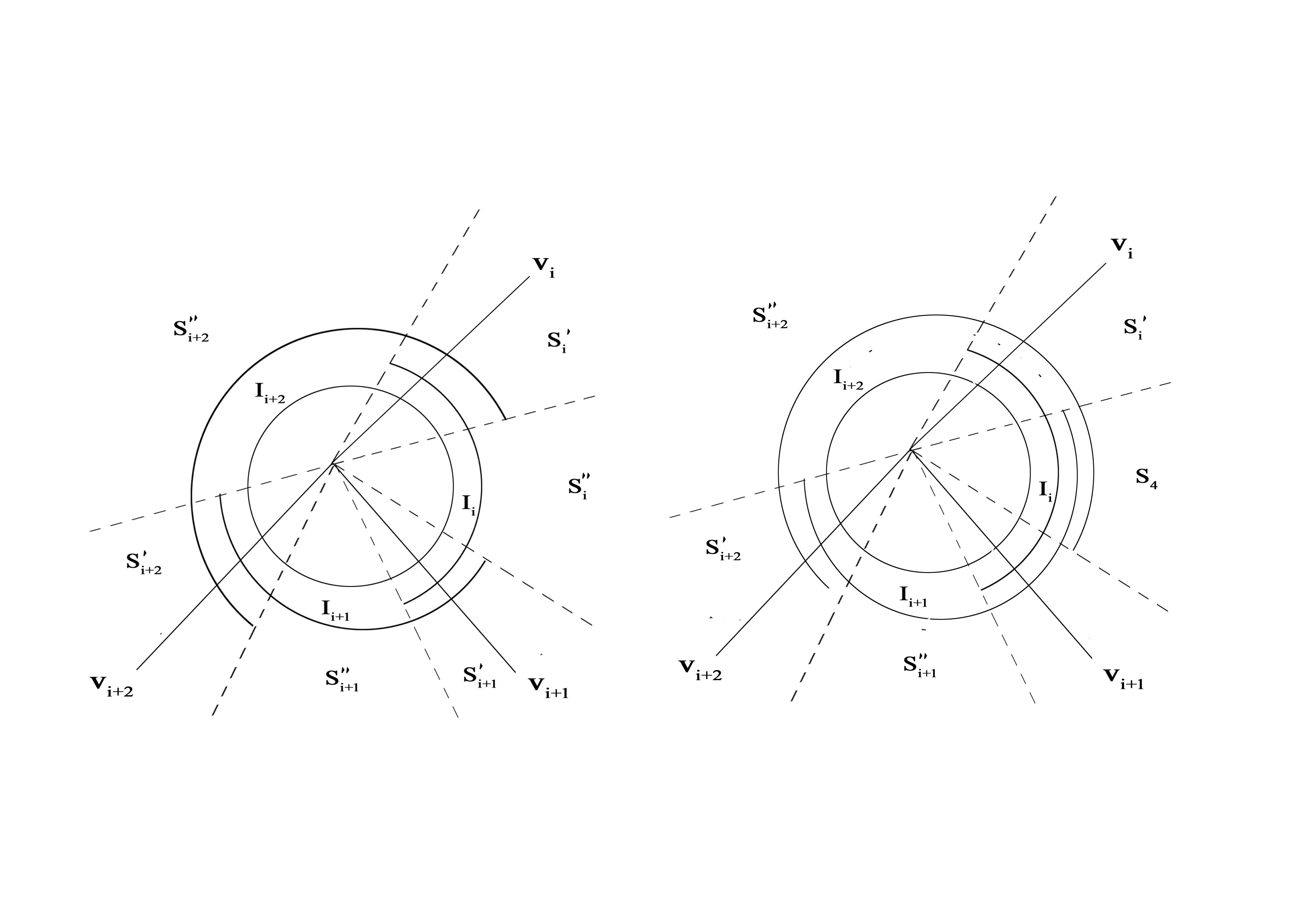}
\caption{Positions of the vertices in $V_{S_{P}}$ when $S_{4}=\emptyset$ (left), when $S_{4}\neq \emptyset$ and $S_{i}^{''}=S^{'}_{i+1}=\emptyset$ (right)}\label{cr}

\includegraphics[scale=0.477]{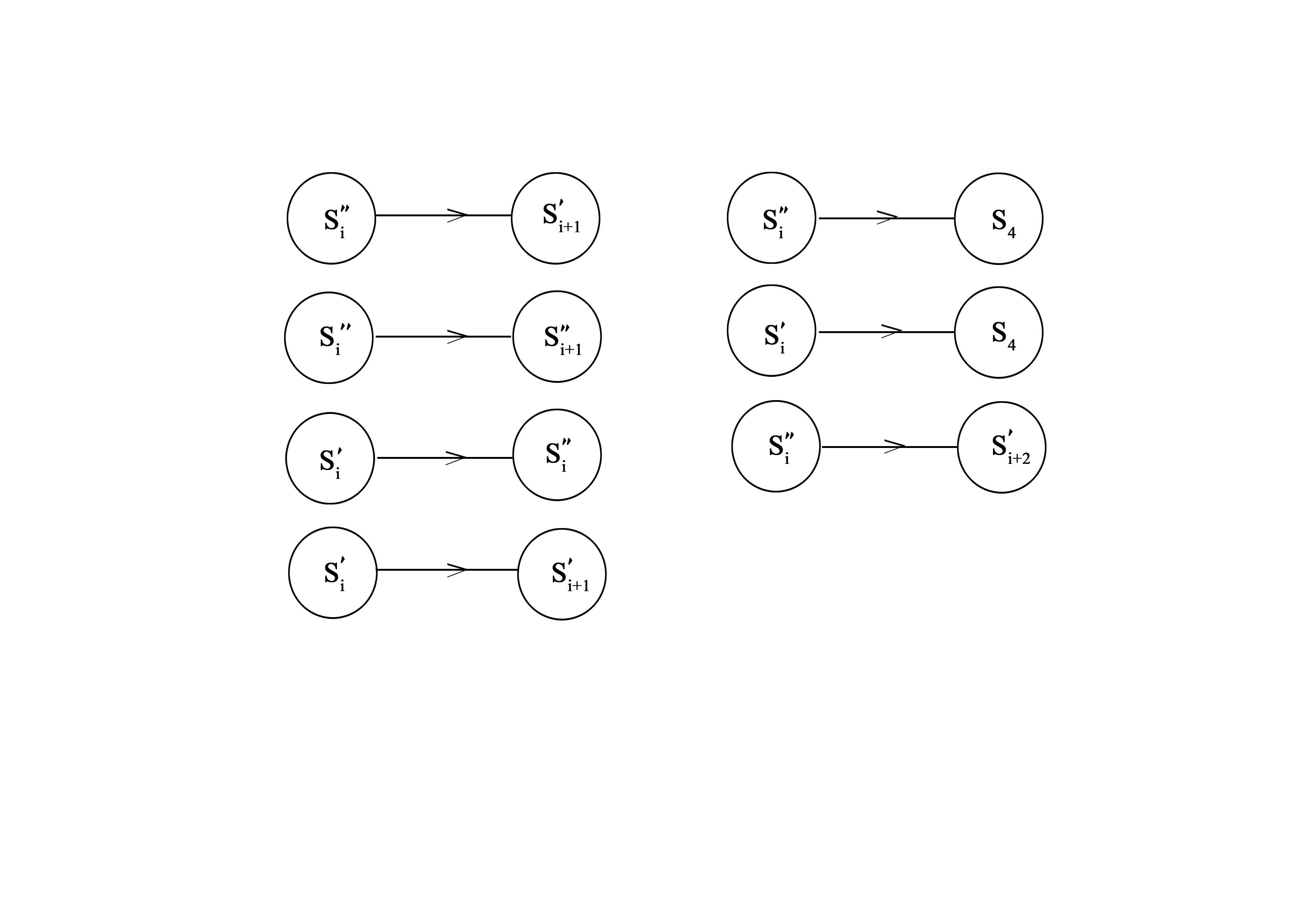}
\caption{Adjacency relations between vertices of $S^{'}_{i}, S^{''}_{i}$ (left), $S_{4},S^{'}_{i}, 
S^{''}_{i}$ (when $S_{4}\neq \emptyset$) (right)}\label{tr}                                
\end{figure}


\noindent \textit{Sufficiency:} Conversely, let $G$ does not contain $H_{1}, H_{2}, H_{3}, H_{4}, H_{5}, H_{6}$ as induced subdigraph. Now two cases may arise.

\vspace{0.7em}
\noindent \textbf{Case (i)}
\vspace{0.7em}

\noindent  Let $G$ contains a directed $3$-cycle $\vec{C}=(v_{1},v_{2},v_{3})$ with arcs $v_{1}v_{2},v_{2}v_{3},v_{3}v_{1}$ from $E$. Therefore from Lemma \ref{3cycle},  $I_{v_{1}}\cup I_{v_{2}}\cup I_{v_{3}}$ covers the whole circle. As $H_{1}$ can not be an induced subdigraph of $G$ each vertex of $V\setminus V(\vec{C})$ must possess at least one in-neighbor from $\vec{C}$.

\vspace{0.7em}
\noindent We consider $\{v_{i},v_{i+1}, v_{i+2}\}$ as a cyclic permutation \footnote{cyclic permutations of the set $\{1,2,3\}$ are $(1,2,3),(2,3,1),(3,1,2)$} of $\vec{C}=\{v_{1},v_{2},v_{3}\}$. To simplify the notion we often use $\{v_{i},v_{i+1}, v_{i+2}\}$ to denote $\vec{C}=\{v_{1},v_{2},v_{3}\}$. 
Let $S_{i}^{'}=\{v|v_{i}v,v_{i+2}v,vv_{i+1}\in E\}$, $S_{i}^{''}=\{v|v_{i}v,vv_{i+1},vv_{i+2}\in E\}$ for $i=1,2,3$ and $S_{4}=\{v|v_{1}v,v_{2}v,v_{3}v\in E\}$. It is important to note that all vertices which are coming from $V\setminus V(\vec{C})$ must belong to one of these above sets. Below we obtain some important Lemmas (see Figure \ref{tr} as reference) which will lead us to the main proof.
 
 \vspace{0.7em} 
\begin{lem}\label{la}
Let $x_{i}\in S_{i}^{''}$. Then for any $y_{i+1}\in S_{i+1}^{'}$, $x_{i}y_{i+1}\in E$.  Again for any $x_{i+1}\in S_{i+1}^{''}$, $x_{i}x_{i+1}\in E$.
\end{lem}
  
\begin{proof} 
If not, then $y_{i+1}x_{i}\in E$ imply existence of $H_{1}$, formed by the vertices $\{y_{i+1},x_{i},v_{i+1},v_{i}\}$, which contradicts our assumption. Again if $x_{i+1}x_{i}\in E$ then $H_{1}$ get induced by the vertices $\{x_{i},v_{i+2},v_{i},x_{i+1}\}$ in $G$ which contradicts our assumption.
\end{proof}

\begin{lem}\label{lb}
Let $y_{i}\in S_{i}^{'}$. Then for any $x_{i}\in S_{i}^{''}$, $y_{i}x_{i}\in E$. Again for any $y_{i+1}\in S_{i+1}^{'}$, $y_{i}y_{i+1}\in E$.
\end{lem}

\begin{proof}
On the contrary, if $x_{i}y_{i}\in E$ then $\{y_{i},v_{i+1},v_{i+2},x_{i}\}$ form $H_{1}$, which can not happen as $H_{1}$ is forbidden in $G$. Again if $y_{i+1}y_{i}\in E$, then $H_{1}$ get induced by the vertices $\{v_{i+1},y_{i+1},y_{i},v_{i}\}$ in $G$ which contradicts our assumption.
\end{proof}

\begin{lem}\label{lc}
Let $x_{i}\in S_{i}^{''}$ and $y_{i+2},z_{i+2}\in S^{'}_{i+2}$. Then if $x_{i}y_{i+2},z_{i+2}x_{i}\in E$ then $y_{i+2}z_{i+2}\in E$.
\end{lem}

\begin{proof}
In other case $H_{1}$ is formed by the vertices $\{v_{i},x_{i},y_{i+2},z_{i+2}\}$, which is a contradiction.
\end{proof}


\begin{lem}\label{ld}
Let $x_{i},z_{i}\in S_{i}^{''}$ and $y_{i+2}\in S^{'}_{i+2}$. Then if $x_{i}y_{i+2},y_{i+2}z_{i}\in E$. Then $z_{i}x_{i}\in E$.
\end{lem}

\begin{proof}
In other case $H_{1}$ is formed by the vertices $\{y_{i+2},z_{i},v_{i+1},x_{i}\}$, which is a contradiction.
\end{proof}

\begin{lem}\label{l11}
Let $S_{4}\neq\emptyset$ then $x_{i}x_{4},y_{i}x_{4}\in E$ for all $x_{i}\in S_{i}^{''}, y_{i}\in S_{i}^{'}$.
\end{lem}

\begin{proof}
On the other hand, if $x_{4}x_{i}\in E$ ($x_{4}y_{i}\in E$) for some $x_{i}\in S_{i}^{''}$ ($y_{i}\in S_{i}^{'}$) then $\{x_{4},x_{i},v_{i+1},v_{i}\}$ ($\{x_{4},y_{i},v_{i+1},v_{i}\}$) induce $H_{1}$, which is a contradiction. 
\end{proof}

\begin{lem}\label{l22}
Let $S_{4}\neq \emptyset$ then $x_{i}y_{i+2}\in E$ for any $x_{i}\in S^{''}_{i}, y_{i+2}\in S^{'}_{i+2}$.
\end{lem}

\begin{proof}
Let $x_{4}\in S_{4}$. Then from Lemma \ref{l11} $x_{i}x_{4},y_{i+2}x_{4}\in E$. Now if on the contrary $y_{i+2}x_{i}\in E$ then $H_{6}$ get formed by the vertices of $\vec{C}$ and $x_{i},y_{i+2},x_{4}$, which is a contradiction.
\end{proof}

\begin{lem}\label{l3}
Let $S_{4}\neq\emptyset$ then $S_{1}^{'}$ or $S_{2}^{'}$ or $S_{3}^{'}$ must be empty.
\end{lem}

\begin{proof}
On the contrary, let $S_{1}^{'}, S_{2}^{'}, S_{3}^{'} \not= \emptyset$. Let $x_{1}\in S_{1}^{'}, x_{2}\in S_{2}^{'}, x_{3}\in S_{3}^{'}$ and $x_{4}\in S_{4}$. Then from Lemma \ref{l11}, $x_{1}x_{4},x_{2}x_{4},x_{3}x_{4}\in E$. Again $x_{1}x_{2}, x_{2}x_{3},x_{3}x_{1}\in E$ from Lemma \ref{lb}. Now one can easily verify that $H_{2}$ is induced in $G$ by vertices 
of $\vec{C}$ and $x_{1},x_{2},x_{3},x_{4}$. Hence contradiction arises. Therefore $S_{1}^{'}=\emptyset$ or $S_{2}^{'}=\emptyset$ or $S_{3}^{'}=\emptyset$.
\end{proof}

\begin{lem}\label{l4}
Let $S_{4}\neq\emptyset$ then at least one of the sets $S_{1}^{''},S_{2}^{''},S_{3}^{''}$ is empty. 
\end{lem}

\begin{proof}
On the contrary, let $S_{1}^{''},S_{2}^{''},S_{3}^{''} \not= \emptyset$. Let $x_{1}\in S_{1}^{''}, x_{2}\in S_{2}^{''}, x_{3}\in S_{3}^{''}$ and $x_{4}\in S_{4}$. Then from Lemma \ref{l11}, $x_{1}x_{4},x_{2}x_{4},x_{3}x_{4}\in E$. Again from Lemma \ref{la} we get $x_{1}x_{2},x_{2}x_{3},x_{3}x_{1}\in E$. Hence it is easy to verify now that $H_{3}$ is induced  by vertices of $\vec{C}$ and $x_{1},x_{2},x_{3},x_{4}$, which contradicts our assumption. Therefore $S_{1}^{''}=\emptyset$ or $S_{2}^{''}=\emptyset$ or $S_{3}^{''}=\emptyset$.
\end{proof}

\begin{lem}\label{l5}
Let $S_{4}\neq \emptyset$. Then there exist some $i\in\{1,2,3\}$ satisfying $S_{i}^{''}=S_{i+1}^{'}=\emptyset$. 
\end{lem}

\begin{proof}
We assume on the contrary that no such $i$ exists. Let $S_{1}^{''}\neq \emptyset$. Now from Lemma \ref{l4} either $S_{2}^{''}=\emptyset$ or $S_{3}^{''}=\emptyset$. When $S_{2}^{''}=\emptyset$ then $S_{3}^{'}\neq\emptyset$ from assumption. Now let, $S_{3}^{''}\neq \emptyset$ and $x_{1}\in S_{3}^{'}, x_{2}\in S^{''}_{3}, x_{3}\in S_{1}^{''}, x_{4}\in S_{4}$. Then $x_{2}x_{3}, x_{1}x_{2}, x_{3}x_{1}\in E$ from Lemma \ref{la}, Lemma \ref{lb} and Lemma \ref{l22} respectively. Again $x_{1}x_{4},x_{2}x_{4},x_{3}x_{4}\in E$ from Lemma \ref{l11}. Hence $H_{4}$ get induced by vertices of $\vec{C}$ and $x_{1},x_{2},x_{3},x_{4}$ in $G$, which is a contradiction. 

\vspace{0.2em}

\noindent Therefore $S_{3}^{''}=\emptyset$ and hence $S_{1}^{'}\neq \emptyset$ from our assumption. Now let, $x_{1}\in S_{3}^{'}, x_{2}\in S_{1}^{'}, x_{3}\in S_{1}^{''}$. Then $x_{1}x_{2}, x_{2}x_{3}\in E$ from Lemma \ref{lb} and  $x_{3}x_{1}\in E$ from Lemma \ref{l22}. Again $x_{1}x_{4},x_{2}x_{4},x_{3}x_{4}\in E$ from Lemma \ref{l11}. Hence $H_{5}$ is formed by vertices of $\vec{C}$ and $x_{1},x_{2},x_{3},x_{4}$ in $G$. Therefore contradiction arises.
Similar contradiction will arise if we start with $S_{3}^{''}=\emptyset$.
\end{proof}

\begin{lem}\label{l6}
Let $S_{4}\neq \emptyset$. Then the induced subdigraph $G_{S_{4}}=(S_{4},E(S_{4}))$ of $G$ is a proper oriented ICD.
Moreover $S_{4}$ forms an out-fountain in $G_{S_{4}}$.
\end{lem}

\begin{proof}
First we show that $G_{S_{4}}$ can not contain any directed $3$-cycle. On the contrary, let the vertices $x_{1},x_{2},x_{3}$ form a directed $3$-cycle in  $G_{S_{4}}$ with arcs $x_{1}x_{2},x_{2}x_{3},x_{3}x_{1}$ belonging to  $E(S_{4})$. Then $\{x_{1},x_{2},x_{3},v_{i}\}$ form $H_{1}$ in $G$, which is a contradiction.
Again as $G_{S_{4}}$ is a tournament, from Theorem \ref{d3cycle} it becomes an oriented ICD. Furthermore from Corollary \ref{tforbid} we can conclude that $G_{S_{4}}$ is a proper oriented ICD. Therefore $S_{4}$ forms an out-fountain in $G_{S_{4}}$ from Lemma \ref{Out}. 
\end{proof}

\begin{lem}\label{l7}
Let $S^{''}_{i}\neq \emptyset$. Then the induced subdigraph $G_{S^{''}_{i}}=(S^{''}_{i}, E(S^{''}_{i}))$ of $G$ is a proper oriented ICD. Moreover $S^{''}_{i}$ forms an out-fountain in $G_{S^{''}_{i}}$.
\end{lem}

\begin{proof}
Similar as Lemma \ref{l6}.
\end{proof}


\begin{lem}\label{l8}
Let $S^{'}_{i}\neq \emptyset$. Then the induced subdigraph $G_{S^{'}_{i}}=(S^{'}_{i}, E(S^{'}_{i}))$ is a proper oriented ICD. Moreover $S^{'}_{i}$ forms an out-fountain in $G_{S^{'}_{i}}$.
\end{lem}

\begin{proof}
Similar as Lemma \ref{l6}.
\end{proof}

\begin{lem}\label{l9}
For each $i\in \{1,2,3\}$, $S^{'}_{i} \cup S^{''}_{i} \cup \{v_{i+1}\} \cup S^{'}_{i+1}$ and $S^{''}_{i} \cup \{v_{i+1}\} \cup S^{'}_{i+1} \cup S^{''}_{i+1}\cup \{v_{i+2}\}$ form out-fountains in $G$.  
\end{lem}

\begin{proof}
As $G_{S^{''}_{i}}$ and $G_{S^{'}_{i}}$ are induced subdigraphs of $G$, it is clear that $S^{''}_{i}$ and $S^{'}_{i}$ form out-fountains in $G$ from Lemma \ref{l7} and Lemma \ref{l8} respectively. Let $\sigma_{S^{'}_{i}}, \sigma_{S^{''}_{i}},\sigma_{S^{'}_{i+1}}$ denote the orderings with respect to which $S^{'}_{i}, S^{''}_{i}, S^{'}_{i+1}$ form out-fountains. We concatenate these orderings together by taking the order $\sigma_{S^{'}_{i}}\prec \sigma_{S^{''}_{i}}\prec v_{i+1}\prec \sigma_{S^{'}_{i+1}}$.
From Lemma \ref{lb} and definition of $S^{'}_{i}$, we get to know that vertices 
of $S^{''}_{i} \cup \{v_{i+1}\} \cup S^{'}_{i+1}$ are out-neighbors of every vertex of $S^{'}_{i}$. Again Lemma \ref{la} and definition of $S^{''}_{i}$ tells us that vertices of $\{v_{i+1}\} \cup S^{'}_{i+1}$ are out-neighbors of every vertex of $S^{''}_{i}$. Clearly $S^{'}_{i+1}$ is an out-neighbor set of the vertex $v_{i+1}$. Therefore $S^{'}_{i} \cup S^{''}_{i} \cup \{v_{i+1}\} \cup S^{'}_{i+1}$ forms an out-fountain.
  
 \vspace{0.2em}
  
\noindent   Similar proof holds for $S^{''}_{i} \cup \{v_{i+1}\} \cup S^{'}_{i+1} \cup S^{''}_{i+1}\cup \{v_{i+2}\}$ by taking the ordering $\sigma_{S^{''}_{i}}\prec v_{i+1}\prec \sigma_{S^{'}_{i+1}}\prec  \sigma_{S^{''}_{i+1}} \prec v_{i+2}$.
\end{proof}

\vspace{0.39em}

\noindent Now we will arrange all the vertices of $V$ in a single circular sequence $V_{S_{P}}$ in such a way so that with respect to this ordering
if we arrange the rows and columns of the augmented adjacency matrix $A(G)$ of $G$, then it satisfies circular $1$'s property along rows of it and therefore from Theorem \ref{cop1}, $G$ becomes a circular-arc catch digraph. 
Below we describe the method of constructing $V_{S_{P}}$.

\vspace{0.7em}
\noindent\textbf{Construction of the vertex circular sequence $V_{S_{P}}$:}

\vspace{0.6	em}

\noindent First we arrange the vertices $v_{1},v_{2},v_{3}$ of $\vec{C}$ clockwise rounding circle, i.e., $v_{1}<_{\sigma} v_{2}<_{\sigma} v_{3}$ in $V_{S_{P}}$ where $<_{\sigma}$ is the clockwise order of vertices around the circle (see Figure \ref{cr}). Next we place the vertices of 
$V\setminus V(\vec{C})$ in $V_{S_{P}}$ in following way according to the cases $S_{4}\neq \emptyset$ or $S_{4}=\emptyset$.

\vspace{0.77em}

\noindent $\mathbf{(i)\hspace{0.3em} S_{4}\neq \emptyset}$

\vspace{0.7em}

\noindent Since $G$ is oriented it is evident from the definition of $S_{4}$ that each vertex of $S_{4}$ has no out-neighbor in $V(\vec{C})$. Moreover from Lemma \ref{l5} we get to know about existence of some $i\in\{1,2,3\}$ satisfying $S^{''}_{i}=S^{'}_{i+1}=\emptyset$. Let $S^{''}_{1}=S^{'}_{2}=\emptyset$. From Lemma \ref{l6}, we get to know that $V(S_{4})$ forms an out-fountain with respect to some ordering (say $\sigma_{S_{4}}$). We place all the vertices of $S_{4}$ clockwise within $(v_{1},v_{2})$ arranged in $\sigma_{S_{4}}$ in $V_{S_{P}}$.

\vspace{1em}

\noindent Next we place the vertices which have exactly one in-neighbor from $V(\vec{C})$. From Lemma \ref{l4} we already know that either $S^{''}_{1}$ or $S^{''}_{2}$ or $S^{''}_{3}=\emptyset$. Note that we have taken $S^{''}_{1}=\emptyset$ in above paragraph. Again from Lemma \ref{l7} we get to know that $V(S^{''}_{2})$, $ V(S^{''}_{3})$ form out-fountains (when they are non empty) with respect to some orderings $\sigma_{S^{''}_{2}}$, $\sigma_{S^{''}_{3}}$  respectively. We place the vertices of $S^{''}_{2}$ clockwise  within $(v_{2},v_{3})$ arranged in $\sigma_{S^{''}_{2}}$ and vertices of $S^{''}_{3}$ clockwise within $(v_{3},v_{1})$ arranged in $\sigma_{S^{''}_{3}}$ in $V_{S_{P}}$.

\vspace{1em}

\noindent Now we place the vertices which have exactly two in-neighbors from $V(\vec{C})$. From Lemma \ref{l3} we get that either $S^{'}_{1}$ or $S^{'}_{2}$ or $S^{'}_{3}=\emptyset$. In first paragraph we have considered $S^{'}_{2}=\emptyset$. Again from Lemma \ref{l8} we get that $V(S^{'}_{3})$, $V(S^{'}_{1})$ form out-fountains (when they are nonempty) with respect to some orderings $\sigma_{S^{'}_{3}}$, $\sigma_{S^{'}_{1}}$ respectively. We place the vertices of $S^{'}_{3}$ clockwise within $(v_{3},v_{1})$  arranged in $\sigma_{S^{'}_{3}}$ before the first vertex of $S^{''}_{3}$ and vertices of $S^{'}_{1}$ clockwise within $(v_{1},v_{2})$ arranged in $\sigma_{S^{'}_{1}}$  before the first vertex of $S_{4}$ in $V_{S_{P}}$.
 
\vspace{1em}

\noindent Hence $V_{S_{P}}$ takes the form $\mathbf{v_{1}<_{\sigma} \sigma_{S^{'}_{1}}<_{\sigma} \sigma_{S_{4}} <_{\sigma}  v_{2} <_{\sigma} \sigma_{S^{''}_{2}}<_{\sigma} v_{3} <_{\sigma} \sigma_{S^{'}_{3}} <_{\sigma} \sigma_{S^{''}_{3}}<_{\sigma} v_{1}}$ when $S^{''}_{1}=S^{'}_{2}=\emptyset$.

\vspace{1em}
\noindent Placing all the vertices in $V_{S_{P}}$ is analogous for the cases when $S^{''}_{2}=S^{'}_{3}=\emptyset$ or $S^{''}_{3}=S^{'}_{1}=\emptyset$. If more than one pair of sets among $\{ S^{''}_{1}, S^{'}_{2}\}, \{S^{''}_{2}, S^{'}_{3}\}$ or $\{ S^{''}_{3}, S^{'}_{1}\}$ become empty simultaneously (say $S^{''}_{1}=S^{'}_{2}=S^{''}_{2}=S^{'}_{3}=\emptyset$) then we place all the vertices of $S_{4}$ either within $(v_{1},v_{2})$ or within $(v_{2},v_{3})$ (not both simultaneously). 

\vspace{1em}
\noindent $\mathbf{(ii)\hspace{0.3em} S_{4}=\emptyset}$

\vspace{1em}

\noindent In this case first we place the vertices of $S^{''}_{1}, S^{''}_{2}, S^{''}_{3}$ within $(v_{1},v_{2})$, $(v_{2},v_{3})$, $(v_{3},v_{1})$ respectively according to the cases $S^{''}_{1}\neq \emptyset$, $S^{''}_{2}\neq \emptyset$, $S^{''}_{3}\neq \emptyset$. While placing the vertices of a certain set (say $S^{''}_{1}$) we arrange the vertices according to the order (say $\sigma_{S^{''}_{1}}$) with respect to which $S^{''}_{1}$ forms an out-fountain and place them clockwise within $(v_{1},v_{2})$ in $V_{S_{P}}$. Similarly vertices of $S^{''}_{2}$, $S^{''}_{3}$ can be placed within $(v_{2},v_{3})$, $(v_{3},v_{1})$ arranged in $\sigma_{S^{''}_{2}}$, $\sigma_{S^{''}_{3}}$ respectively.

\vspace{1em}

\noindent Next we place vertices of $S^{'}_{1},S^{'}_{2}, S^{'}_{3}$ within $(v_{1},v_{2})$, $(v_{2},v_{3})$, $(v_{3},v_{1})$ respectively according to the cases $S^{'}_{1}\neq \emptyset$, $S^{'}_{2}\neq \emptyset$, $S^{'}_{3}\neq \emptyset$. While placing the vertices of a certain set (say $S^{'}_{1}$) we arrange them according to the order (say $\sigma_{S^{'}_{1}}$) with respect to which $S^{'}_{1}$ forms an out-fountain and place them clockwise within $(v_{1},v_{2})$ before the first point of $S^{''}_{1}$ in $V_{S_{P}}$. Analogously vertices of $S^{'}_{2}$, $S^{'}_{3}$ are to be placed within $(v_{2},v_{3})$, $(v_{3},v_{1})$ arranged in $\sigma_{S^{'}_{2}}$, $\sigma_{S^{'}_{3}}$ before the first vertex of $S^{''}_{2}, S^{''}_{3}$ respectively in $V_{S_{P}}$.

\vspace{1em}

\noindent Hence $V_{S_{P}}$ takes the form $\mathbf{v_{1}<_{\sigma} \sigma_{S^{'}_{1}}<_{\sigma} \sigma_{S^{''}_{1}} <_{\sigma}  v_{2} <_{\sigma}\sigma_{S^{'}_{2}} <_{\sigma} \sigma_{S^{''}_{2}}<_{\sigma} v_{3} <_{\sigma} \sigma_{S^{'}_{3}} <_{\sigma} \sigma_{S^{''}_{3}}<_{\sigma} v_{1}}$.

\vspace{0.9em}

\begin{lem}\label{l10}
Let $S_{4}=\emptyset$. Then out-neighbors of a vertex $w_{i}\in S^{'}_{i+2}$ in $S^{''}_{i}$ occur consecutively in $V_{S_{P}}$. Again out-neighbors of $w_{i}\in S^{''}_{i}$ in $S^{'}_{i+2}$ occur consecutively in $V_{S_{P}}$.
\end{lem}

\begin{proof}
We assume on the contrary that $x_{i}<_{\sigma} y_{i}<_{\sigma} z_{i}$ such that $w_{i}x_{i},y_{i}w_{i},w_{i}z_{i}\in E$ where $w_{i}\in S^{'}_{i+2}$ and $x_{i},y_{i},z_{i}\in S^{''}_{i}$. Then as $w_{i}z_{i},y_{i}w_{i}\in E$ from Lemma \ref{ld} it follows that $z_{i}y_{i}\in E$. Again as $S^{''}_{i}$ forms an out-fountain, from construction of $V_{S_{P}}$ we get $z_{i}<_{\sigma} y_{i}$. But this contradicts our assumption. Therefore the result follows. Other case also holds true using Lemma \ref{lc}.
\end{proof}

\vspace{0.2em}
\noindent  \textit{main proof:} Now to prove $A(G)$ satisfies circular $1$'s property along rows of it when the vertices are ordered according to $V_{S_{P}}$, it is sufficient to show that out-neighbors of every vertex of $V_{S_{P}}$ occur clockwise right or anticlockwise left (or both) side of it consecutively in $V_{S_{P}}$.

\vspace{0.6 em}
\noindent a) We first consider the case when $S_{4}\neq \emptyset$. Then from Lemma \ref{l5} there exist some $i\in\{1,2,3\}$ satisfying $S^{''}_{i}=S^{'}_{i+1}=\emptyset$. Let $S^{''}_{1}=S^{'}_{2}=\emptyset$. Without loss of generality we can assume vertices of $S_{4}$ are placed within $(v_{1},v_{2})$  as per the construction of $V_{S_{P}}$. Hence $V_{S_{P}}$ is of the form 

\vspace{0.9em}

$v_{1}<_{\sigma} \sigma_{S^{'}_{1}}<_{\sigma} \sigma_{S_{4}} <_{\sigma}  v_{2} <_{\sigma} \sigma_{S^{''}_{2}}<_{\sigma} v_{3} <_{\sigma} \sigma_{S^{'}_{3}} <_{\sigma} \sigma_{S^{''}_{3}}<_{\sigma} v_{1}$. 

\vspace{0.9em}

\noindent From Lemma \ref{l9}, we get to know that $S^{''}_{2} \cup \{v_{3}\} \cup S^{'}_{3} \cup S^{''}_{3}\cup \{v_{1}\}$ and $S^{'}_{3} \cup S^{''}_{3} \cup \{v_{1}\}\cup S^{'}_{1}$ form out-fountains in $G$ with respect to their occurrence in $V_{S_{P}}$. 
Again following Lemma \ref{l22} every vertex of $S^{'}_{1}$ becomes out-neighbor of vertices of the set $S^{''}_{2}$. Moreover Lemma \ref{l11} ensures that each vertex of $S_{4}$ is an out-neighbor of the vertex sets $S^{''}_{i}, S^{'}_{i}, i\in\{1,2,3\}$. Therefore from the adjacencies of the vertices of $V_{S_{P}}$ it is clear now that out-neighbors of every vertex occur consecutively right (clockwise) or consecutively left (anticlockwise) or both side of it in $V_{S_{P}}$. 

\vspace{0.4em}
\noindent Other cases (i.e., when $S^{''}_{2}=S^{'}_{3}=\emptyset$ or $S^{''}_{3}=S^{'}_{1}=\emptyset$) can be verified in analogous way.

\vspace{0.7 em}
\noindent b) Next we consider the case when $S_{4}=\emptyset$. In this case $V_{S_{P}}$ is of the form 
\vspace{0.51em}

$v_{1}<_{\sigma} \sigma_{S^{'}_{1}}<_{\sigma} \sigma_{S^{''}_{1}} <_{\sigma}  v_{2} <_{\sigma}\sigma_{S^{'}_{2}} <_{\sigma} \sigma_{S^{''}_{2}}<_{\sigma} v_{3} <_{\sigma} \sigma_{S^{'}_{3}} <_{\sigma} \sigma_{S^{''}_{3}}<_{\sigma} v_{1}$.

\vspace{0.7em}
\noindent From Lemma \ref{l9}, one can see that for each $i\in\{1,2,3\}$, $S^{'}_{i} \cup S^{''}_{i} \cup \{v_{i+1}\}\cup S^{'}_{i+1}$ and  $S^{''}_{i} \cup \{v_{i+1}\} \cup S^{'}_{i+1} \cup S^{''}_{i+1}\cup \{v_{i+2}\}$ form out-fountains with respect to their occurrences in $V_{S_{P}}$.
Again from Lemma \ref{l10} we get that out-neighbors of vertices corresponding to the sets $S^{'}_{i+2}, S^{''}_{i}$ occur consecutively in $V_{S_{P}}$. Rest of the verification follows from the adjacency of every vertex of $V_{S_{P}}$.

\vspace{0.9 em}
\noindent \textbf{Case (ii)}

\noindent Let $G$ does not contain any directed $3$-cycle as induced subdigraph. Then from Theorem \ref{d3cycle} we get to know that $G$ is an interval catch digraph. Therefore $G$ becomes a circular-arc catch digraph with the same representation.
\end{proof}

\section{Proper circular-arc catch digraph}\label{mc}

\begin{defn}\label{pcac}
A {\em proper circular-arc catch digraph}\index{proper circular-arc catch digraph} is a circular-arc catch digraph where no circular-arc is contained in another properly. 
\end{defn}

\noindent In this section, we obtain the characterization of proper circular-arc catch digraphs in terms of the augmented adjacency matrix structure of it. For this we need to define {\em monotone circular ordering} of a binary matrix, which will be used as a main tool for this characterization.

\begin{figure}[b]
\[
\begin{array}{c|c|c|c|c|c|c|c|ccc}

\multicolumn{1}{c}{} & \multicolumn{1}{c}{v_1} & \multicolumn{1}{c}{v_{2}} & \multicolumn{1}{c}{v_3} & \multicolumn{1}{c}{v_4} &\multicolumn{1}{c}{v_{5}}& \multicolumn{1}{c}{v_6} & \multicolumn{1}{c}{v_7}  &\multicolumn{1}{c}{} & \lambda_{i} &\mu_{i} \\
\cline{2-8}
v_2 & \multicolumn{1}{c}{1} &\multicolumn{1}{c}{1} & \multicolumn{1}{c}{1} & 1 & \multicolumn{1}{c}{0}  & \multicolumn{1}{c}{0}& 0&  &1 & 4 \\ 

\cline{2-3}
\cline{6-6}
v_4 & \multicolumn{1}{c}{0} &0 & \multicolumn{1}{c}{1} & \multicolumn{1}{c}{1} & 1 &\multicolumn{1}{c}{0} & 0 & &3& 5 \\ 

\cline{7-7}

v_3 & \multicolumn{1}{c}{0} &0 & \multicolumn{1}{c}{1} & \multicolumn{1}{c}{1} & \multicolumn{1}{c}{1}  & 1 &0  &  &3& 6 \\ 
\cline{2-3}
\cline{8-8}

v_1 & \multicolumn{1}{c}{1} & 1& \multicolumn{1}{c}{1} & \multicolumn{1}{c}{1} & \multicolumn{1}{c}{1}  & \multicolumn{1}{c}{1}& 1 & & 3& 9 \\ 

v_5 & \multicolumn{1}{c}{1} & 1& \multicolumn{1}{c}{1} & \multicolumn{1}{c}{1} & \multicolumn{1}{c}{1}  & \multicolumn{1}{c}{1}& 1 & &3 & 9 \\

\cline{4-6}

v_7 & \multicolumn{1}{c}{1} &1 & \multicolumn{1}{c}{0} & \multicolumn{1}{c}{0} &0  &\multicolumn{1}{c}{1} &1 & &6& 9 \\
\cline{4-4} 

v_6 & \multicolumn{1}{c}{1} &\multicolumn{1}{c}{1} & 1 & \multicolumn{1}{c}{0} & 0 & \multicolumn{1}{c}{1}& 1&  &6 & 10 \\ 

\cline{2-8}
\end{array}
\]
\caption{Matrix $B$ satisfying monotone circular ordering}\label{fg1}
\end{figure}

\vspace{0.2em}
\begin{defn}\label{MCO}
\noindent Let $B$ be a $m\times n$ binary matrix containing rows and columns of all types except zero rows. Let $B$ satisfy circular $1$'s property along  both rows and columns of it. We call a nontrivial row ($i^{\text{th}}$ row say) as row of \textit{$1$st type} if it contains exactly one stretch of $1$'s (i.e., maximum two stretch of $0$'s) and a row as row of \textit{$2$nd type} if it contains exactly two stretch of $1$'s (i.e., one stretch of $0$'s). Let $[i_{1},i_{2}]$ and $[i_{3},i_{4}]$ be the circular stretch of $1$'s in the $i^{\text{th}}$ row (nontrivial) of $B$ when the row is of $1$st type and $2$nd type respectively, where $i_{1},i_{2}$ $(i_{3},i_{4})$ denote the column numbers where circular stretch of $1$'s in the $i^{\text{th}}$ row of $B$ starts and ends for rows of $1$st ($2$nd) type (cf. in matrix $B$ of Figure \ref{fg1}, rows corresponding to $v_{2},v_{4},v_{3}$ are of $1$st type and $v_{7},v_{6}$ are of $2$nd type). Note that $i_{1}<i_{2}$ if the $i^
\text{th}$ row is of  $1$st type row and $i_{4}<i_{3}$ when it is of $2$nd type
(cf. in Figure \ref{fg1}, when $i=1$ (row $v_{2}$), $i_{1}=1, i_{2}=4$ and when $i=6$ (row $v_{7}$), $i_{3}=6,i_{4}=2$).

\vspace{0.3em}
\noindent We define for nontrivial rows,

\begin{equation}
\begin{split}
\begin{cases}
\lambda_{i}=i_{1}\\
\mu_{i}=
i_{2} & \mbox{if i is a 1st type row}\\
\end{cases}
\end{split}
\hspace{1em}
\begin{split}
\begin{cases}
\lambda_{i}=i_{3}\\
\mu_{i}=
i_{4}+n & \mbox{if i is a 2nd type row}\\
\end{cases}
\end{split}
\end{equation}

\vspace{0.7em}
\noindent A full row can be treated as row of 1st type or $2$nd type according to its occurrence in $B$. If the first row of $B$ is a full row then we take $\lambda_{1}=1$ and $\mu_{1}=n$ and consider it as $1$st type row. In other case, let $i-1$ be a nontrivial row and $i$ be a full row (if there is more than one full rows occur consecutively in $B$ then we consider the row which occurs first among them), then if $(i-1)_{1}=1$, we consider the $i^{\text{th}}$ row as row of $1$st type, otherwise (i.e., when $(i-1)_{1}>1$ or the $(i-1)^{\text{th}}$ row is of $2$nd type) we treat it as row of $2$nd type (cf. in matrix $B$ of Figure \ref{fg1}, $i=4$ (row $v_{1}$) is of $2$nd type). For such a full row we define, \\
\vspace{0.4em}
\begin{equation}
\begin{split}
\begin{cases}
\lambda_{i}=\lambda_{i-1} \hspace{5.3em} \mbox{where} \hspace{0.3em}  i-1\hspace{0.25em} \mbox{is a nontrivial row}\\
\mu_{i}=\begin{cases}n &\mbox{if} \hspace{0.3em}(i-1)_{1}=1 \\
\lambda_{i-1}-1+n & \mbox{otherwise}\\
\end{cases}
\end{cases}
\end{split}
\end{equation}

\vspace{0.7em}
\noindent Note that if there are more than one full rows occur consecutively in $B$ then for any other full rows (say $j^{\text{th}}$ row) which occur after $i^{\text{th}}$ one in a consecutive manner, we keep $\lambda_{j}=\lambda_{i},\mu_{j}=\mu_{i}$. Furthermore, we consider them as of the same type as the $i^{\text{th}}$ row (cf. in matrix $B$ of Figure \ref{fg1}, $i=5$ (row $v_{5}$) is of $2$nd type).   

\vspace{0.7em}
\noindent We call $B$ has a {\em monotone circular ordering}
if there exists a row ordering $v_{1},\hdots,v_{m}$ of $V$ such that $\{\lambda_{1},\lambda_{2},\hdots,\lambda_{m}\}$ and $\{\mu_{1},\mu_{2},\hdots,\mu_{m}\}$ form non-decreasing sequences when $B$ satisfies circular $1$'s property along both rows and columns of it.

\vspace{0.7em}
\noindent  We can select the end points of the circular stretch of $1$'s corresponding to the $i^\text{th}$ full row as $\lambda_{i}=1, \mu_{i}=n$ when it is of $1$st type and $\lambda_{i}, \lambda_{i}-1$ when it is of $2$nd type (cf. in matrix $B$ of Figure \ref{fg1}, when $i=4$ (row $v_{1}$), we consider $[3,2]$ as the corresponding circular stretch). This understanding is important for the construction of stair partition in $B$, which separates $0$'s from the $1$'s (see Figure \ref{fg1}). 
\end{defn}

\noindent \textbf{Notations:} We denote the set of all rows of a binary matrix $B$ by row$(B)$. Let $r,s$ be any two rows of a binary matrix $B$ satisfying monotone circular ordering. By $r^\ast$ we denote the set of column numbers having $1$ as their entries along the $r^{\text{th}}$ row. Then $r^{\ast}-s^{\ast}$ denote the set difference  of circular stretches of $1$'s of rows $r,s$ in $B$. Similarly, $r^{\ast}-s^{\ast}-t^{\ast}$ can be defined as set of those columns of $r^{\ast}-s^{\ast}$ which contain $1$ as their entries and do not contain $1$ along the $t^{\text{th}}$ row in $B$ (cf. in matrix $B$ of Figure \ref{fg1}, when $r=4,s=1,t=6$,
we have $r^\ast=\{1,2,3,4,5,6,7\}$, $s^\ast=\{1,2,3,4\}$, $t^\ast=\{1,2,6,7\}$. So $r^\ast-s^\ast=\{5,6,7\}$ and $r^\ast-s^\ast-t^\ast=\{5\}$).

\vspace{0.4em}

\noindent When $s,r$ both are nontrivial, $s^{\ast}\subset r^{\ast}$ denote that the circular stretch of $1$'s corresponding to row $s$ to be properly contained in the circular stretch of $1$'s corresponding to row $r$, i.e., both end points of the circular stretch of $1$'s corresponding to row $s$ are strictly within the circular stretch of $1$'s of row $r$, which means if both $s,r$ are of $1$st type then $r_{1}<s_{1}\leq s_{2}<r_{2}$ and if $s$ is of $1$st type, $r$ is of $2$nd type then either $s_{2}<r_{4}$ or $r_{3}<s_{1}$ and if both $s,r$ are of $2$nd type then $s_{4}<r_{4}<r_{3}<s_{3}$. We will follow similar notation $I_{s}\subset I_{r}$ to denote proper containment of the arc $I_{s}$ within $I_{r}$.

\vspace{0.4em}
\noindent We consider any nontrivial row $s$ to be always contained within a full row $r$.  For better understanding we refer circular stretch of $1$'s as {\em circular interval} throughout this section. One can note from above that for any two nontrivial rows $s,r $, $s^{*}\subset  r^{*}$ if and only if $r^{\ast}-s^{\ast}$ is not a circular interval. Moreover, when $r$ is a full row and $s$ is a nontrivial row, $r^{\ast}-s^{\ast}$ is always a circular interval. If $r^{\ast}-s^{\ast}=\emptyset$ or $s^{\ast}-r^{\ast}=\emptyset$ for any two rows $r,s$, then we trivially treat them as circular interval. Again for some full row $r$ and two nontrivial rows $s,t$, if $r^{\ast}-s^{\ast}-t^{\ast}=\emptyset$ or $r^{\ast}-t^{\ast}-s^{\ast}=\emptyset$, then also we consider them as circular interval.

\vspace{0.6em}
\begin{exmp}\label{f1f2f3}

Consider the matrices $F_{1},F_{2}, F_{3}$ in Figure \ref{forbid}. The matrix $F_1$ is also mentioned in \cite{BDGS} in the context of proper circular-arc bigraphs. One can note that each of these matrices satisfies circular $1$'s property along rows.  If we delete the full row from each $F_{i}$ then the remaining submatrices become $(i+2)\times (i+2)$ binary matrices where each row contains exactly $i$ number of $1$'s and each column contains exactly two $0$'s along columns for $i=1,2,3$. Moreover, one can verify that $F_{1}$ is not a submatrix of $F_{2}, F_{3}$ and $F_{2}$ is not a submatrix of $F_{3}$. We will prove that $F_{1}, F_{2}, F_{3}$ do not satisfy circular $1$'s property along rows and columns. Below we do the verification for the matrix $F_{2}$. Others can be shown similarly.

\begin{figure}[ht]
\[
F_{1}=
\left[\begin{array}{ccc}
  1 & 0 & 0  \\
  0 & 1 & 0  \\
  0 & 0 & 1  \\
  1 & 1 & 1  \\
\end{array}\right]
\\
F_{2}=\left[\begin{array}{cccc}
1 & 1 &0 & 0\\
0 & 1 & 1 & 0\\
0 & 0 & 1 & 1\\
1 & 0 & 0 & 1\\
1 & 1 & 1 & 1\\
\end{array}\right]
\\
F_{3}=\left[\begin{array}{ccccc}
1 & 1 &1 & 0 & 0\\
0 & 1 & 1 & 1 & 0\\
0 & 0 & 1 & 1 & 1\\
1 & 0 & 0 & 1 &1 \\
1 & 1 & 0 & 0 & 1\\
1 & 1 & 1 & 1 & 1\\
\end{array}\right]
\]
\caption{Matrices which does not satisfy circular $1$'s property along columns}\label{forbid}
\end{figure}
\vspace{0.3em}

\noindent Firstly, one can verify that the circular $1$'s property along rows of $F_2$ is retained only for cyclic permutation of columns and their reverse permutations. Any such column permuted matrix of $F_2$ can be transformed again to the original $F_2$ matrix by row permutations only. Thus for proving that $F_2$ does not satisfy circular $1$'s property along rows and columns under any independent permutation of rows and columns, it is sufficient to show that $F_2$ itself has no row permutation that satisfies circular $1$'s property along columns. Next one can note that under any row permutation of $F_{2}$, the number of zeros in each row and each column remains intact. Furthermore, since any two columns in $F_{2}$ are distinct, the columns remain distinct regardless of row permutation. Now on contrary, suppose $F_{2}$ or any of its row permutation, say $A=(a_{i,j})$  satisfies circular $1$'s property along columns. We note any cyclic permutation of the rows keeps the circular $1$'s property along columns intact. So without loss of generality we may assume that the first row of $A$ is the full row. Let $a_{2,i}=a_{2,j}=0$ for some $j\neq i$ as each row contains exactly two zeros. Now since $A$ satisfies circular $1$'s property along columns, stretches of $1$'s in each column must be circular. Then since the first row is a full row, zeros must be consecutive in each column. Thus we have $a_{3,i}=a_{3,j}=0$. But this implies that the $i^{\text{th}}$ column and $j^{\text{th}}$ column are identical as every column contains exactly two zeros. Hence contradiction arises since each column differs from the others in $F_{2}$.
\end{exmp}

\begin{prop}\label{prop1}
Matrices $F_{1}, F_{2}, F_{3}$ does not satisfy circular $1$'s property along rows and columns.
\end{prop}

\begin{proof}
The proof is evident from the discussion mentioned in Example \ref{f1f2f3}. 
\end{proof}

\vspace{0.4em}
\noindent In the following theorem we present characterization of a proper circular-arc catch digraph.

\begin{thm}\label{pcacd}
Let $G=(V,E)$ be a simple digraph. Then $G$ is a proper circular-arc catch digraph if and only if 
\begin{enumerate}
\item The augmented adjacency matrix $A(G)$ of $G$ satisfies circular $1$'s property along rows and columns of it. Let $B$ be the transformed matrix obtained 
by permuting rows and columns of $A(G)$, where $1$'s in each row and column of $B$ are circular.

\item  If $r,s$ be any two rows of $B$. Then $r^{\ast}-s^{\ast}$ and $s^{\ast}-r^{\ast}$ are circular intervals.

\item For any two nontrivial rows $s,t$ of $B$, $r^{\ast}-s^{\ast}-t^{\ast}$ and $r^{\ast}-t^{\ast}-s^{\ast}$ are circular intervals where $r$ is a full row.
\end{enumerate}
\end{thm}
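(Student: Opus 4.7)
The plan is to prove both directions using the circular-arc catch digraph characterization of Theorem \ref{cop1} together with geometric properties of proper circular-arc representations, transcribed through the monotone circular ordering of Definition \ref{MCO}.

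For the forward direction, I would start from a proper representation $\{(I_v, p_v) : v \in V\}$ and order rows by the clockwise position of the arc start-points $a_v$ and columns by the clockwise position of the points $p_v$. Condition (1) then follows by two applications of the circular ones argument of Theorem \ref{cop1}: row circular ones is immediate because the 1s in row $i$ are exactly the indices $j$ with $p_j \in I_i$; column circular ones exploits properness, since the set of arcs containing a fixed $p_j$ is consecutive among the rows ordered by $a_v$ (if $p_j \in I_s$ and $p_j \in I_u$ with $a_s, a_t, a_u$ appearing clockwise in that order, the absence of containment forces $p_j \in I_t$ as well). For condition (2), note that $r-s$ corresponds to $\{j : p_j \in I_r \setminus I_s\}$; a case check shows that for two proper arcs on a circle, $I_r \setminus I_s$ is either empty, all of $I_r$, or a single arc, so it meets the column sequence in a circular interval. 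For condition (3), a full row $r$ forces all $p_j$ to lie in $I_r$, which together with properness constrains the positions of nontrivial arcs $I_s, I_t$; I would argue that the points lying outside both $I_s$ and $I_t$ form a circular interval of columns by analyzing the configurations of $I_s, I_t$ relative to the full-row arc $I_r$, ruling out the ``two disjoint gaps'' configuration via properness together with the induced monotone endpoint ordering.

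For the backward direction, given $B$ satisfying (1), (2), (3), I would mimic the construction in the proof of Theorem \ref{cop1} to obtain a circular-arc catch digraph representation: to row $i$, assign arc $I_i=[a_i,b_i]$ running over the circular stretch of 1s, and a point $p_i$ placed at the $i$-th column position. Theorem \ref{cop1} then guarantees that the adjacency relation is recovered. The crux is to show that the constructed arcs are proper. Suppose for contradiction that $I_j \subsetneq I_i$; then every column with a 1 in row $j$ also has a 1 in row $i$, and moreover $I_j$ does not touch both endpoints of $I_i$. Translating back, the set $i-j$ would have to include column positions on both sides of $I_j$ around the circle, producing two disjoint stretches of 1s and contradicting condition (2). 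The role of condition (3) is to prevent a more subtle containment which arises when $i$ is a full row: two nontrivial rows $s, t$ arranged with disjoint 1-stretches would geometrically permit one of the arcs to slip inside another in the construction, and the single-circular-interval condition on $r-s-t$ eliminates exactly this configuration by forcing $I_s \cup I_t$ to cover a single arc (or the whole circle) after the assignment.

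The main obstacle I anticipate lies in the reverse direction, and within it the combinatorial bookkeeping of condition (3). Properness is a global geometric property which must be read off from local matrix patterns, and the full-row case is exceptional because $I_r$ is allowed to be large without itself violating properness; the interaction of $s$ and $t$ inside this large arc is what condition (3) is designed to control. Consistently matching the arc endpoints, the monotone circular ordering from Definition \ref{MCO}, and the proper geometric placement across all configurations will require careful case analysis based on whether each row is a row of first type, second type, or a full row in the sense of Definition \ref{MCO}, and on how the parameters $\lambda_i, \mu_i$ interlock at the transition between nontrivial and full rows.
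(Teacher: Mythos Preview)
Your forward direction is essentially the paper's argument: order rows by the start-points $a_v$ and columns by the points $p_v$, then read off conditions (1)--(3) from properness. That part is fine.

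The reverse direction has a genuine gap. You propose to mimic the construction of Theorem~\ref{cop1}, assigning to row $i$ the arc $I_i$ whose endpoints are the integer column indices bounding the circular stretch of ones, and then argue that a containment $I_j\subsetneq I_i$ would force $i-j$ to have ``column positions on both sides of $I_j$,'' contradicting condition~(2). This is false. Condition~(2) permits the $1$-stretch of row $j$ to sit at one \emph{end} of the $1$-stretch of row $i$: for example $i$ with ones in $\{1,2,3\}$ and $j$ with ones in $\{1,2\}$ gives $i-j=\{3\}$ and $j-i=\emptyset$, both circular intervals, yet your construction yields $I_j=[1,2]\subsetneq[1,3]=I_i$. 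The same phenomenon is unavoidable whenever $i$ is a full row and $j$ is nontrivial, independently of condition~(3); your reading of condition~(3) as ``preventing one arc from slipping inside another'' is not what it does.

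What the paper actually does in the converse is substantially more delicate, and this is the missing idea. Starting from $B$, it first strips out the full rows and sorts the remaining rows (first type by $i_1$ then $i_2$, second type by $i_3$ then $i_4$) into a matrix $D$ that satisfies the monotone circular ordering of Definition~\ref{MCO}; conditions (2)--(3) are exactly what makes this sorting succeed. Then comes a lengthy case analysis showing where to re-insert the full rows so that the resulting matrix $M$ still has monotone circular ordering (this is where condition~(3) is really used, to rule out forbidden column patterns $F_1,F_2,F_3$). Only after $M$ is in hand does the paper build arcs, and not with integer endpoints: it lays a staircase of integers $l_j,r_i$ along the upper boundary of the ones and sets
\[
a_i=l_{i_1}+\frac{n+i-i_1}{s_i-i_1+1},\qquad b_i=r_i\ \text{(first type)},\qquad b_i=l_{i_4}+\frac{n+i-i_4}{k_i-i_4+1}\ \text{(second type)},
\]
with analogous fractional tweaks for the points $p_j$. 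The fractions are precisely what break the ties that cause your naive containments: two rows sharing $i_1=j_1$ get distinct $a_i<a_j$ because $i<j$ in $M$, and the monotone ordering guarantees $b_i\le b_j$ as well. Your plan needs this entire layer---the monotone reordering, the full-row insertion, and the fractional endpoint construction---before properness can be concluded.
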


\begin{proof}
\textit{\textbf{Necessity:}} Let $G=(V,E)$ be a proper circular-arc catch digraph with representation 
$\{(I_{i},p_{i})|i=1,\hdots,n\}$ where $I_{i}=[a_{i},b_{i}]$ be the circular arc and $p_{i}$ be the point within $I_{i}$ attached to each vertex $v_{i}\in V$ in $G$. We construct the matrix $B$ by arranging rows and columns of the augmented adjacency matrix $A(G)$ according to increasing order of $a_{i}$'s (say $\prec_{r}$) and according to increasing order of $p_{i}$'s (say $\prec_{c}$) respectively. Now  it is clear that $v_{i}v_{j}=1$ in $B$ if and only if $p_{j}\in I_{i}$.

\vspace{0.4em}
 \noindent\textbf{\textit{$1.$ Circular $1$'s property along rows and columns is satisfied by $B$.}}

\vspace{0.4em}

\noindent Now let $v_{1}\prec_{c}v_{3}$ and $i$ be an arbitrary row of $B$. Now if $v_{i}v_{1}, v_{i}v_{3}\in E$ then either the circular interval $[p_{1},p_{3}]\subseteq I_{i}$ or $[p_{3},p_{1}]\subseteq I_{i}$. Now if $v_{i}v_{1}, v_{i}v_{3}\in E$ then either the circular interval $[p_{1},p_{3}]\subseteq I_{i}$ or $[p_{3},p_{1}]\subseteq I_{i}$. If $[p_{1},p_{3}]\subseteq I_{i}$ then $p_{2}\in I_{i}$ for all $v_{1}\prec_{c}v_{2}\prec_{c} v_{3}$, which imply $v_{i}v_{2}\in E$. Again when $[p_{3},p_{1}]\subseteq I_{i}$ then $p_{w},p_{k}\in I_{i}$, i.e., $v_{i}v_{w}, v_{i}v_{k}\in E$ for all $v_{w}\succ_{c}v_{3}$ and $v_{k}\prec_{c} v_{1}$. Hence $B$ satisfies circular $1$'s property for rows.

\vspace{0.3em}
\noindent Again let $v_{1}\prec_{r} v_{3}$ and $j$ be an arbitrary column of $B$. Now if $v_{1}v_{j},v_{3}v_{j}\in E$, then  $p_{j}\in I_{1}\cap I_{3}$.
As $a_{1}<a_{3}$ and the arcs are not contained in each other,  $I_{1}\cap I_{3}=[a_{1},b_{3}]$ or $[a_{3},b_{1}] $ or their union. Now if $I_{1}\cap I_{3}=[a_{1},b_{3}]$ then 
$v_{w}v_{j},v_{k}v_{j}\in E$ for all $v_{w}\succ_{r} v_{3}$ and $v_{k}\prec_{r} v_{1}$ as $[a_{1},b_{3}]\subset I_{w}, I_{k}$ and $I_{w}, I_{k}$ are not contained in $I_{3}$. Again when $I_{1}\cap I_{3}=[a_{3},b_{1}]$, then $p_{j}\in [a_{3},b_{1}]\subset [a_{2},b_{2}]=I_{2}$ for all $v_{1}\prec_{r} v_{2} \prec_{r} v_{3}$, which imply $v_{2}v_{j}\in E$. Lastly, when $I_{1}\cap I_{3}=[a_{1},b_{3}]\cup [a_{3},b_{1}]$ then from above we find $j$ to be a column containing all entries as $1$. Therefore in each of these cases $B$ satisfies circular $1$'s property for columns.

\vspace{0.4em}

\noindent\textbf{\textit{$2.$ Let $r,s$ be any two rows of $B$. Then $r^{\ast}-s^{\ast}$ and $s^{\ast}-r^{\ast}$ are circular intervals.}}\\
 (cf. in matrix $B$ of Figure \ref{fg1}, if we take $r=7$, $s=1$, then $r^{\ast}-s^{\ast}=\{6,7\}$, $s^{\ast}-r^{\ast}=\{4\}$. Both are circular intervals.)

\vspace{0.3em}

\noindent To prove $r^{\ast}-s^{\ast}$ and $s^{\ast}-r^{\ast}$ are circular intervals it is sufficient to show neither of them is properly contained in other when both of them are nontrivial. 

\vspace{0.3em}
\noindent On contrary let us assume $s^{\ast}\subset r^{\ast}$. Now if $r,s$ both are of $1$st type then $r_{1}<s_{1}<s_{2}<r_{2}$ and so $p_{r_{1}}<p_{s_{1}}<p_{s_{2}}<p_{r_{2}}$ clearly. Now as $p_{r_{1}},p_{r_{2}}\in I_{r}$ either $[p_{r_{1}},p_{r_{2}}]$ or $[p_{r_{2}},p_{r_{1}}]\subseteq I_{r}$. As $r,s$ are nontrivial and are of $1$st type, it is easy to verify that $[p_{r_{1}},p_{r_{2}}]\subseteq I_{r}$ and $[p_{s_{1}},p_{s_{2}}]\subseteq I_{s}$. Therefore as $p_{r_{1}},p_{r_{2}}\in I_{r}$ and $p_{r_{1}},p_{r_{2}}\notin I_{s}$ imply $[p_{s_{1}},p_{s_{2}}]\subseteq I_{s}\subset [p_{r_{1}},p_{r_{2}}]\subseteq I_{r}$. Hence $I_{s}\subset I_{r}$, which is a contradiction as the arcs are proper.

\vspace{0.2em}

\noindent Next if $r,s$ both are of 2nd type, then $p_{s_{4}}<p_{r_{4}}<p_{r_{3}}<p_{s_{3}}$ clearly. Now as both $r,s$ are nontrivial and are of $2$nd type $[p_{r_{3}},p_{r_{4}}]\subseteq I_{r}$ and $[p_{s_{3}},p_{s_{4}}]\subseteq I_{s}$. Again as $p_{r_{4}},p_{r_{3}}\in I_{r}$ and $p_{r_{4}},p_{r_{3}}\notin I_{s}$ it follows that $[p_{s_{3}},p_{s_{4}}]\subseteq I_{s}\subset [p_{r_{3}},p_{r_{4}}]\subseteq I_{r}$, i.e., $I_{s}\subset I_{r}$, which is not true. Next when $r$ is row of 2nd type and $s$ is of 1st type. Then $[p_{r_{3}},p_{r_{4}}]\subseteq I_{r}$ and $[p_{s_{1}},p_{s_{2}}]\subseteq I_{s}$ clearly. Again $s^{\ast}\subset r^{\ast}$ implies either $s_{2}<r_{4}$ or $r_{3}<s_{1}$. When $s_{2}<r_{4}$ then $p_{s_{1}}<p_{s_{2}}<p_{r_{4}}<p_{r_{3}}$. Hence $p_{r_{4}},p_{r_{3}}\in I_{r}$ and $p_{r_{4}},p_{r_{3}}\notin I_{s}$ imply $[p_{s_{1}},p_{s_{2}}]\subseteq I_{s}\subset [p_{r_{3}}, p_{r_{4}}]\subseteq I_{r}$, i.e., $I_{s}\subset I_{r}$, which is a contradiction. Similar contradiction will arise when $r_{3}<s_{1}$. One can find similar contradiction when $r^{\ast}\subset s^{\ast}$.
    
\vspace{0.5em}  
 
\noindent\textbf{\textit{$3.$ For any two nontrivial rows $s,t$ of $B$, $r^{\ast}-s^{\ast}-t^{\ast}$ and $r^{\ast}-t^{\ast}-s^{\ast}$ are circular intervals where $r$ is a full row.}}\\
(cf. in matrix $B$ of Figure \ref{fg1}, if we take $r=5$, $s=3$, $t=1$, then $r^{\ast}-s^{\ast}-t^{\ast}=\{7\}, r^{\ast}-t^{\ast}-s^{\ast}=\{7\}$. Both are circular intervals.)

\vspace{0.3em}
   
\noindent Now to prove for any two nontrivial rows $s,t$ and a full row $r$ of $B$, $r^{\ast}-s^{\ast}-t^{\ast}$ and $r^{\ast}-t^{\ast}-s^{\ast}$ are circular intervals, it is sufficient to show the followings.
\begin{enumerate}
\item $t_{1}-s_{2}\leq 1$ or $s_{1}=1, t_{2}=n$ when $s,t$ both are rows of $1$st type (assume $s_{1}\leq t_{1}$)

\item either $t_{1}-s_{4}\leq 1$ or $s_{3}-t_{2}\leq 1$ when $s$ be a row of 2nd type and $t$ be a row of 1st type. 
\end{enumerate}

\noindent Now on contrary to $1$, let $t_{1}-s_{2}>1$ when $s_{1}\neq 1$ or $t_{2}\neq n$.
 In particular we take $s_{1}\neq 1$. Then $p_{1}<p_{s_{1}}\leq p_{s_{2}}<p_{s_{2}+1}<p_{t_{1}}\leq p_{t_{2}}\leq p_{n}$. Now as $p_{s_{2}+1},p_{1}\in I_{r}$, either $[p_{s_{2}+1},p_{1}]$ or $[p_{1},p_{s_{2}+1}]\subseteq I_{r}$. If $[p_{s_{2}+1},p_{1}]\subseteq I_{r}$ then $[p_{t_{1}},p_{t_{2}}]\subseteq I_{t}\subset [p_{s_{2}+1},p_{1}]\subseteq I_{r}$ as $p_{s_{2}+1},p_{1}\notin I_{t}$, which imply $I_{t}\subset I_{r}$. Again when $[p_{1},p_{s_{2}+1}]\subseteq I_{r}$ then $[p_{s_{1}},p_{s_{2}}]\subseteq I_{s} \subset [p_{1},p_{s_{2}+1}]\subseteq I_{r}$ as $p_{1},p_{s_{2}+1}\notin I_{s}$
which imply $I_{s}\subset I_{r}$. Hence contradiction arises in both of the cases as the arcs are proper. Similar contradiction will arise when $t_{2}\neq n$.

\vspace{0.3em}

\noindent Now on contrary to $2$, let $t_{1}-s_{4},s_{3}-t_{2}>1$. Then $p_{1}\leq p_{s_{4}}<p_{s_{4}+1}<p_{t_{1}}\leq p_{t_{2}}<p_{s_{3}-1}<p_{s_{3}}\leq p_{n}$. Now as $p_{s_{4}+1},p_{s_{3}-1}\in I_{r}$, either $[p_{s_{4}+1},p_{s_{3}-1}]$ or 
$[p_{s_{3}-1},p_{s_{4}+1}]\subseteq I_{r}$. Now if  $[p_{s_{4}+1},p_{s_{3}-1}]\subseteq I_{r}$ then $[p_{t_{1}},p_{t_{2}}]\subseteq I_{t}\subset [p_{s_{4}+1},p_{s_{3}-1}]\subseteq I_{r}$ as $p_{s_{4}+1},p_{s_{3}-1}\notin I_{t}$, which imply $I_{t}\subset I_{r}$. Again when $[p_{s_{3}-1},p_{s_{4}+1}]\subseteq I_{r}$ then $[p_{s_{3}},p_{s_{4}}]\subseteq I_{s}\subset [p_{s_{3}-1},p_{s_{4}+1}]\subseteq I_{r}$
as $p_{s_{3}-1},p_{s_{4}+1}\notin I_{s}$, which imply $I_{s}\subset I_{r}$. 
Thus we get contradiction as the arcs are proper.

\vspace{0.4em}
\noindent If both $s,t$ are of $2$nd type, then
both $r^{\ast}-s^{\ast}-t^{\ast}$ and $r^{\ast}-t^{\ast}-s^{\ast}$ are circular intervals.

\vspace{0.57em}

\noindent Below we proceed to prove the \textit{\textbf{Sufficiency}} of \textbf{Theorem \ref{pcacd}}.  

\vspace{0.57em}

\noindent Let the augmented adjacency matrix of $G$ ($A=A(G)=(a_{i,j})$) satisfy all the above conditions after an independent permutation of rows and columns of it. Let $B=(b_{i,j})$ be the transformed matrix after these permutations. As permutation of rows and columns of $A$ does not change the adjacency of $G$, if the ${i^{\prime}}^{\text{th}}$ row of $A$ is shifted to the $i^{\text{th}}$ row of $B$ and the ${j^{\prime}}^{\text{th}}$ column of $A$ is shifted to the $j^{\text{th}}$ column of $B$ then 

\begin{equation}\label{l1}
a_{i^{\prime},j^{\prime}}=1\Leftrightarrow b_{i,j}=1  
\end{equation}

\vspace{0.7 em}
\noindent \textbf{Step I:  We construct a submatrix $D=(d_{i,j})$ from $B$ in following manner.}
 
 \begin{enumerate}
 \item Delete all full rows from $B$.
 \item  Next arrange all rows of  $1$st type according to increasing order of $i_{1}$'s. If there are two rows $i,j$ satisfying $i_{1}=j_{1}$, then we place $i$ before $j$ if $i_{2}<j_{2}$. Again if $i_{1}=j_{1},i_{2}=j_{2}$ hold then we can place them in any order. We call the submatrix of $B$ formed only by the rows of $1$st type by $D_{1}$.
 
 \item Now we arrange the rows of $2$nd type according to increasing order of $i_{3}$'s. If there are two rows $i,j$ satisfying $i_{3}=j_{3}$ then we place $i$ before $j$ only if $i_{4}<j_{4}$. Again if $i_{3}=j_{3}, i_{4}=j_{4}$ then we can place them in any order. We call the submatrix of $B$ formed only by the rows of $2$nd type by $D_{2}$.
 
 \item After the row arrangements are done for $D_{1}, D_{2}$, we place $D_{2}$ below $D_{1}$ and call the whole submatrix containing $D_{1}$ and $D_{2}$ by $D$.
 \end{enumerate}

\begin{lem}\label{dm}
Matrix $D$ satisfies monotone circular ordering.
\end{lem}

\begin{proof}
\noindent As permutation of rows does not change circular $1$'s property along rows of a matrix, it is easy to conclude that $D$ satisfies circular $1$'s property along rows and also conditions $2,3$ of the Theorem \ref{pcacd} remain preserved in $D$ for all nontrivial rows of $B$. Therefore for any two arbitrary rows $i,j$ of $D$ where $i$ is a row of $1$st type and $j$ is a row of $2$nd type, we get 

\begin{equation}\label{eq1}
i_{1}\leq j_{3} \hspace{0.37em} \mbox{and} \hspace{0.37em} j_{4}\leq i_{2}.
\end{equation}

 \noindent Hence it is sufficient to prove now that $D$ satisfy circular $1$'s property  along columns.

\vspace{0.39em}
\noindent On contrary, let $l$ be a column in $D$ containing $(0,1,0,1)$ along rows $x,y,z,w$. Now if $d_{x,l}=0\in D_{2}$, then $x_{4}<l\leq y_{4}$ as $d_{y,l}=1$ where $y>x$ in $D$. Again as $z>y$ in $D$, $z_{4}\geq y_{4}$. Therefore $z_{4}\geq l$, which imply $d_{z,l}=1$, but this is not true. Hence $d_{x,l}=0\in D_{1}$. If $x_{1}>l$ then $d_{y,l}=1\in D_{2}$, otherwise $y_{1}\geq x_{1}>l$ imply $d_{y,l}=0$ which is not true. Therefore $x_{2}<l$. Again note that $d_{w,l}=1\in D_{2}$ from above arrangement of $D$ and $w_{4}\leq x_{2}<l$ as $D$ satisfies (\ref{eq1}). Hence $w_{3}\leq l$.
Now if $z\in \mbox{row}(D_{1})$ then $w_{3}\leq l<z_{1}$ contradict  (\ref{eq1}) for the rows $w,z$. Again if 
$z\in \mbox{row}(D_{2})$, then $w_{3}\geq z_{3}>l$ introduces contradiction. 
 
\vspace{0.37em}
\noindent Similarly one can verify that $(1,0,1,0)$ can never appear in any column of $D$. Hence $D$ satisfies circular $1$'s property along columns and therefore $D$ satisfies condition $1$ for all nontrivial rows of $B$. 
\end{proof}

\vspace{0.57em}

\noindent \textbf{Step II: Construction of matrix $M$ from $D$ satisfying monotone circular ordering.}

\vspace{0.7em}

\noindent Depending upon the matrix structure of $D$ we will insert the full rows in $D$ so that the transformed matrix $M=(m_{i,j})$ also satisfy monotone circular ordering. Below we describe the method of inserting a full row in $D$. We place all the full rows consecutively in $D$ to get $M$. It is important to note that condition $2$, $3$ remain intact for all rows of $M$ as we do only row permutation of $B$ here. Again circular $1$'s property along rows also remain preserved for all the rows in $M$.

\vspace{0.57em}
\noindent Let $r$ be an arbitrary full row of $B$ and $S_{1}=\{i|i_{1}=1,i\in \mbox{row}(D_{1})\}$, $S_{2}=\{i|i_{1}>1,i_{2}<n, i\in \mbox{row}(D_{1})\}$, $S_{3}=\{i|i_{2}=n,i\in \mbox{row}(D_{1})\}$ be the set of all possible rows of the matrix $D_{1}$. It is not necessary that all of these sets are nonempty every time. But from monotone circular ordering of $D$ (see Lemma \ref{dm}) it is clear that any row of $S_{1}$ occurs prior to every row of $S_{2}$ and any row of $S_{2}$ occurs prior to every row of $S_{3}$. Let $i,i^{\prime}$ be the first, last row of $S_{1}$, $j,j^{\prime}$ be the first, last row of $S_{2}$, $k$ be the last row of $S_{3}$ and $m$ be the first row of $D_{2}$ occurred in $D$.  

\vspace{0.57em}

\noindent To show that $M$ satisfy monotone circular ordering, it is sufficient to verify that after inserting the full rows in each of the following cases, $M$ satisfies circular $1$'s along columns of it. In this sequel we come across three matrices $F_{1}, F_{2}, F_{3}$ as mentioned in Proposition \ref{prop1} (see Figure \ref{forbid}) which do not satisfy circular $1$'s property along columns for any permutation of rows. Here we use the same notation $F_{1}, F_{2}, F_{3}$ for every row permuted matrices obtained from the matrices of Example \ref{f1f2f3}. From condition $1$ as $B$ satisfies circular $1$'s property along columns, it is clear that $F_{1}, F_{2}, F_{3}$ can not appear as a submatrix in $B$ as well as in $M$.

\vspace{0.57em}
\begin{obs}\label{ob1}
Let $S_{1}, S_{2}, S_{3}\neq \emptyset$. Then $k_{1}\leq i_{2}+1$.
\end{obs}

\begin{proof}
It is known that $j_{1}\leq i_{2}+1$ and $k_{1}\leq j_{2}+1$ from condition $3$ applying for the rows $i,j,r$ and $k,j,r$ respectively. Now if $k_{1}>i_{2}+1$ then $i_{2}<j_{2}$, which imply $j_{1}\leq i_{2}+1\leq j_{2}$, i.e., $d_{j,i_{2}+1}=1$. Therefore $F_{1}$ get induced as a submatrix in $M$ formed by rows $i,j,k,r$ and columns $1,i_{2}+1,n$, which can not be true. Hence $k_{1}\leq i_{2}+1$.
\end{proof}

\vspace{1em}

\noindent \textbf{Method of inserting full rows in $D$ to construct $M$ so that it satisfy circular $1$'s property along columns of it:}
\vspace{0.57em}

\noindent {\bf\textit{Case (i):}} Let $D_{2}\neq \emptyset$. Several cases may arise here, all of them are listed below. 

\begin{enumerate}
\item \noindent Consider the case when $S_{2}=\emptyset$.

\noindent If $S_{1}\neq \emptyset$ then we put $r$ at end of $i^{\prime}$ in $M$. 

\vspace{0.27em}
\noindent Now if in a column $l$ of $M$, $(0,1,0,1)$ occurs along rows $x,r,y,z$ then 
$z_{4}\geq l>x_{2}$, which contradicts condition $2$ for rows $x,z$.

\vspace{0.367em}
\noindent Again if $S_{1}=\emptyset$ then we put $r$ at end of $D$.

\item Next we consider the case when $S_{2}\neq \emptyset$.

\begin{itemize}

\item If $S_{1}, S_{3}\neq \emptyset$ then from condition $2$ we get

\begin{equation}\label{eq2}
m_{4}\leq i_{2}, k_{1}\leq m_{3} \hspace{0.39em}\mbox{and} \hspace{0.39em}j_{1}\leq k_{1}. 
\end{equation}
\noindent  Again condition $3$ tells us    
 
\begin{equation}\label{eq3}
j_{1}\leq i_{2}+1 \hspace{0.39em}\mbox{and}\hspace{0.39em} k_{1}\leq j_{2}+1. 
\end{equation}

\vspace{0.57em}
\noindent \textbf{a)} If $\mathbf{k_{1}-m_{4}\leq 1}$ then we place $r$ just before $m$ in $D$. 

\vspace{0.39em}
\noindent Now if a column $l$ of $M$ contains $(0,1,0,1)$ along rows $x,y,z,r$ then $z_{1}-x_{2}>1$, which imply $x\in S_{1}$ and $z\in S_{3}$ otherwise if any one of $x,z\in S_{2}$ then condition $3$ gets contradicted for rows $x,z,r$. Now as $i\leq x$ in $D$, $z_{1}-m_{4}\geq z_{1}-i_{2}\geq z_{1}-x_{2}>1$ using (\ref{eq2}). Again $z\leq k$ helps us to conclude $z_{1}-m_{4}\leq k_{1}-m_{4}\leq 1$. Hence contradiction arises.

Again if there exists a column $l$ containing $(1,0,1,0)$ along rows $x,y,r,z$ in $M$. Then $y_{1}-z_{4}>1$ clearly. But as $y\leq k$ and $m\leq z$, we get $y_{1}-z_{4}\leq k_{1}-m_{4}\leq 1$, which is a contradiction. 

\vspace{0.27em}
\noindent Therefore $M$ satisfies circular $1$'s property along columns.

\vspace{0.377em}
\noindent\textbf{b)} Next we consider the case when $\mathbf{k_{1}-m_{4}>1}$.
\vspace{0.39em}

\noindent From Observation \ref{ob1} it is known that $k_{1}\leq i_{2}+1$. Therefore $m_{4}<i_{2}$.

\vspace{0.24em}
 
\noindent \textbf{(i)} Now if for all $m^{\prime}\geq m$, $m^{\prime}_{3}\leq i_{2}+1$ then we place $r$ at end of $D$.
  \vspace{0.3em}
  
\noindent \textbf{(ii)} Next we consider the case when there exist some $m^{\prime}\in \mbox{row}(D_{2})$
satisfying $m^{\prime}_{3}>i_{2}+1$ (we take first such row as $m^{\prime}$). In this case we place $r$ just before $m^{\prime}$ in $D$.

\vspace{0.37em}
 \noindent \textbf{Claim 1: $\mathbf{k_{1}-m^{\prime}_{4}\leq 1}$.}

\vspace{0.27em}

\noindent \textit{proof:} If not let $k_{1}-m^{\prime}_{4}>1$. From construction of the matrix $D$, it is known that $i_{2}\leq j_{2}$. Now when $j_{2}>i_{2}$, $F_{2}$ get induced as a submatrix in $M$ consisting of rows $i,j,k,m^{\prime},r$ and columns $1,k_{1}-1,i_{2}+1,n$ if $j_{1}<k_{1}$ and $F_{1}$ is induced consisting of rows $i,j,m^{\prime},r$ and columns $k_{1}-1,i_{2}+1,n$ if $j_{1}=k_{1}$. Again when $j_{2}=i_{2}$, $F_{1}$ get induced by the rows $j,k,m^{\prime},r$ and columns $1, k_{1}-1,i_{2}+1$ in $M$. Hence contradiction arises in both of these cases. Therefore $k_{1}-m^{\prime}_{4}\leq 1$.

\vspace{0.37em}

\noindent Clearly $m^{\prime}>m$ in this case follows from above claim.

\vspace{0.37em}
\noindent \textbf{Claim 2: If $m^{\prime}_{4}<i_{2}$ then $k_{1}<i_{2}+1$. Moreover, if $d_{m,l}=0$ for any $l, k_{1}\leq l\leq i_{2}$, then $d_{m^{\prime},l}=1$.}

\vspace{0.27em}

\noindent \textit{proof:} If $m^{\prime}_{4}<i_{2}$ then $k_{1}<i_{2}+1$ as $k_{1}-m^{\prime}_{4}\leq 1$ from Claim $1$. Now in this case one can able to show if $d_{m,l}=0$ for any $l, k_{1}\leq l\leq i_{2}$, then $d_{m^{\prime},l}=1$. 

\vspace{0.27em}
\noindent In other cases when $j_{2}>i_{2}$, we get $F_{3}$ (see Figure \ref{forbid}) as a induced submatrix in $M$ formed by rows $i,j,k,m,m^{\prime},r$ and columns $1,k_{1}-1,l,i_{2}+1,n$ when $j_{1}<k_{1}$ and $F_{2}$ get induced by rows $i,j,m,m^{\prime},r$ and columns $k_{1}-1,l,i_{2}+1,n$ when $j_{1}=k_{1}$. Again if $j_{2}=i_{2}$, $F_{2}$ is formed by rows $j,k,m,m^{\prime},r$ and columns $1,k_{1}-1,l,i_{2}+1$ when $j_{1}<k_{1}$ and $F_{1}$ is induced by rows $j,m,m^{\prime},r$ and columns $k_{1}-1,l,i_{2}+1$ when $j_{1}=k_{1}$ where $k_{1}\leq l\leq i_{2}$. Both of these cases introduce contradiction as $B$ satisfies circular $1$'s property along columns follows from condition $1$.

\vspace{0.27em}

\vspace{0.27em}
\noindent We now prove that after placing $r$ in above way $M$ will satisfy circular $1$'s property along columns.
If $m^{\prime}_{4}<i_{2}$ then the verification is trivial from Claim $2$. When $m^{\prime}_{4}=i_{2}$ then if a column ($l$ say) of $M$ contains $(0,1,0,1)$ along rows $x,y,z,r$, then $z$ must come from $S_{3}$ or $\mbox{row}(D_{2})$ and $x$ must come from $S_{1}$, otherwise condition $3$ gets contradicted for rows $x,z,r$. Now if $z\in \mbox{row} (D_{2})$,
then $z_{3}-x_{2}>1$. But as $i<x$, $i_{2}\leq x_{2}$ which imply $z_{3}-i_{2}\geq z_{3}-x_{2}>1$. But this situation can not arise as $m^{\prime}$ is the first such row satisfying $m^{\prime}_{3}>i_{2}+1$. Again if $z\in S_{3}$, then $z_{1}-x_{2}>1$. This imply $k_{1}-m^{\prime}_{4}\geq z_{1}-m^{\prime}_{4}\geq z_{1}-x_{2}>1$, which is not true from Claim $1$. Hence $M$ satisfies circular $1$'s property along columns.

\vspace{0.57em}

\item  If $S_{1}\neq \emptyset, S_{3}=\emptyset$.  Applying condition $3$ on rows $i,j^{\prime},r$ we get $j^{\prime}_{1}\leq i_{2}+1$. Again from condition $2$ we get $m_{4}\leq i_{2}$.

\vspace{0.27em}

\noindent a) If $\mathbf{j^{\prime}_{1}-m_{4}\leq 1}$ then we place $r$ just before $m$ in $D$.

\vspace{0.27em}

\noindent b) We now consider the case when $\mathbf{j^{\prime}_{1}-m_{4}>1}$.
\vspace{0.27em}

If for all $m^{\prime}\geq m$ in $D_{2}$, $m^{\prime}_{3}\leq i_{2}+1$, then we place $r$ at end of $D$. Next we consider the case when there exist some $m^{\prime}\in \mbox{row}(D_{2})$ satisfying $m^{\prime}_{3}>i_{2}+1$ (if there exist more than one such rows take first such row as $m^{\prime}$).  In this case we place $r$ just before $m^{\prime}$ in $D$ whether $m^{\prime}_{4}<i_{2}$ or $m^{\prime}_{4}=i_{2}$.

\vspace{0.27em}
\noindent \textbf{Claim $1$:} $\mathbf{j^{\prime}_{1}-m^{\prime}_{4}\leq 1}$.

\vspace{0.27em}
\textit{proof:} Now if $j^{\prime}_{1}-m^{\prime}_{4}>1$ then $m^{\prime}_{3}-j^{\prime}_{2}\leq 1$ follows from condition $3$ applying on the rows $m^{\prime},j,r$. Hence 
$i_{2}<m^{\prime}_{3}-1\leq j^{\prime}_{2}$. Therefore $F_{1}$ (see Figure \ref{forbid}) get induced in $M$ as a submatrix consists of rows $i,j^{\prime},m^{\prime},r$ and columns $j^{\prime}_{1}-1,i_{2}+1,n$. But this is not possible as $B$ satisfies circular $1$'s property along columns follows from condition $1$. 
Therefore $j^{}_{1}-m^{\prime}_{4}\leq 1$.

\vspace{0.29em}
\noindent Hence from Claim $1$, $m^{\prime}>m$ clearly.

\vspace{0.27em}
\textbf{Claim $2$: If $m^{\prime}_{4}<i_{2}$, then if $d_{m,l}=0$ for some $j^{\prime}_{1}\leq l\leq i_{2}$, then $d_{m^{\prime},l}=1$} 
 
\vspace{0.27em}
 
\noindent \textit{proof:} 
Otherwise $F_{2}$ (see Figure \ref{forbid}) get induced by the rows $i,j^{\prime},m,m^{\prime},r$ and columns $j^{\prime}_{1}-1, l, i_{2}+1, n$ for some $j^{\prime}_{1}\leq l \leq i_{2}$ when $j^{\prime}_{2}> i_{2}$ and $F_{1}$ is induced by the rows
$j^{\prime},m,m^{\prime},r$ and columns $j^{\prime}_{1}-1,l,i_{2}+1$ when $j^{\prime}_{2}=i_{2}$. In both of these cases we get contradiction. 

\vspace{0.3em}
\noindent The proof that circular $1$'s property along columns is satisfied by $M$ is similar to the previous case, which can be verified using above claims.

\vspace{0.37em}

\item If $S_{1}=\emptyset$, $S_{3}\neq \emptyset$.  From condition $2$ and condition $3$ applying on rows $j,k,r$ we get $j_{1}\leq k_{1}\leq j_{2}+1$.

\vspace{0.37em}
\vspace{0.27em}

\noindent a) If $\mathbf{k_{1}-m_{4}\leq 1}$ then we place $r$ just before $m$ in $D$.
\vspace{0.27em}

\noindent b) Now consider the case when $\mathbf{k_{1}-m_{4}>1}$.

\vspace{.31em}
\noindent  Then if for all $m^{\prime}\geq m$ in $D_{2}$, $m^{\prime}_{3}\leq j_{2}+1$, then we place $r$ at end of $D$. If there exist some $m^{\prime}\in \mbox{row} (D_{2})$ satisfying $m^{\prime}_{3}>j_{2}+1$ (take first such row as $m^{\prime}$) then $j_{1}-1\leq m^{\prime}_{4}\leq j_{2}$, follows from condition $2,3$ applying on rows $m^{\prime},j,r$. In this case we place $r$ just before $m^{\prime}$ in $D$.

\vspace{0.39em}

\noindent \textbf{Claim $1$: $\mathbf{k_{1}-m^{\prime}_{4}\leq 1}$}.

\textit{proof:} If $k_{1}-m^{\prime}_{4}>1$ then $j_{1}<k_{1}$ clearly. Therefore $F_{1}$ get induced in $M$ by the rows $\{j,k,m^{\prime},r\}$ and columns $j_{1}-1,k_{1}-1,j_{2}+1$, which introduces contradiction as $B$ satisfies circular $1$'s property along columns from condition $1$.

\vspace{0.27em}
\noindent Therefore $m^{\prime}>m$ clearly from above claim.
\vspace{0.27em}

\noindent \textbf{Claim $2$: If $m^{\prime}_{4}<j_{2}$, then if $d_{m,l}=0$ then $d_{m^{\prime},l}=1$ for all $k_{1}\leq l\leq j_{2}$}.

\vspace{0.27em}

 \noindent\textit{proof}
 Otherwise $F_{2}$ get induced in $M$ by the rows $j,k,m,m^{\prime},r$ and columns $1,k_{1}-1,l,j_{2}+1$ when $j_{1}<k_{1}$ and $F_{1}$ get induced by rows $j,m,m^{\prime},r$ and columns $k_{1}-1,l,j_{2}+1$ when $j_{1}=k_{1}$, which is not true as $B$ satisfies circular $1$'s property along columns. 
 
 \vspace{0.27em}
 \noindent  Following above claims analogously one can verify that $M$ satisfies circular $1$'s property along columns. 
 
 \vspace{0.27em}
 
\item If $S_{1}=S_{3}=\emptyset$. Then proceeding similarly as the previous cases one can able to place full row $r$ in $M$ so that it satisfy circular $1$'s property along columns. 
\end{itemize}

\end{enumerate}

\vspace{1em}

\noindent {\bf\textit{Case (ii):}} Let $D_{2}=\emptyset$. 

\vspace{0.37em}

\noindent If $S_{1}\neq \emptyset$ and $S_{2}=\emptyset$ we place $r$ after $i^{\prime}$ in $M$ otherwise we place $r$ at end of $D$ in $M$.

\vspace{0.37em}

\noindent Now if there is a column $l$ in $M$ containing $(0,1,0,1)$ along rows $x,y,z,r$ then $z_{1}-x_{2}>1$. Therefore from condition $3$ applying for the rows $x,z,r$ one can get $x\in S_{1}$ and $z\in S_{3}$, i.e., $S_{1}, S_{3}\neq \emptyset$. Again as row $r$ occurs after row $z$ in $M$, $S_{2}$ must be nonempty according to the preceding rule of placement of $r$. Now from definition of $i,k$ we get $i\leq x$ and $z\leq k$ in $D$. Hence $z_{1}-x_{2}\leq k_{1}-i_{2}\leq 1$ (see Observation \ref{ob1}), which is a contradiction.

\vspace{0.6em}
\noindent Now it is easy to conclude from Definition \ref{MCO} that constructing $M$ by above method actually gives the monotone circular ordering of it.

\begin{figure}\centering
\[
\begin{array}{c|ccccccc|}
\multicolumn{1}{c}{} & v_1 & \multicolumn{1}{c}{v_2} & \multicolumn {1}{c}{v_3} & \multicolumn{1}{c}{v_4} &\multicolumn{1}{c}{v_{5}} &\multicolumn{1}{c}{v_{6}} &\multicolumn{1}{c}{v_{7}} \\
\cline{2-8}
v_1 & 1 & 1 & 1 & 1 & 1 & 1 & 1\\
v_2 & 1 & 1 & 1 & 1 & 1 & 0 & 0\\ 
v_3 & 0 & 0 & 1 & 1 & 1 & 1 & 0\\
v_4 & 0 & 0 & 1 & 1 & 1 & 0 & 0 \\
v_5 & 1 & 1 & 1 & 1 & 1 & 1 & 1 \\
v_6 & 1 & 1 & 1 & 0 & 0 & 1 & 1\\
v_7 & 1 & 1 & 0 & 0 & 0 & 1 & 1\\
\cline{2-8}
\end{array}
\qquad
\begin{array}{c|ccccccc|}
\multicolumn{1}{c}{} & v_2 & \multicolumn{1}{c}{v_1} & \multicolumn {1}{c}{v_3} & \multicolumn{1}{c}{v_5} &\multicolumn{1}{c}{v_{4}} &\multicolumn{1}{c}{v_{6}} &\multicolumn{1}{c}{v_{7}} \\
\cline{2-8}
v_2 & 1 & 1 & 1 & 1 & 1 & 0 & 0\\ 
v_4 & 0 & 0 & 1 & 1 & 1 & 0 & 0 \\
v_3 & 0 & 0 & 1 & 1 & 1 & 1 & 0\\
v_5 & 1 & 1 & 1 & 1 & 1 & 1 & 1 \\
v_6 & 1 & 1 & 1 & 0 & 0 & 1 & 1\\
v_7 & 1 & 1 & 0 & 0 & 0 & 1 & 1\\
v_1 & 1 & 1 & 1 & 1 & 1 & 1 & 1\\
\cline{2-8}
\end{array}
\]
\[
\begin{array}{c|ccccccc|}
\multicolumn{1}{c}{} & v_2 & \multicolumn{1}{c}{v_1} & \multicolumn {1}{c}{v_3} & \multicolumn{1}{c}{v_5} &\multicolumn{1}{c}{v_{4}} &\multicolumn{1}{c}{v_{6}} &\multicolumn{1}{c}{v_{7}} \\
\cline{2-8}
v_2 & 1 & 1 & 1 & 1 & 1 & 0 & 0\\ 
v_4 & 0 & 0 & 1 & 1 & 1 & 0 & 0 \\
v_3 & 0 & 0 & 1 & 1 & 1 & 1 & 0\\
v_1 & 1 & 1 & 1 & 1 & 1 & 1 & 1\\
v_5 & 1 & 1 & 1 & 1 & 1 & 1 & 1 \\
v_7 & 1 & 1 & 0 & 0 & 0 & 1 & 1\\
v_6 & 1 & 1 & 1 & 0 & 0 & 1 & 1\\
\cline{2-8}
\end{array}
\]

\[
\begin{array}{c|c|c|c|c|c|c|c|ccc}
\multicolumn{1}{c}{} & \multicolumn{1}{c}{p_2} & \multicolumn{1}{c}{p_{1}} & \multicolumn{1}{c}{p_3} & \multicolumn{1}{c}{p_5} &\multicolumn{1}{c}{p_{4}}& \multicolumn{1}{c}{p_{6}} & \multicolumn{1}{c}{p_{7}}  & \multicolumn{1}{c}{} & \\

\multicolumn{1}{c}{}&\multicolumn{1}{c}{1.87}&\multicolumn{1}{c}{2.75}&\multicolumn{1}{c}{3.91}&\multicolumn{1}{c}{4}&\multicolumn{1}{c}{5} & \multicolumn{1}{c}{8.88}& \multicolumn{1}{c}{10} & \multicolumn{1}{c}{}& \\

\multicolumn{1}{c}{} & \multicolumn{1}{c}{v_2} & \multicolumn{1}{c}{v_{1}} & \multicolumn{1}{c}{v_3} & \multicolumn{1}{c}{v_5} &\multicolumn{1}{c}{v_{4}}& \multicolumn{1}{c}{v_6} & \multicolumn{1}{c}{v_7}  &\multicolumn{1}{c}{} &  \\
\cline{2-8}
v_2 & \multicolumn{1}{c}{1} &\multicolumn{1}{c}{2} & \multicolumn{1}{c}{3} & \multicolumn{1}{c}{4} & 5 & \multicolumn{1}{c}{6}& &  &[1.87,6]& I_{2} \\ 

\cline{2-3}

v_4 & \multicolumn{1}{c}{} & & \multicolumn{1}{c}{} & \multicolumn{1}{c}{} &  &\multicolumn{1}{c}{7} &  & &[3.5,7]& I_{4} \\ 

\cline{7-7}

v_3 & \multicolumn{1}{c}{} & & \multicolumn{1}{c}{} & \multicolumn{1}{c}{} & \multicolumn{1}{c}{}  & 8&9  & &[3.58,9]& I_{3} \\ 
\cline{2-3}
\cline{8-8}

v_1 & \multicolumn{1}{c}{} & & \multicolumn{1}{c}{} & \multicolumn{1}{c}{} & \multicolumn{1}{c}{}  & \multicolumn{1}{c}{}& 10 & 11&[3.66,2.75]& I_{1} \\ 

v_5 & \multicolumn{1}{c}{} & & \multicolumn{1}{c}{} & \multicolumn{1}{c}{} & \multicolumn{1}{c}{}  & \multicolumn{1}{c}{}&  &12 &[3.75,2.83]& I_{5} \\

\cline{4-6}

v_7 & \multicolumn{1}{c}{} & & \multicolumn{1}{c}{} & \multicolumn{1}{c}{} &  &\multicolumn{1}{c}{} & &13 &[8.77,2.91]& I_{7} \\
\cline{4-4} 

v_6 & \multicolumn{1}{c}{} &\multicolumn{1}{c}{} &  & \multicolumn{1}{c}{} &  & \multicolumn{1}{c}{}& & 14 &[8.88,3.91]& I_{6} \\ 

\cline{2-8}
\end{array}
\]
\caption{Augmented adjacency matrix $A=A(G)$ of a proper circular-arc catch digraph $G$ (left up), the row and column permutated matrix $B$ from $A$ satisfying circular $1$'s property (along both rows and columns) (right up), row permuted matrix $M$ from $B$ satisfying monotone circular ordering (center), a proper circular-arc catch digraph representation of $G$ (bottom)}\label{pcad}
\end{figure}

\begin{lem}
Monotone circular ordering is satisfied by $M$.
\end{lem}

\noindent Moreover, we can separate the zeros from the ones of $M$ by drawing upper and lower stairs (see Figure \ref{pcad}) in it as it satisfies monotone circular ordering. Note that permutation of rows of $B$ does not effect the adjacency of the graph $G$, if the $i^{\text{th}}$ row of $B$ is shifted to 
the $k^{\text{th}}$ row of $M$ then 
\begin{equation}\label{l2}
b_{i,j}=1\Longleftrightarrow m_{k,j}=1
\end{equation}

\vspace{0.3em}
\noindent \textbf{Step III}: Defining circular-arcs $I_{i}=[a_{i},b_{i}]$ for the $i^{\text{th}}$ row of $M$.

\vspace{0.57em}

\noindent \noindent First we separate the $1$'s from the $0$'s by drawing  polygonal paths (known as stairs) as in Figure \ref{pcad}. The existence of the stairs is guaranteed from construction of $M$. Note that the upper stair is drawn from top left to bottom right of the matrix and there can exist atmost three stairs in $M$. We associate natural numbers in increasing order on the upper stair of $M$ starting from the top of first column. Let $r_{i}$ be the number on the stair in the $i^{\text{th}}$ row and $l_{i}$ be the number on the stair in the $i^{\text{th}}$ column (e.g., in Figure \ref{pcad}, $l_{i}=1,2,3,4,5,8,10$ and $r_{i}=6,7,9,11,12,13,14$
for $i=1,2,3,4,5,6,7$ respectively). Let $i_{1},i_{2}(i_{3},i_{4})$ denote the first and last column numbers where the circular stretch of ones starts and ends for rows of $1$st ($2$nd) type in $M$. From construction of $M$ one can note that the rows of $1$st type occur consecutively before the $2$nd type rows. Further we prove in Step II
that $M$ satisfies monotone circular ordering. Then by definition of $l_{i},r_{i}$ we get $r_{i}>l_{j}$ for all $j$ belonging to the circular interval 
$[i_{1},i_{2}]$ or $[i_{3},i_{4}]$. Thus 
\begin{equation}
m_{i,j}=1\Rightarrow r_{i}>l_{j}
\end{equation}

\vspace{0.3em}
\noindent Let $s_{i}=n+\text{max}\{j | j_{1}=i_{1}\hspace{0.1em}\mbox{or} \hspace{0.1em}j_{3}=i_{1} \hspace{0.1em}\mbox{or} \hspace{0.1em} j_{4}=i_{1}\}$,
$s^{\prime}_{i}=n+ \text{max}\{j | j_{3}=i_{3} \hspace{0.1em}\mbox{or} \hspace{0.1em} j_{4}=i_{3}\}$, $k_{i}=n+ \text{max}\{j | j_{4}=i_{4}\}$ where $|V|=n$.\\
(cf. in matrix M of Figure \ref{pcad}, for $i=3$, $i_{1}=3, i_{2}=6, s_{3}=14$ and for $i=7$, $i_{3}=6, i_{4}=3, s^{\prime}_{7}=k_{7}=14$.) 

\vspace{0.3em}
\noindent We assign circular-arc $I_{i}=[a_{i},b_{i}]$ where
 
\begin{equation}
a_{i}=\begin{cases} 
l_{i_{1}}+\dfrac{n+i-i_{1}}{s_{i}-i_{1}+1} \hspace{0.2em} \mbox{for rows of 1st type}\\
l_{i_{3}}+\dfrac{n+i-i_{3}}{s^{\prime}_{i}-i_{3}+1} \hspace{0.2em} \mbox{for rows of 2nd type}
\end{cases} 
\mbox{and} \hspace{0.35em} 
b_{i}=\begin{cases}
r_{i} \hspace{0.2em} \mbox{ for rows of 1st type}\\ 
l_{i_{4}}+\dfrac{n+i-i_{4}}{k_{i}-i_{4}+1} \hspace{0.2em} \mbox{for rows of 2nd type}\\
 \end{cases}
\end{equation}

\vspace{1em}

\noindent If the $i^{\text{th}}$ row is of first type, $b_{i}=r_{i}\geq l_{i_{1}}+1>a_{i}$ and when the $i^{\text{th}}$ row is of 2nd type $b_{i}=l_{i_{4}}+\dfrac{n+i-i_{4}}{k_{i}-i_{4}+1} <l_{i_{4}}+1\leq l_{i_{3}}<l_{i_{3}}+\dfrac{n+i-i_{3}}{s^{\prime}_{i}-i_{3}+1}=a_{i}$.

\vspace{1em}

\noindent Now if $i$ is a row of 1st type and $j$ is a row of 2nd type then from condition $2$ it follows that $j_{4}\leq i_{2}$ and $i_{1}\leq j_{3}$.
As $j_{4}\leq i_{2}$ we get $l_{j_{4}}\leq l_{i_{2}}$ and hence it follows that $b_{j}<l_{j_{4}}+1\leq r_{i}=b_{i}$. Moreover, $i<j$ in $M$ follows from its arrangement to satisfy monotone circular ordering. Hence $a_{i}<l_{i_{1}}+1\leq l_{j_{3}}<a_{j}$ when $i_{1}<j_{3}$
and $a_{i}<a_{j}$ when $i_{1}=j_{3}$ as $i<j$ and $s_{i}=s^{\prime}_{j}$.

\vspace{1 em}

\noindent Again if $i,j$ both are of $1$st type and $i<j$ in $M$ then from condition $2$ we get $i_{1}\leq j_{1}$ and $i_{2}\leq j_{2}$. Hence $l_{i_{1}}\leq l_{j_{1}}$. 
Now for $i_{1}<j_{1}$ we get $a_{i}<l_{i_{1}}+1\leq l_{j_{1}}<a_{j}$ and when $i_{1}=j_{1}$, $s_{i}=s_{j}$ imply $a_{i}<a_{j}$ as $i<j$.
Next $b_{i}=r_{i}<r_{j}=b_{j}$ (as $i<j$). 

\vspace{1 em}

\noindent Similarly one can verify that if $i,j$ both are of rows of $2$nd type and $i<j$ in $M$ then $a_{i}<a_{j}$ and $b_{i}<b_{j}$.
 
\vspace{1 em} 
\noindent Therefore the set of circular-arcs $[a_{i},b_{i}]$ gives proper representation.

\vspace{1 em}

\noindent\textbf{Step IV:} Defining points $p_{j}$ for each column $j$ of $M$.

\vspace{1 em}

\noindent Let $S_{j}=\{a_{i}|i_{1}=j \hspace{0.2em}\text{or}\hspace{0.2em} i_{3}=j\}$, $Q_{j}=\{b_{i}|i_{4}=j\}$. 
\vspace{0.5em}

\noindent We assign $p_{j}=\text{max}\{l_{j},\text{max}\{S_{j}\},\text{min}\{Q_{j}\}\}$.

\vspace{0.7em}

\noindent \textbf{Step V:} We show that $m_{i,j}=1$ if and only if $p_{j}\in I_{i}$.

\vspace{0.77em}

\noindent From definition of $S_{j},Q_{j}$ one can note $l_{j}<\text{max}\{S_{j}\},\text{min}\{Q_{j}\}<l_{j}+1$. Hence $l_{j}<p_{j}<l_{j}+1$ when at least one of $S_{j}$, $Q_{j}$ is nonempty and $p_{j}=l_{j}$ otherwise. Moreover, when $S_{j}, Q_{j}\neq \emptyset$ then $l_{j}<\text{max}\{S_{j}\}<\text{min}\{Q_{j}\}\leq p_{j}$ as $i<j$ in $M$ follows from monotone circular ordering.

\vspace{0.57em}

\noindent \textit{Case (i):} $m_{i,j}=1$.

\vspace{0.57em}
\noindent Now if $i$ is a row of $1$st type then $i_{1}\leq j\leq i_{2}$. Therefore when $i_{1}<j$, $l_{i_{1}}<a_{i}<l_{i_{1}}+1\leq l_{j}\leq p_{j}$. Again if $i_{1}=j$ then $a_{i}\in S_{j}$, which imply $S_{j}\neq \emptyset$. Thus $l_{j}=l_{i_{1}}<a_{i}\leq\text{max}\{S_{j}\}\leq p_{j}$. Moreover, $p_{j}<l_{j}+1\leq r_{i}=b_{i}$. Hence $p_{j}\in[a_{i},b_{i}]$ as $a_{i}<b_{i}$. 

\vspace{0.57em}
\noindent Next if $i$ is a row of $2$nd type then either $1\leq j\leq i_{4}$ or $i_{3}\leq j \leq n$ in $M$. 

\vspace{0.57em}

\noindent If $j<i_{4}$ then $l_{j}\leq p_{j}<l_{j}+1\leq l_{i_{4}}<b_{i}$. Again if $j=i_{4}$ then $b_{i}\in Q_{j}$ which imply $Q_{j}\neq \emptyset$. Hence
$p_{j}=\text{min} \{Q_{j}\}\leq b_{i}$. Next if $i_{3}<j$ then $l_{i_{3}}<a_{i}<l_{i_{3}}+1\leq l_{j}\leq p_{j}$. Again when $i_{3}=j$, $a_{i}\in S_{j}$ imply $a_{i}\leq \text{max}\{S_{j}\}\leq p_{j}$. Therefore $p_{j}\in [a_{i},b_{i}]$ as $b_{i}<a_{i}$.

\vspace{1 em}

\noindent\textit{Case (ii)}  $m_{i,j}=0$.

\vspace{0.59em}
\noindent Now if $i$ is a row of $1$st type then either $j<i_{1}$ or $j>i_{2}$. Now if $j<i_{1}$, $l_{j}\leq p_{j}<l_{j}+1\leq l_{i_{1}}<a_{i}$. Again if $j>i_{2}$, $b_{i}=r_{i}<l_{j}\leq p_{j}$. Hence $p_{j}\notin [a_{i},b_{i}]$ as $a_{i}<b_{i}$. Again when $i$ is a row of $2$nd type then $i_{4}<j<i_{3}$ imply $l_{i_{4}}<b_{i}<l_{i_{4}}+1\leq l_{j}\leq p_{j}<l_{j}+1\leq l_{i_{3}}<a_{i}$. Hence $p_{j}\notin I_{i}$ as $b_{i}<a_{i}$.

\vspace{1 em}

\noindent  This completes all the verifications. Thus from (\ref{l1}) and (\ref{l2}) it is easy to conclude now that $A(G)$ satisfies its adjacency with respect to the above circular-arc catch digraph representation $\{(I_{v},p_{v})|v\in V\}$. Therefore $G$ becomes a proper circular-arc catch digraph. Thus the sufficiency part is proved for Theorem \ref{pcacd}.
\end{proof}

\vspace{0.7em}

\begin{defn}\label{pocac}
Let $G=(V,E)$ be an oriented circular-arc catch digraph. We call $G$ as {\em  proper oriented circular-arc catch digraph} if there exists a circular-arc catch digraph representation $\{(I_{v},p_{v})|v\in V\}$ of $G$ where the circular-arcs are proper, i.e; no arc is properly contained in other. 
\end{defn}

\noindent Now using Theorem \ref{pca} we prove that the underlying undirected graph of a proper circular-arc catch digraph is a proper circular-arc graph.

\begin{thm}\label{prp1}
Let $G$ be a proper oriented circular-arc catch digraph. Then $U(G)$ is a proper circular-arc graph.
\end{thm}

\vspace{0.3em}
\noindent In the following we proceed to prove the above Theorem.

\vspace{0.37em}

\noindent Let $G=(V,E)$ be a proper oriented circular-arc catch digraph with representation $\{(I_{i},p_{i})|v_{i}\in V\}$ where $I_{i}=[a_{i},b_{i}]$ is a circular-arc and $p_{i}$ is a point within it. We can consider each point $p_{i}$ to be distinct otherwise if $p_{i}=p_{j}$ for some $i,j$ then $v_{i}v_{j},v_{j}v_{i}\in E$ leads to a contradiction as $G$ is oriented. Starting from a base point we arrange the vertices according to increasing (clockwise) order of $p_{i}$'s around the circle and consider the circular ordering as $<_{\sigma}$. For convenience henceforth we mean $v_{i}<_{\sigma}v_{j}\Leftrightarrow p_{i}<_{\sigma} p_{j}$.

\begin{defn}\label{fourset}
For each vertex $v_{i}\in V$ we define the following four sets associated to it,
$(S_{1})_{i}=\{v_{j}|p_{i}\in[p_{j},b_{j}]\}, (S_{2})_{i}=\{v_{j}|p_{j}\in[p_{i},b_{i}]\}, (S_{3})_{i}=\{v_{j}|p_{j}\in[a_{i},p_{i}]\}, (S_{4})_{i}=\{v_{j}|p_{i}\in[a_{j},p_{j}]\}$. 
\end{defn}

\vspace{0.3em}

\noindent Let us denote the increasing (clockwise) order of the left end points ($a_{i}$) and right end points ($b_{i}$) of the circular-arc $I_{i}$'s by $<_{r}$ and $<_{c}$ respectively.

\vspace{0.3em}

\noindent Below we list some important Lemmas which will lead us to the main proof.

\vspace{0.37em}

\begin{lem}\label{a}
 For any $i,j,k$, $p_{i}<_\sigma p_{k}<_{\sigma} p_{j} \Longleftrightarrow a_{i}<_{r} a_{k}<_{r} a_{j}\Longleftrightarrow b_{i}<_{c} b_{k}<_{c} b_{j}$ (up to some cyclic permutation) 
\end{lem}

\begin{proof}
As no arc can be properly contained in other, up to some cyclic permutation $a_{i}<_{r} a_{k}<_{r} a_{j}\Longleftrightarrow b_{i}<_{c} b_{k}<_{c} b_{j}$ clearly. \\
Let $p_{i}<_{\sigma} p_{k}<_\sigma p_{j}$. Now we assume on contrary that $a_{i}<_{r} a_{j}<_{r} a_{k}$.  As $a_{j}<_{r} a_{k}$, $p_{k}\in [a_{j},p_{j}]\subseteq I_{j}$. This imply $v_{j}v_{k}\in E$. Now as $G$ is oriented, $v_{k}v_{j}\notin E$, i.e., $p_{j}\notin I_{k}$. Hence $I_{k}=[a_{k},b_{k}]\subsetneq [a_{j},p_{j}]\subseteq I_{j}$, which is a contradiction as the arcs are proper. Similar contradiction will arise for two other permutations of $a_{i}$'s. 
\end{proof}

\vspace{0.3em}
\noindent Henceforth we will use $<_{\sigma}$ throughout this proof instead of $<_{r}$ and $<_{c}$ for convenience. 

\begin{lem}\label{c}
Let $v_{j}<_{\sigma} v_{i}$ and $v_{i}v_{j}\in E$ then $v_{j}\in (S_{3})_{i}$ if and only if $v_{i}\in (S_{4})_{j}$. Again if $v_{i}<_{\sigma} v_{j}$ then $v_{j}\in (S_{2})_{i}$ if and only if 
$v_{i}\in (S_{1})_{j}$.
\end{lem}

\begin{proof}
Follows from Definition \ref{fourset}.
\end{proof}

\begin{lem}\label{g}
If $v_{i}\in (S_{1})_{j}$ or $(S_{3})_{j}$ then $v_{i}\notin (S_{2})_{j}$ or $(S_{4})_{j}$. Again if $v_{i}\in (S_{2})_{j}$ or $(S_{4})_{j}$ then $v_{i}\notin (S_{1})_{j}$ or $(S_{3})_{j}$.
\end{lem} 
 
\begin{proof}
If $v_{i}\in (S_{1})_{j}$ then $p_{j}\in [p_{i},b_{i}]\subseteq I_{i}$. As $G$ is oriented $p_{i}\notin I_{j}$. This imply $v_{i}\notin (S_{2})_{j}$. Again when $p_{j}\in [p_{i},b_{i}]$ then $p_{j}\notin [a_{i},p_{i}]$ as $p_{i}\neq p_{j}$. Therefore $v_{i}\notin (S_{4})_{j}$. Next when $v_{i}\in (S_{3})_{j}$ then $p_{i}\in [a_{j},p_{j}]\subseteq I_{j}$ imply $p_{j}\notin I_{i}$ as $G$ is oriented. Hence $v_{i}\notin (S_{4})_{j}$.  Again as $p_{i}\neq p_{j}$, $p_{i}\in [a_{j},p_{j}]$ and $p_{i}\in [p_{j},b_{j}]$ can not happen simultaneously. Therefore $v_{i}\notin (S_{2})_{j}$. Similar verifications will follow when $v_{i}\in (S_{2})_{j}$ or $(S_{4})_{j}$.
\end{proof}

\begin{lem}\label{e}
Let $v_{i}<_\sigma v_{j}$ such that $v_{j}\in (S_{2})_{i}$. Then for all $v_{k}$ such that $v_{i}<_\sigma v_{k}<_\sigma v_{j}$, $v_{i}v_{k}\in E$ satisfying
$v_{k}\in (S_{2})_{i}$ and $v_{k}v_{j}\in E$ satisfying $v_{j}\in (S_{2})_{k}$.    
\end{lem}

\begin{proof}
As $v_{j}\in (S_{2})_{i}$, $v_{i}v_{j}\in E$. Now as $p_{i}<_\sigma p_{k}<_\sigma p_{j}$, $p_{k}\in [p_{i},p_{j}]\subsetneq [p_{i},b_{i}]\subseteq I_{i}$. As $p_{k}\in [p_{i},b_{i}]$ we have $v_{i}v_{k}\in E$ satisfying $v_{k}\in (S_{2})_{i}$. Now as $p_{i}<_\sigma p_{k}<_{\sigma} p_{j}$, from Lemma \ref{a} we get $b_{i}<_\sigma b_{k}<_{\sigma} b_{j}$.
Hence $p_{j}\in [p_{k},b_{i}]\subsetneq [p_{k},b_{k}]\subseteq I_{k}$. Therefore we get $v_{k}v_{j}\in E$ satisfying  $v_{j}\in (S_{2})_{k}$.
\end{proof}
 
\begin{lem}\label{d}
Let $v_{i}<_\sigma v_{j}$ such that $v_{j}\in (S_{4})_{i}$. Then for all $v_{k}$ such that $v_{i}<_\sigma v_{k}<_\sigma v_{j}$, $v_{j}v_{k}\in E$ satisfying $v_{j}\in (S_{4})_{k}$ and $v_{k}v_{i}\in E$ satisfying $v_{k}\in (S_{4})_{i}$.
\end{lem}

\begin{proof}
 As $v_{j}\in (S_{4})_{i}$, $v_{j}v_{i}\in E$ clearly. Now as $p_{i}<_\sigma p_{k}<_\sigma p_{j}$, $p_{k}\in [p_{i},p_{j}]\subsetneq [a_{j},p_{j}]\subseteq I_{j}$. This imply $v_{j}v_{k}\in E$ and $v_{k}\in (S_{3})_{j}$. Therefore from Lemma \ref{c} we get $v_{j}\in (S_{4})_{k}$. Now as $p_{i}<_{\sigma} p_{k}<_\sigma p_{j}$ imply $a_{i}<_{\sigma} a_{k}<_\sigma a_{j}$ from Lemma \ref{a}, we get $p_{i}\in [a_{j},p_{k}]\subsetneq [a_{k},p_{k}]\subseteq I_{k}$. Thus we get $v_{k}\in (S_{4})_{i}$ satisfying $v_{k}v_{i}\in E$. 
\end{proof}

\begin{lem}\label{f}
If $(S_{2})_{i}\neq\emptyset$ then $(S_{4})_{i}=\emptyset$ and if $(S_{4})_{i}\neq\emptyset$ then $(S_{2})_{i}=\emptyset$. Again  $(S_{1})_{i}, (S_{3})_{i}$ can not be nonempty simultaneously.
\end{lem}

\begin{proof}
On contrary let us assume $v_{k}\in (S_{2})_{i}$ and $v_{j}\in (S_{4})_{i}$. 
Then $p_{k}\in [p_{i},b_{i}]$ and $p_{i}\in [a_{j},p_{j}]$ imply $p_{i}<_{\sigma} p_{k}<_{\sigma} p_{j}$. Hence from Lemma \ref{a} we get $a_{i}<_{\sigma} a_{k}<_{\sigma} a_{j}$. Therefore $p_{i}\in [a_{j},p_{k}]\subsetneq [a_{k},p_{k}]\subseteq I_{k}$ imply $v_{k}v_{i}\in E$. But this introduces contradiction as
$v_{i}v_{k}\in E$ and $G$ is oriented. Similar contradiction will arise if $(S_{1})_{i}, (S_{3})_{i}$  both are nonempty.
\end{proof}

\vspace{1em}
\newpage
\noindent \textbf{Proof of Theorem \ref{prp1}}
\begin{proof} 
\noindent From Lemma \ref{f}, as  at least one among $(S_{2})_{i}, (S_{4})_{i}$ is empty,
we can conclude from Lemma \ref{e} and Lemma \ref{d} that vertices of $(S_{2})_{i}$ (or $(S_{4})_{i}$) occur in a consecutive manner clockwise right to $v_{i}$ in $<_\sigma$. Similarly using Lemma \ref{c} one can verify that
vertices of $(S_{1})_{i}$ (or $(S_{3})_{i}$) occur consecutively anticlockwise left to $v_{i}$ in $<_{\sigma}$. Moreover, from Lemma \ref{e} it follows that if $v_{i}<_\sigma v_{j}$ satisfying $v_{i}v_{j}\in E$ then $\{v_{i},v_{i+1},\hdots, v_{j}\}$ form a transitive tournament. Similarly, when $v_{i}<_\sigma v_{j}$ satisfying $v_{j}v_{i}\in E$ then also $\{v_{i},v_{i+1},\hdots, v_{j}\}$ form a transitive tournament according to Lemma \ref{d}.

\vspace{0.3em}
\noindent We now assign a new orientation $\mu$ for each edge of $U(G)$ and $E_{\mu}$ be the new set of arcs. After the vertices of $V$ are arranged in the circular ordering $<_{\sigma}$, for each vertex $v_{i}\in V$ we assign edge direction from $v_{i}$ to each vertex of $(S_{2})_{i}$ (or $(S_{4})_{i}$) considering $(S_{2})_{i}$ (or $(S_{4})_{i}$) as the outset. Next 
from vertices of $(S_{1})_{i}$ (or $(S_{3})_{i}$) we assign edge direction towards $v_{i}$ considering $(S_{1})_{i}$ (or $(S_{3})_{i}$) as the inset of $v_{i}$.

\vspace{0.3em}
\noindent First of all, we verify that $\mu$ is well defined on the set $E_{\mu}$.  Let $v_{i}<_{\sigma}v_{j}$ satisfying $v_{i}v_{j}\in E_{\mu}$. Then $v_{j}\in (S_{2})_{i}$ or $(S_{4})_{i}$. Now if $v_{j}\in (S_{2})_{i}$ then $v_{i}\in (S_{1})_{j}$ from Lemma \ref{c}. Again if $v_{j}\in (S_{4})_{i}$ then $v_{j}v_{i}\in E$ from definition of $(S_{4})_{i}$ and hence from Lemma \ref{c} we get $v_{i}\in (S_{3})_{j}$. Now from Lemma \ref{g} we get when $v_{i}\in (S_{1})_{j}$ or $(S_{3})_{j}$ then $v_{i}\notin (S_{2})_{j}$ or $(S_{4})_{j}$. Therefore $v_{i}$ can not belong to the outset of $v_{j}$ from the above paragraph. Hence $v_{j}v_{i}\notin E_{\mu}$.

\vspace{0.3em}
\noindent Now to prove $U(G)$ has a round orientation, it is sufficient to show that when $v_{i}<_{\sigma} v_{j}$ satisfying $v_{i}v_{j}\in E_{\mu}$ then $\{v_{i},v_{i+1},\hdots, v_{j}\}$ forms a transitive tournament with respect to $\mu$ as the other part is similar.

\vspace{0.7em}
\noindent Let $v_{i}<_{\sigma} v_{j}$ satisfy $v_{i}v_{j}\in E_{\mu}$. Then $v_{j}\in (S_{2})_{i}$ or $(S_{4})_{i}$ from the definition of $\mu$. If $v_{j}\in (S_{2})_{i}$ then for all $v_{k_{1}}, v_{k_{2}}$ satisfying $v_{i}<_{\sigma} v_{k_{1}}<_{\sigma} v_{k_{2}}<_{\sigma} v_{j}$, $v_{j}\in (S_{2})_{k_{1}}, (S_{2})_{k_{2}}$ and $v_{k_{1}}, v_{k_{2}}\in (S_{2})_{i}$ from Lemma \ref{e}.
Therefore $v_{i}v_{k_{1}},v_{i}v_{k_{2}},v_{k_{1}}v_{j},v_{k_{2}}v_{j}\in E_{\mu}$ from definition of $\mu$.  Now as we have $v_{j}\in (S_{2})_{k_{1}}$ and $v_{k_{1}}<_{\sigma} v_{k_{2}}<_{\sigma} v_{j}$, from Lemma \ref{e} it follows that $v_{k_{2}}\in (S_{2})_{k_{1}}$. Hence $v_{k_{1}}v_{k_{2}}\in E_{\mu}$.  

\noindent Again if $v_{j}\in (S_{4})_{i}$ then applying Lemma \ref{d} one can able to show that for all $v_{k_{1}},v_{k_{2}}$ satisfying $v_{i}<_{\sigma} v_{k_{1}}<_{\sigma} v_{k_{2}}<_{\sigma} v_{j}$, $v_{j}\in (S_{4})_{k_{1}}, (S_{4})_{k_{2}}$ and $v_{k_{1}}, v_{k_{2}}\in (S_{4})_{i}$. Therefore $v_{i}v_{k_{1}}, v_{i}v_{k_{2}},v_{k_{1}}v_{j},v_{k_{2}}v_{j}\in E_{\mu}$ from definition of $\mu$. Now as $v_{k_{2}}\in (S_{4})_{i}$ and $v_{i}<_{\sigma} v_{k_{1}}<_{\sigma} v_{k_{2}}$ from Lemma \ref{d} it follows that $v_{k_{2}}\in (S_{4})_{k_{1}}$. Hence $v_{k_{1}}v_{k_{2}}\in E_{\mu}$.

\vspace{0.7em}

\noindent Thus we are able to show that vertices of the outset of $v_{i}$ form a transitive tournament. Therefore from the above verifications it follows that $U(G)$ has a round orientation and hence by Theorem \ref{pca}, $U(G)$ becomes a proper circular-arc graph.
\end{proof}

\noindent Although we see in Theorem \ref{prp1} that underlying graph of a proper oriented circular-arc catch digraph is a proper circular-arc graph, but the converse is not always true.

\vspace{0.4em}
\begin{exmp}
Consider the digraph $G_{a}=(V,E)$ in Figure \ref{counter}.

\begin{figure}[ht]
\centering
\begin{tikzpicture}[scale=0.6]
\draw[-][draw=black,thick] (1,0) -- (5.2,0);
\draw[-][draw=black,thick] (1,-3.6) -- (5.2,-3.6);
\draw[-][draw=black,thick] (1,0) -- (1,-3.6);
\draw[-][draw=black,thick] (5.2,0) -- (5.2,-3.6);
\draw[-][draw=black,thick] (1,-3.6) -- (-2.2,-1.8);
\draw[-][draw=black,thick] (1,0) -- (-2.2,-1.8);

\node[above] at (1,0) {\tiny{$v_{1}$}};
\node[above] at (5.2,0) {\tiny{$v_{2}$}};
\node[below] at (5.2,-3.6) {\tiny{$v_{3}$}};
\node[below] at (1,-3.6) {\tiny{$v_{4}$}};
\node[left] at (-2.2,-1.8) {\tiny{$v_{5}$}};

\draw [fill=black] (1,0) circle [radius=0.1];
\draw [fill=black] (5.2,0) circle [radius=0.1];
\draw [fill=black] (1,-3.6) circle [radius=0.1];
\draw [fill=black] (5.2,-3.6) circle [radius=0.1];
\draw [fill=black] (-2.2,-1.8) circle [radius=0.1];

\draw[- angle 90](1,0)--(2.6,0);
\draw[- angle 90] (1,0)--(1,-1.8);
\draw[- angle 90](5.2,-3.6)--(5.2,-1.8);
\draw[- angle 90](5.2,-3.6)--(2.6,-3.6);
\draw[- angle 90](1,0)--(-0.6,-.9);
\draw[- angle 90](1,-3.6)--(-0.6, -2.7);

\node[] at (3.6,-5.6) {$G_{a}$};


\draw[-][draw=black,thick] (12,0) -- (16.2,0);
\draw[-][draw=black,thick] (12,-3.6) -- (16.2,-3.6);
\draw[-][draw=black,thick] (12,0) -- (12,-3.6);
\draw[-][draw=black,thick] (16.2,0) -- (16.2,-3.6);
\draw[-][draw=black,thick] (12,-3.6) -- (9.2,-1.8);
\draw[-][draw=black,thick] (12,0) -- (9.2,-1.8);

\node[above] at (12,0) {\tiny{$v_{1}$}};
\node[above] at (16.2,0) {\tiny{$v_{2}$}};
\node[below] at (16.2,-3.6) {\tiny{$v_{3}$}};
\node[below] at (12,-3.6) {\tiny{$v_{4}$}};
\node[left] at (9.2,-1.8) {\tiny{$v_{5}$}};

\node[] at (14.4,-5.6) {$U(G_{a})$};

\end{tikzpicture}
\caption{$U(G_{a})$ is a proper circular-arc graph whereas $G_{a}$ is not a proper oriented circular-arc catch digraph}\label{counter}
\end{figure}

\noindent Note that $G_{a}$ is an oriented graph satisfying $U(G_{a})$ to be a proper circular-arc graph with respect to the circular-arc representation $[1,5]$, $[4,7]$, $[6,9]$, $[8,2]$, $[10,3]$ 
corresponding to the vertices $v_{1},v_{2},v_{3},v_{4},v_{5}$. But below we will show that $G_{a}$ is not proper oriented circular-arc catch digraph.

\vspace{0.4em}
\noindent On contrary let $G_{a}$ have a proper circular-arc catch digraph representation $\{(I_{v},p_{v})|v\in V\}$. As the $4$-cycle formed by the vertices $\{v_{1},v_{2},v_{3},v_{4}\}$ in $U(G_{a})$ is alternatively oriented in $G_{a}$, one can check that $v_{1}\prec v_{2}\prec v_{3}\prec v_{4}$  and $v_{1}\prec v_{4}\prec v_{3}\prec v_{2}$ are the only possible circular catch orderings of $V\setminus \{v_{5}\}$ so that the transformed matrix $B$ obtained from the augmented adjacency matrix $A(G_{a})$ satisfies all conditions of Theorem \ref{pcacd}. Now as $v_{1}v_{5},v_{4}v_{5}\in E$, when $v_{1}\prec v_{2}\prec v_{3}\prec v_{4}$ 
then $ v_{1}\prec v_{2}\prec v_{3}\prec v_{4}\prec v_{5}$ and when $v_{1}\prec v_{4}\prec v_{3}\prec v_{2}$ then $v_{1}\prec v_{5} \prec v_{4} \prec v_{3} \prec v_{2}$ becomes the only possible circular-catch ordering of $V$. But in each of these cases with respect to any permutation of the rows of $A(G_{a})$, the transformed matrix $B$ does not satisfy condition $2$ of  Theorem \ref{pcacd} as the circular stretch of $1$'s corresponding to row $v_{5}$ is properly contained in the circular stretch of $1$'s corresponding to row $v_{1}$. Thus we get the contradiction. 
\end{exmp}

\noindent In the following theorem we characterize a proper oriented circular-arc catch digraph with the help of Theorem \ref{cop1} and Theorem \ref{pcacd}.

\begin{thm}\label{4pt}
Let $G=(V,E)$ be an oriented graph. Then $G$ is a proper oriented circular-arc catch digraph if and only if there exists a circular vertex ordering $\prec_{\sigma}$ of $V$ such that for any $u\prec_{\sigma}v\prec_{\sigma}w\prec_{\sigma}x$ if $uw\in E$ then either $uv,vw\in E$ or $ux,xw\in E$.
\end{thm}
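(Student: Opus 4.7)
For the forward direction, I would begin with a proper circular-arc catch representation $\{(I_v, p_v) : v \in V\}$ and take $\sigma$ to be the clockwise ordering of the points $p_v$ on the circle. Given a 4-tuple $u <_\sigma v <_\sigma w <_\sigma x$ with $uw \in E$, the arc $I_u$ contains both $p_u$ and $p_w$, so it covers one of the two circular arcs joining them: either the clockwise arc $[p_u, p_w]$, which passes through $p_v$, or the clockwise arc $[p_w, p_u]$, which passes through $p_x$. In the first case $p_v \in I_u$ gives $uv \in E$, and to deduce $vw \in E$, suppose otherwise. Since $G$ is oriented and $uv \in E$, we have $vu \notin E$, so $p_u \notin I_v$; combined with $p_w \notin I_v$, this forces $I_v$ into the open arc $(p_u, p_w) \subsetneq I_u$, contradicting properness. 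The second case is symmetric and yields $ux, xw \in E$.

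For the backward direction, I would order both rows and columns of $A^*(G)$ by $\sigma$ and verify the three conditions of Theorem~\ref{pcacd}; combined with the assumption that $G$ is oriented, this gives that $G$ is an oriented proper circular-arc catch digraph. Circular ones along rows follows from the hypothesis: if $N^+[u]$ is not a circular interval, a second maximal block of ones in row $u$ produces flanking zeros at $a, c$ with $u <_\sigma a <_\sigma b <_\sigma c$ (where $ub \in E$), and the 4-tuple $(u, a, b, c)$ directly violates the hypothesis since $ua$ and $uc$ are both non-edges. The dual 4-tuple $(u_2, u_3, w, u_1)$, extracted from a 0-1-0 pattern in the column of $w$, rules out failure of circular ones along columns.

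For condition 2 of Theorem~\ref{pcacd}, the only configuration that fails is $[a_s, b_s] \subsetneq [a_r, b_r]$ with both endpoints strictly interior; since $G$ is oriented and $v_r v_s \in E$, the vertex $v_r$ lies in one of the extension arms $[a_r, a_s-1]$ or $[b_s+1, b_r]$. In the left-arm subcase the 4-tuple $(v_r, v_s, v_{b_r}, v_{a_r-1})$ violates the hypothesis because $v_r v_{b_r} \in E$ while $v_s v_{b_r}$ and $v_r v_{a_r-1}$ are non-edges; the right-arm subcase uses $(v_r, v_{a_r-1}, v_{a_r}, v_s)$. For condition 3, if a full row corresponds to a dominator $v_r$ while two nontrivial rows have disjoint out-neighborhoods $N^+[v_s], N^+[v_t]$ separated by two nonempty gaps, then orientation places $v_r$ in one of those gaps, and choosing $v_y$ in the other gap makes the 4-tuple $(v_r, v_s, v_y, v_t)$ (or its cyclic rotation) violate the hypothesis, since $v_r v_y \in E$ but $v_s v_y$ and $v_t v_y$ are both non-edges.

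The main obstacle is the case analysis in the last step: for each possible position of the ``outer'' or dominator vertex $v_r$ relative to the offending intervals, one must identify a 4-tuple in circular order that forces both disjuncts of the hypothesis to fail. Edge cases---such as $N^+[v_s]$ sharing exactly one endpoint with $N^+[v_r]$ in condition 2, or a single-vertex gap in condition 3---do not obstruct the argument, because in those scenarios the corresponding condition of Theorem~\ref{pcacd} automatically holds and no contradiction from the hypothesis is required.
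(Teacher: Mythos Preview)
Your proof is correct and the backward direction follows the paper's own route: order rows and columns of $A^{*}(G)$ by $\sigma$ and verify the three conditions of Theorem~\ref{pcacd}. Your case analysis there is essentially the paper's, phrased uniformly in circular-interval language rather than split into ``1st type / 2nd type'' rows.

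The forward direction, however, is genuinely different. The paper invokes the necessity direction of Theorem~\ref{pcacd} to get row and column orderings $r_\sigma,c_\sigma$ making $B$ satisfy the matrix conditions, then argues (somewhat informally, appealing to ``the proof of Theorem~\ref{pcacd}'') that orientedness forces $r_\sigma=c_\sigma$ up to rotation, and finally derives the 4-vertex condition from the matrix conditions. You instead work directly with the geometric representation: take $\sigma$ to be the clockwise order of the points $p_v$, observe that $I_u$ must cover one of the two arcs between $p_u$ and $p_w$, and use orientedness plus properness to force the second edge ($vw$ or $xw$) in each branch. This bypasses Theorem~\ref{pcacd} entirely for the forward direction and avoids the delicate step of reconciling the $a_i$-ordering of rows with the $p_i$-ordering of columns. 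The paper's approach has the virtue of exhibiting the 4-vertex condition as a direct shadow of the matrix conditions, but yours is shorter, self-contained, and makes the role of properness (the containment contradiction $I_v\subsetneq I_u$) completely transparent.
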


\begin{proof}
\textit{\textbf{Necessity:}} Let $G$ be a proper oriented circular-arc catch digraph with representation 
$\{(I_{v},p_{v})|v\in V\}$. Then from Theorem \ref{pcacd} there exists vertex orderings (say $\prec_{r}, \prec_{c}$) for rows and columns of the augmented adjacency matrix $A(G)$ so that $B=(b_{i,j})$ (the transformed matrix) satisfy all conditions of Theorem \ref{pcacd}. From the proof of Theorem \ref{pcacd} one can check $\prec_{r}$ and $\prec_{c}$ are the increasing order of $a_{i}$'s and $p_{i}$'s respectively. As $G$ is oriented from Lemma \ref{a} one can easily verify that $\prec_{r}=\prec_{c}=\prec_{\sigma}$ (\text{say}) (upto some cyclic permutation). Hence we can order the rows and columns of $B$ with the same vertex ordering $\prec_{\sigma}$ so that it still satisfy all the conditions and $b_{i,i}=1$ for all $i$.  

\vspace{0.2em}
\noindent Let $u\prec_{\sigma}v\prec_{\sigma}w\prec_{\sigma}x$ satisfy $uw\in E$. Suppose $u,v,w,x$ corresponds to $i,j,k,l$ rows (columns) in $B$. Then $uw\in E$ implies $b_{i,k}=1$. Again by circular $1$'s property (along rows) of the $i^{\text{th}}$ row of $B$ we have either $b_{i,j}=1$ or $b_{i,l}=1$ or both. Hence $uv\in E$ or $ux\in E$ or both. 

\vspace{0.2em}

\noindent Now when $uv\in E$ and $ux\notin E$, then $b_{j,i}=0$ as $G$ is oriented. Moreover, $u$ is not a full row as $b_{i,l}=0$. Hence if we assume on contrary that $vw\notin E$ then $b_{j,k}=0$, which contradict condition $2$ for rows $i,j$. Similarly if $ux\in E$ and $uv\notin E$, then $b_{l,i}=0$ and $x$ is not a full row. Therefore if $xw\notin E$ then condition $2$ gets contradicted for rows $i,l$. Again when $uv,ux\in E$ then if $vw,xw\notin E$ then $b_{j,k}=b_{l,k}=0$, which contradicts circular $1$'s property (along columns) for $k^{\text{th}}$ column in $B$ as $b_{i,k}=b_{k,k}=1$. Hence $vw\in E$ when $uv\in E$ and $xw\in E$ when $ux\in E$.

\vspace{0.37em}

\noindent \textit{\textbf{Sufficiency:}} Conversely, let $G=(V,E)$ be an oriented graph satisfying the given condition with respect to a circular vertex ordering $\prec_{\sigma}$. Now we arrange all the vertices in same order 
$\prec_{\sigma}$ along rows and columns of the augmented adjacency matrix $A(G)=(a_{i,j})$. To prove $G$ is proper circular-arc catch digraph we will show that $A(G)$ satisfies all conditions of Theorem \ref{pcacd}.

\vspace{0.7em}

\noindent \textbf{\textit{Circular $1$'s property along rows and columns is satisfied by $A(G)$.}}

\vspace{0.27em}
  
\noindent We assume on contrary that the $i^{\text{th}}$ row of $A(G)$ contains $(1,0,1,0)$ along columns $u,v,w,x$. As $a_{i,i}=1$ without loss of generality we can consider column $u$ or column $w$ as $i^{\text{th}}$ column. Now in $A(G)$ as the rows and columns are ordered in the same ordering $\prec_{\sigma}$ when row $u$ corresponds to the $i^{\text{th}}$ row then as $u\prec_{\sigma}v\prec_{\sigma}w\prec_{\sigma}x$ and $uw\in E$ from given condition $uv\in E$ or $ux\in E$, which is not true.
Similarly if row $w$ corresponds to the $i^{\text{th}}$ row then from the given condition $wu\in E$ in the circular ordering $w\prec_{\sigma} x \prec_{\sigma} u<_{\sigma} v$ imply $wx,wv\in E$. But this leads to contradiction.
Analogously, one can show $A(G)$ can not contain $(0,1,0,1)$ along any row.

\vspace{0.37em}

\noindent Again we assume on contrary that $j^{\text{th}}$ column of $A(G)$ contains $(0,1,0,1)$ along rows $u,v,w,x$. As $a_{j,j}=1$ without loss of generality we can consider row $v$ or row $x$ as $j^{\text{th}}$ row. Now when row $v$ corresponds to $j^{\text{th}}$ row then from our given condition in the circular ordering $x\prec_{\sigma} u \prec_{\sigma} v\prec_{\sigma} w$, $xv\in E$ imply $uv\in E$ or $wv\in E$. But this is not true. Similarly when row $x$ corresponds to $j^{\text{th}}$ row then from the given condition $vx\in E$ in the circular ordering $v\prec_{\sigma} w\prec_{\sigma} x\prec_{\sigma} u$ imply $wx,ux\in E$, which is again not true. Again if $A(G)$ contains $(1,0,1,0)$ in $j^{\text{th}}$ column along rows $u,v,w,x$ then one can find similar contradiction by considering row $u$ or $w$ as $j^{\text{th}}$ row of $A(G)$. 

\vspace{0.9em}
\noindent \textbf{\textit{Any two rows of $A(G)$ satisfies condition $2$ of Theorem \ref{pcacd}.}}
\vspace{0.39em}

\noindent We assume on contrary there exists a pair of rows $\{r,s\}$ in $A(G)$ for which $r^{\ast}-s^{\ast}$ is not a circular interval.
Let $r<s$ satisfying $s^{\ast}\subset r^{\ast}$ in $A(G)$ and $r$ is not a full row.

\vspace{0.27em}

\noindent If both of them are of $1$st type then as $a_{r,r}=a_{s,s}=1$ and $G$ is oriented we can always choose $u,v,w$ corresponding to the columns $r,s,r_{2}$
in such a way so that $a_{r,s}=a_{r,w}=1,a_{s,w}=0$. Again as $r$ is not a full row there must exist some vertex $x$ corresponding to the column $r_{2}+1$ or $r_{1}-1$ in $\prec_{\sigma}$ satisfying $u\prec_{\sigma} v\prec_{\sigma}w\prec_{\sigma}x$ such that $a_{r,x}=a_{s,x}=0$. But this contradicts our given condition as $uw\in E$ and $vw,ux\notin E$.

\vspace{0.27em}

\noindent Again if $r$ be a row of $2$nd type and $s$ be a row of $1$st type then either $r_{3}<s_{1}$ or $s_{2}<r_{4}$. We consider the case when $r_{3}<s_{1}$. If $r\geq r_{3}$ then $r<s_{1}$ as $G$ is oriented and $r<s$. Therefore we can choose $u,v,w,x$ corresponding to the columns $r,s,r_{4},r_{4}+1$ respectively so that $u\prec_{\sigma} v\prec_{\sigma}w\prec_{\sigma}x$ and $uw\in E$ but $ux,vw\notin E$. Again if $r\leq r_{4}$ then
as $r_{3}<s_{1}$ we can choose $u,v,w,x$ corresponding to the columns $r, r_{4}+1, r_{3}, s$ so that $uw\in E$ but $uv,xw\notin E$. Therefore contradiction arises in both of the cases.
Similar contradiction will follow when $s_{2}<r_{4}$. 

\vspace{0.27em}

\noindent Next when both $r,s$ are rows of $2$nd type, then $s_{4}<r_{4}$ and $r_{3}<s_{3}$. If $r\geq r_{3}$ then $a_{r,s}=1$ as $r<s$, which imply $a_{s,r}=0$ as $G$ is oriented. Hence we get $r<s_{3}$. We now consider columns $u,v,w,x$ 
corresponding to the columns $r,s,r_{4},r_{4}+1$. Therefore in the circular vertex ordering $u\prec_{\sigma} v\prec_{\sigma} w\prec_{\sigma}x$, $uw\in E$ but $vw,ux\notin E$. Again if $r\leq r_{4}$ then $s_{4}<r$ as before. In this case we choose columns $u,v,w,x$ corresponding to the columns $r, r_{4}+1,r_{3},s$ satisfying $u\prec_{\sigma}v \prec_{\sigma} w\prec_{\sigma}x$ so that $uw\in E$ but $uv,xw\notin E$. Thus contradictions arise in both cases. 

\vspace{0.3em}
\noindent Similar contradictions will occur when $s<r$.

\vspace{1em}

\noindent \textbf{\textit{Condition $3$ of Theorem \ref{pcacd} is satisfied by rows of $A(G)$.}}

\vspace{0.2em}

\noindent We assume on contrary there exists a triple $\{r,s,t\}$ for which $r^{\ast}-s^{\ast}-t^{\ast}$ is not a circular interval where $r$ is a full row and $s,t$ are two nontrivial rows of the matrix. 

\vspace{0.27em}

\noindent Let $s,t$ both be of $1$st type and $s_{2}+1<t_{1}$. Then $s<t$ in $A(G)$. If both $s,t$ occur on the same side of $r$  (i.e., $s<t<r$ or $r<s<t$) in the arrangement of rows and columns of $A(G)$, then we can consider $u,v,w,x$ corresponding to $r,s,s_{2}+1,t$ along rows and columns so that $u\prec_{\sigma} v\prec_{\sigma} w\prec_{\sigma}x$. But one can find contradiction as $uw\in E$ but $vw,xw\notin E$. Again if $s,t$ occurs opposite to $r$ (i.e., $s<r<t$) along both rows and columns of $A(G)$ then either $s_{1}\neq 1$ or $t_{2}\neq n$ as $r^{\ast}-s^{\ast}-t^{\ast}$ is not a circular interval as per our consideration. If $s_{1}\neq 1$ then we consider $u,v,w,x$ corresponding to the rows and columns of $r,t,s_{1}-1,s$ respectively. Again when $t_{2}\neq n$ then $u,v,w,x$ can be considered corresponding to the rows and columns of $r,t,t_{2}+1,s$. In both cases one can find similar contradiction. 

\vspace{0.27em}

\noindent Now we consider the case when $s$ is of $2$nd type and $t$ is of $1$st type. Without loss of generality we assume $s_{4}+1<t_{1}$ and $t_{2}+1<s_{3}$
as otherwise $r^{\ast}-s^{\ast}-t^{\ast}$ is a circular interval. Now as $G$ is oriented one can check in this case $s,t$ must occur opposite to $r$ in $A(G)$. Now when $s<r<t$ then  considering $u,v,w,x$ corresponding to $r,t,t_{2}+1,s$ and when $t<r<s$ then  considering $u,v,w,x$ corresponding to $r,s,s_{4}+1,t$ along both rows and columns of $A(G)$, one can find in the circular ordering $u\prec_{\sigma} v\prec_{\sigma} w\prec_{\sigma}x$, $uw\in E$ but $vw,xw\notin E$.   
\end{proof}

\begin{figure}

\begin{tikzpicture}[scale=0.6]

\draw[-][draw=black,thick] (1,0) -- (5.2,0);
\draw[-][draw=black,thick] (1,-3.6) -- (5.2,-3.6);
\draw[-][draw=black,thick] (1,0) -- (1,-3.6);
\draw[-][draw=black,thick] (5.2,0) -- (5.2,-3.6);

\draw[-][draw=black,thick] (1,-3.6) -- (-2.2,-1.8);
\draw[-][draw=black,thick] (1,0) -- (-2.2,-1.8);

\draw[-][draw=black,thick] (5.2,0) -- (1,-3.6);
\node[above] at (1,0) {\tiny{$v_{1}$}};
\node[above] at (5.2,0) {\tiny{$v_{2}$}};
\node[below] at (5.2,-3.6) {\tiny{$v_{3}$}};
\node[below] at (1,-3.6) {\tiny{$v_{4}$}};
\node[left] at (-2.2,-1.8) {\tiny{$v_{5}$}};

\draw [fill=black] (1,0) circle [radius=0.1];
\draw [fill=black] (5.2,0) circle [radius=0.1];
\draw [fill=black] (1,-3.6) circle [radius=0.1];
\draw [fill=black] (5.2,-3.6) circle [radius=0.1];
\draw [fill=black] (-2.2,-1.8) circle [radius=0.1];

\draw[- angle 90](1,0)--(2.6,0);
\draw[- angle 90](1,-3.6)--(1,-1.8);
\draw[- angle 90](1,-3.6)--(3.1,-1.8);
\draw[- angle 90](5.2,0)--(5.2,-1.8);
\draw[- angle 90](5.2,-3.6)--(5.2,-1.8);
\draw[- angle 90](5.2,-3.6)--(2.6,-3.6);
\draw[- angle 90](-2.2,-1.8)--(-0.6,-.9);
\draw[- angle 90](1,-3.6)--(-0.6, -2.7);

\node[] at (3.6,-5.6) {$G_{1}$};


\draw[-][draw=black,thick] (7.6,0) -- (11.6,0);
\draw[-][draw=black,thick] (7.6,0) -- (9.6,-1.8);
\draw[-][draw=black,thick] (11.6,0) -- (9.6,-1.8);
\draw[-][draw=black,thick] (9.6,-1.8) -- (9.6,-3.6);

\draw [fill=black] (7.6,0) circle [radius=0.1];
\draw [fill=black] (11.6,0) circle [radius=0.1];
\draw [fill=black] (9.6,-1.8) circle [radius=0.1];
\draw [fill=black] (9.6,-3.6) circle [radius=0.1];

\node[above] at (7.6,0) {\tiny{$v_{1}$}};
\node[above] at (11.6,0) {\tiny{$v_{2}$}};
\node[right] at (9.6,-1.8) {\tiny{$v_{3}$}};
\node[below] at (9.6,-3.6) {\tiny{$v_{4}$}};

\draw[- angle 90](7.6,0)--(9.6,0);
\draw[- angle 90](11.6,0)--(10.6,-0.9);
\draw[- angle 90] (9.6,-1.8)--(8.6,-0.9);
\draw[- angle 90] (9.6,-1.8)--(9.6,-2.7);

\node [] at (9.6,-5.6){$G_{2}$};

\draw[-][draw=black,thick] (14.6,0) -- (19.6,0);
\draw[-][draw=black,thick] (14.6,-3.6) -- (19.6,-3.6);
\draw[-][draw=black,thick] (19.6,0) -- (19.6,-3.6);
\draw[-][draw=black,thick] (14.6,0) -- (14.6,-3.6);

\draw [fill=black] (14.6,0) circle [radius=0.1];
\draw [fill=black] (19.6,0) circle [radius=0.1];
\draw [fill=black] (14.6,-3.6) circle [radius=0.1];
\draw [fill=black] (19.6,-3.6) circle [radius=0.1];

\node[above] at (14.6,0) {\tiny{$v_{1}$}};
\node[above] at (19.6,0) {\tiny{$v_{2}$}};
\node[below] at (19.6,-3.6) {\tiny{$v_{3}$}};
\node[below] at (14.6,-3.6) {\tiny{$v_{4}$}};

\draw[- angle 90](14.6,0)--(17.1,0);
\draw[- angle 90](19.6,0)--(19.6,-1.8);
\draw[- angle 90](19.6,-3.6)--(17.1,-3.6);
\draw[- angle 90](14.6,-3.6)--(14.6,-1.8);

\node[] at (17.6,-5.6) {$G_{3}$};

\end{tikzpicture}
\end{figure}
\begin{figure}\centering
\begin{tikzpicture}[scale=.6]
\draw[-][draw=black,thick] (1,0) -- (20,0);
\draw[-][draw=black,thick] (1,0) -- (1,16);
\draw[-][draw=black,thick] (1,16) -- (20,16);
\draw[-][draw=black,thick] (20,0) -- (20,16);
\node [right] at (6,15) {circular-arc catch digraph};
\draw[-][draw=black,thick] (2,1) -- (12,1);
\draw[-][draw=black,thick] (2,1) -- (2,10);
\draw[-][draw=black,thick] (2,10) -- (12,10);
\draw[-][draw=black,thick] (12,1) -- (12,10);
\node [right] at (3.1,7.6) {oriented};
\node [right] at (2.5,6.9) {circular-arc};
\node [right] at (2.5,6.2) {catch digraph};
\draw[-][draw=black,thick] (8.6,1.32) -- (19.6,1.32);
\draw[-][draw=black,thick] (8.6,14.2) -- (19.6,14.2);
\draw[-][draw=black,thick] (8.6,1.32) -- (8.6,14.2);
\draw[-][draw=black,thick] (19.6,1.32) -- (19.6,14.2);
\node [right] at (9.3,11.6) {proper circular-arc catch digraph};
\node [right] at (8.9,8.3) {proper};
\node [right] at (8.9,7.7) {oriented};
\node [right] at (8.9,7.1) {circular-};
\node [right] at (8.9,6.4) {arc};
\node [right] at (8.9,5.7) {catch};
\node [right] at (8.9,4.9) {digraph};
\node [right] at (3,12) {$G_{1}$};
\node[right] at (3.2,3.66) {$G_{2}$};
\node [right] at (9.6,3.66) {$G_{3}$};
\node [right] at (15.2,8) {$G_{4}$};


\draw[-][draw=black,thick] (21.6,1.6) -- (26.6,1.6);
\draw[-][draw=black,thick] (21.6,4.6) -- (26.6,4.6);
\draw[-][draw=black,thick] (26.6,1.6) -- (26.6,4.6);
\draw[-][draw=black,thick] (21.6,1.6) -- (21.6,4.6);

\draw [fill=black] (21.6,1.6) circle [radius=0.1];
\draw [fill=black] (26.6,1.6) circle [radius=0.1];
\draw [fill=black] (21.6,4.6) circle [radius=0.1];
\draw [fill=black] (26.6,4.6) circle [radius=0.1];

\node[above] at (21.6,4.6) {\tiny{$v_{1}$}};
\node[above] at (26.6,4.6) {\tiny{$v_{2}$}};
\node[below] at (26.6,1.6) {\tiny{$v_{3}$}};
\node[below] at (21.6,1.6) {\tiny{$v_{4}$}};

\draw[- angle 90] (21.6,4.6)--(24.1,4.6);

\draw[- angle 90] (26.6,4.6)--(26.6,3.1);

\draw[- angle 90] (26.6,1.6)--(26.6,3.1);

\draw[- angle 90] (26.6,1.6)--(24.1,1.6);
\draw[- angle 90] (21.6,1.6)--(21.6,3.1);

\node[] at (24.6,0) {$G_{4}$};

\end{tikzpicture}
\caption{Relations between some subclasses of circular-arc catch digraph}\label{cacd1}
\end{figure}

\section{Conclusion}

\noindent In this article we consider two classes of circular-arc catch digraphs, namely, oriented circular-arc catch digraph and proper circular-arc catch digraph. We obtain characterization of an oriented circular-arc catch graph, when it is a tournament. Also we characterize a proper circular-arc catch digraph by defining a notion monotone circular ordering of the vertices of it.
Further we characterize a proper oriented circular-arc catch digraph by providing a specific circular vertex ordering of the vertex set. In Figure \ref{cacd1} we show the relationships between these digraphs whose proofs follow from the characterization theorems. Several combinatorial problems remain open for these digraphs. Finding the recognition algorithm for proper circular-arc catch digraphs or to find the complete list of forbidden digraphs of an oriented circular-arc catch digraph are very challenging among them.

{\small


\begin{thebibliography}{00}

\bibitem{Bang}
J.~Bang-Jensen, G.~Z. Gutin, Digraphs, Springer Monographs in Mathematics, 2008.

\bibitem{BDGS}
A.~Basu, S.~Das, S.~Ghosh and M.~Sen, \emph{Circular-arc bigraphs and its subclasses}, J. Graph Theory \textbf{73} (2013), 361--376.


\bibitem{Con}
G. Confessore, P. Dell’olmo, S. Giordani, An approximation result for a periodic allocation problem, \emph{Discrete Applied Mathematics} \textbf{112} (2001) 53--72.

\bibitem{DHH}
X.~Deng, P.~Hell and J.~Huang, \emph{Linear time representation algorithms for proper circular arc graphs and proper interval graphs}, SIAM J. Comput. \textbf{25} (1996) 390--403.


\bibitem{G}
M.~C.~Golumbic, Algorithmic graph theory and perfect graphs, Annals of Disc. Math., \textbf{57}, Elsevier Sci., USA, 2004.

\bibitem{GT}
M.~C.~Golumbic and A.~Trenk, Tolerence Graphs, Cambridge studies in advanced mathematics, Cambridge University Press, 2004.


\bibitem{HBH}
P. Hell, J. Bang-Jensen, J. Huang, \emph{Local tournaments and proper circular-arc graphs}, Lecture Notes in Computer Science \textbf{450} (1990) 101–109.

\bibitem{Hubert}
L. Hubert, Some applications of graph theory and related non symmetric techniques to problems of approximate seriation: The case of symmetric
proximity measures, \emph{British Journal of Mathematical and Statistical Psychology} \textbf{27} (1974) 133--153.

\bibitem{Lekker}
 C.G. Lekkerkerker, J.C. Boland, "Representation of a finite graph by a set of intervals on the real line", \emph{Fund. Math.}, \textbf{51} (1962),: 45–64.
 
\bibitem{Lin}
Min Chih Lin, Jayme L. Szwarcfiter, \emph{Characterizations and recognition of circular-arc graphs and subclasses: A survey}, Discrete Mathematics, \textbf{309} (2009) 5618–5635
 
\bibitem{Maehara} 
H.~Maehara, A Digraph Represented by a Family of Boxes or Spheres, \emph{J. Graph Theory}, \textbf{8} (1984), 431--439. 


\bibitem{Mc}
R.~M.~McConnell, \emph{Linear time recognition of circular-arc graphs}, Algorithmica, \textbf{37} (2003), 93-147.

 
\bibitem {Nussbaum}
Y. Nussbaum, \emph{Recognition of circular-arc graphs and some subclasses}, M. Sc. Thesis, Tel-Aviv University, School of Computer Science, Tel-Aviv, Israel,
2007.

\bibitem{PG}
S. Paul, S. Ghosh, On some subclasses of interval catch digraphs, \emph{Electronic Journal of Graph Theory and Applications}, \textbf{10 (1)}, 2022, 157--171.

\bibitem{PG1}
S. Paul, S. Ghosh, On central max-point tolerance graphs and some subclasses of interval catch digraphs, arXiv:1712.00008.

\bibitem{Prisner} 
E.~Prisner, A Characterization of Interval Catch Digraphs, \emph{Discrete Mathematics} \textbf{73} (1989), 285-289.

\bibitem{Prisner1}
E. Prisner, \emph{Algorithms for interval catch digraphs}, Discrete Applied Mathematics \textbf{51} (1994), 147-157.

\bibitem{Roberts}
F. S. Roberts, \emph{ Discrete mathematical models}, Prentice-Hall, Upper Saddle River, NJ, 1976.

\bibitem{Robert2}
F.S. Roberts, Graph Theory and its Applications to Problems of Society, \emph{ SIAM Publishing}, Philadelphia, 1978.

\bibitem{Stahl}
F. Stahl, Circular genetic maps, \emph{Journal Cell. Physiol.} \textbf{70} (1967) 1--12.

\bibitem{Stou}
K. Stouffers, Scheduling of traffic lights—a new approach, \emph{Transportation Res.} \textbf{2} (1968) 199--234.

\bibitem{Ste}
S. Stefanakos, T. Erlebach, Routing in all-optical ring network revisited, in: \emph{Proceedings of the 9th IEEE Symposium on Computers and Communications},
2004, pp. 288–293.



\bibitem{Skrien}
D. Skrien, \emph {A relationship between triangulated graphs, comparability graphs, proper interval graphs, proper circular-arc graphs and nested interval graphs}, Journal of Graph Theory \textbf{6} (1982), 309–316.


\bibitem {Tucker}
A. Tucker, \emph{Matrix characterizations of circular-arc graphs}, Pacific J. Math., (1971), \textbf{39}: 535–545. 

\bibitem{Tuck}
A. Tucker, A structure theorem for the consecutive 1’s property, \emph{Journal of Combinatorial Theory} \textbf{12} (1972) 153--162.


\bibitem{Tuc}
A. Tucker, \emph{Two characterizations of proper circular-arc graphs}, PhD thesis, Stanford University, 1969.

\bibitem{Tu}
A. Tucker, Circular-arc graphs: New uses and a new algorithm, in: Theory and Application of Graphs, \emph{Lecture Notes in Mathematics} \textbf{642} (1978)
580--589.

\bibitem{T}
A. Tucker, An efficient test for circular-arc graphs, \emph{SIAM Journal on Computing} \textbf{9} (1980) 1--24.


\end{thebibliography}
\end{document}